\def\cA{\mathcal A}
\def\cC{\mathcal C}
\def\cF{\mathcal F}
\def\cM{\mathcal M}
\def\cP{\mathcal P}
\def\cU{\mathcal U}
\def\cV{\mathcal V}
\def\cW{\mathcal W}
\def\cX{\mathcal X}
\def\N{\mathop{\mathbb N\kern 0pt}\nolimits}
\def\Z{\mathop{\mathbb Z\kern 0pt}\nolimits}
\def\Q{\mathop{\mathbb Q\kern 0pt}\nolimits}
\def\R{\mathop{\mathbb R\kern 0pt}\nolimits}
\def\T{\mathop{\mathbb T\kern 0pt}\nolimits}
\def\SS{\mathop{\mathbb S\kern 0pt}\nolimits}
\def\ds{\displaystyle}
\def\f{\frac}
\def\supp{\mathop{\rm supp}\nolimits}
\def\p{\partial}
\def\ve{\varepsilon}
\def\vp{\varphi}
\def\supp{\operatorname{supp}}
\def\ls{\lesssim}
\newcommand{\w}[1]{\langle {#1} \rangle}
\theoremstyle{plain}
\newtheorem{theorem}{Theorem}[section]
\newtheorem{lemma}[theorem]{Lemma}
\newtheorem{corollary}[theorem]{Corollary}
\theoremstyle{definition}
\newtheorem{remark}{Remark}[section]
\numberwithin{equation}{section}
\title{Global existence and scattering of small data smooth solutions to a class of quasilinear
wave systems on $\mathbb{R}^2\times\mathbb{T}$}
\author{Fei Hou$^{1,*}$ \quad Fei Tao$^{2,*}$ \quad Huicheng Yin$^{3,}$
    \footnote{Hou Fei(\texttt{fhou$@$nju.edu.cn}),  Tao Fei(\texttt{feitao$@$njupt.edu.cn}) and  Yin Huicheng(\texttt{huicheng$@$nju.edu.cn}, \texttt{05407$@$njnu.edu.cn}) are supported by the NSFC (No.~12331007, No.~12101304). In addition, Tao Fei is supported by the Natural Science Research Start-up Foundation of Recruiting Talents of Nanjing University of Posts and Telecommunications (Grant No. NY223200);
     Yin Huicheng is supported by the National key research and development program of China (No.2020YFA0713803).}\\
    [12pt]{\small 1. Department of Mathematics, Nanjing University, Nanjing, 210093, China}\\
    {\small 2. School of Science, Nanjing University of Posts and Telecommunications,}\\
    {\small Nanjing, 210023, China}\\
    {\small 3. School of Mathematical Sciences and Mathematical Institute,}\\
    {\small Nanjing Normal University, Nanjing, 210023, China}}
\begin{document}

\date{}
\maketitle
\thispagestyle{empty}

\begin{abstract}
In this paper, we are concerned with the global existence and scattering of small data smooth solutions to a class of
quasilinear wave systems on the product space $\mathbb{R}^2\times\mathbb{T}$. These quasilinear wave systems include
3D irrotational potential flow equation of Chaplygin gases,
3D relativistic membrane equation, some 3D quasilinear wave equations which come
from the corresponding Lagrangian functionals as perturbations of the Lagrangian densities of linear waves,
and nonlinear wave maps system. Through looking for some suitable transformations of unknown functions, 
the nonlinear wave system can
be reduced into a more tractable form. Subsequently, by applying the vector-field method together with the ghost weight
technique as well as deriving some kinds of weighted $L^\infty-L^\infty$ and $L^\infty-L^2$ estimates of solution $w$  to the 2D 
linear wave equation $\Box w=f(t,x)$, the global existence and
scattering of small data solutions are established.

\vskip 0.2 true cm

\noindent
\textbf{Keywords.} Global existence, scattering, quasilinear wave system, $L^\infty-L^\infty$ estimate, 

\qquad \quad $L^\infty-L^2$ estimate, ghost weight

\vskip 0.2 true cm
\noindent
\textbf{2020 Mathematical Subject Classification.}  35L05, 35L15, 35L70
\end{abstract}

\vskip 0.2 true cm

\addtocontents{toc}{\protect\thispagestyle{empty}}
\tableofcontents

\section{Background and main results}
In this paper, we study the Cauchy problem of the following 3D quasilinear wave system on $\R^2\times\T$
\begin{equation}\label{QWE}
\left\{
\begin{aligned}
&\Box u^i=F^i(u,\p u, \p^2u):=\sum_{j,k=1}^m\sum_{|a|+|b|\le1}C^{ab}_{ijk}Q_0(\p^au^j,\p^bu^k)
+\sum_{j,k,l=1}^mC_{ijkl}Q_0(u^j,u^k)u^l\\
&\qquad\quad+\sum_{j,k,l=1}^m\sum_{\alpha,\beta,\mu,\nu=0}^3Q_{ijkl}^{\alpha\beta\mu\nu}\p^2_{\alpha\beta}u^j\p_{\mu}u^k\p_{\nu}u^l,\qquad i=1,\cdots,m,\\
&(u,\p_tu)(0,x,y)=(u_{(0)},u_{(1)})(x,y),
\end{aligned}
\right.
\end{equation}
where $u=(u^1, ..., u^m)$, $(t,x,y)\in [0,+\infty)\times\R^2\times\T$, $x=(x_1,x_2)$, $\ds\Box:=\Box_{t,x,y}=\p_t^2-\Delta_x-\p_y^2$, $\Delta_x=\p_1^2+\p_2^2$, $\p=(\p_0,\p_1,\p_2,\p_3)=(\p_t,\p_{x_1},\p_{x_2},\p_y)$, $\ds Q_0(f,g)=\p_tf\p_tg-\sum_{j=1}^3\p_jf\p_jg$, $\p^a=\p_0^{a_1}\p_1^{a_2}\p_2^{a_3}\p_y^{a_4}$ with multi-index $a\in\N_0^4$,
and $u_{(r)}=(u_{(r)}^1, ..., u_{(r)}^m)$ for $r=0,1$.
In addition, $(u_{(0)},u_{(1)})$ is $2\pi$-periodic in $y$, namely,
$(u_{(0)},u_{(1)})(x,y+2\pi)=(u_{(0)},u_{(1)})(x,y)$. Note that
the first term on the right hand side of the first line in \eqref{QWE} also admits such a form
\begin{equation}\label{NL:rewrite}
\sum_{|a|+|b|\le1}C^{ab}_{ijk}Q_0(\p^au^j,\p^bu^k)
=\sum_{\alpha,\beta=0}^3Q_{ijk}^{\alpha\beta}\p_{\alpha}u^j\p_{\beta}u^k
+\sum_{\alpha,\beta,\mu=0}^3Q_{ijk}^{\alpha\beta\mu}\p^2_{\alpha\beta}u^j\p_{\mu}u^k.
\end{equation}
In addition, the following symmetric conditions are imposed for $i,j,k,l=1,\cdots,m$,
\begin{equation}\label{sym:condition}
Q_{ijkl}^{\alpha\beta\mu\nu}=Q_{ijkl}^{\beta\alpha\mu\nu}=Q_{jikl}^{\alpha\beta\mu\nu},
\qquad Q_{ijk}^{\alpha\beta\mu}=Q_{ijk}^{\beta\alpha\mu}=Q_{jik}^{\alpha\beta\mu}.
\end{equation}

In order to obtain the global existence of small data solution $u$ of \eqref{QWE},
as in \cite{HTY24}, the related partial null condition is required  for the cubic nonlinearity in
\eqref{QWE}:

{\it For any $(\xi_0,\xi_1,\xi_2)\in\{\pm1\}\times\SS$ and $i,j,k,l=1,\cdots,m$, it holds that
\begin{equation}\label{part:null:condtion}
\sum_{\alpha,\beta,\mu,\nu=0}^2Q_{ijkl}^{\alpha\beta\mu\nu}\xi_{\alpha}\xi_{\beta}\xi_{\mu}\xi_{\nu}\equiv0.
\end{equation}}

On the other hand, with respect to \eqref{NL:rewrite}, it is easy to check that for $i,j,k,l=1,\cdots,m$ and $(\xi_0,\xi_1,\xi_2)\in\{\pm1\}\times\SS$,
\begin{equation}\label{NC:rewrite}
\sum_{\alpha,\beta=0}^2Q_{ijk}^{\alpha\beta}\xi_{\alpha}\xi_{\beta}\equiv0\quad\text{and}\quad
\sum_{\alpha,\beta,\mu=0}^2Q_{ijk}^{\alpha\beta\mu}\xi_{\alpha}\xi_{\beta}\xi_{\mu}\equiv0.
\end{equation}

There are a lot of physical or geometric models of the second order quasilinear wave equations
or systems such that their nonlinearities admit the forms in \eqref{QWE}.
For examples, these models include:

\vskip 0.2 true cm

(1) {\bf 3D irrotational potential flow equation of Chaplygin gases}

\vskip 0.1 true cm

The compressible isentropic Euler equations on $\R^2\times\T$ are
\begin{equation}\label{Euler}
\left\{
\begin{aligned}
&\p_t\rho+div(\rho v)=0,\\
&\p_t(\rho v)+div(\rho v \otimes v)+\nabla p=0,\\
\end{aligned}
\right.
\end{equation}
where $(t,x,y)\in [0,+\infty)\times\mathbb{R}^2\times\Bbb T$, $\nabla=(\p_1,\p_2, \p_3)=(\p_{x_1},\p_{x_2},\p_y)$, $v=(v_1,v_2,v_3)$,
$\rho$, $p$ stand for the velocity, density, pressure, respectively.
The state equation of Chaplygin gases (see \cite{CF}) is
\begin{equation}\label{pressure2}
p(\rho)=P_0-\frac{B}{\rho},
\end{equation}
where $P_0$ and $B$ are positive constants.
Suppose that \eqref{Euler} admits the following irrotational smooth initial data with $2\pi$-period on $y$:
\begin{equation}\label{irrot:condition}
\begin{split}
&(\rho,v)(0,x,y)=(\bar\rho+\rho_0(x,y),v_1^0(x,y),v_2^0(x,y),v_3^0(x,y)),\\
&\p_iv_j^0=\p_jv_i^0,\quad 1\le i<j\le3,
\end{split}
\end{equation}
where $\bar\rho+\rho_0(x,y)>0$, and $\bar\rho$ is
a positive constant which can be normalized so that the sound speed $c(\bar\rho)=\sqrt{p'(\bar\rho)}=1$. Then by the irrotationality
of \eqref{Euler}, one can
introduce a potential function $\phi$ such that $v=\nabla\phi$. Meanwhile,
the Bernoulli's law $\p_t\phi+\frac12|\nabla\phi|^2+h(\rho)=0$
holds with the enthalpy $h(\rho)$ satisfying $h'(\rho)=\frac{p'(\rho)}\rho$ and $h(\bar\rho)=0$. Therefore, for smooth
irrotational flows, $\phi$ fulfills the following second order quasilinear wave equation
\begin{equation}\label{potentia1}
\Box\phi=2\p_t\phi\Delta\phi-2\sum_{k=1}^3\p_k\phi\p_{tk}^2\phi
+|\nabla\phi|^2\Delta\phi-\sum_{i,j=1}^3\p_i\phi\p_j\phi\p_{ij}^2\phi,
\quad\Delta=\p_1^2+\p_2^2+\p_3^2,
\end{equation}
with the initial data $(\phi_{(0)}, \phi_{(1)})$ as
\begin{equation*}
\begin{split}
&\phi_{(0)}(x_1,x_2,y):=\int_{-\infty}^{x_1}v_1^0(s,x_2,y)ds,\\
&\phi_{(1)}=-\frac12(|v_1^0|^2+|v_2^0|^2+|v_3^0|^2)-h(\bar\rho+\rho_0).
\end{split}
\end{equation*}
One easily checks that the quadratic nonlinearity in \eqref{potentia1} can be written as
\begin{equation}\label{Tao;potentia1ofQ0}
2\p_t\phi\Delta\phi-2\sum_{k=1}^3\p_k\phi\p_{tk}^2\phi=2Q_0(\p_t\phi,\phi)-2\p_t\phi\Box\phi.
\end{equation}
By substituting \eqref{Tao;potentia1ofQ0} into the equation \eqref{potentia1} and replacing $\Box \phi$ with \eqref{potentia1}, 
we can transform the equation \eqref{potentia1} into the following form:
\begin{equation}\label{potentia1-Final}
\begin{split}
\Box\phi&=2Q_0(\p_t\phi,\phi)-4(\p_t\phi)^2\Delta\phi+4\sum_{k=1}^3\p_t\phi\p_k\phi\p_{tk}^2\phi
+|\nabla\phi|^2\Delta\phi-\sum_{i,j=1}^3\p_i\phi\p_j\phi\p_{ij}^2\phi\\
&\quad-2\p_t\phi\Delta\phi|\nabla\phi|^2+2\sum_{i,j=1}^3\p_t\phi\p_i\phi\p_j\phi\p_{ij}^2\phi.
\end{split}
\end{equation}
It should be emphasized that the quartic term $2\sum_{i,j=1}^3\p_t\phi\p_i\phi\p_j\phi\p_{ij}^2\phi$
in the nonlinearity of \eqref{potentia1-Final} does not cause any additional difficulties. 
In addition, the cubic nonlinearities of \eqref{potentia1-Final} satisfy the partial null condition \eqref{part:null:condtion}.
This means that the 3D irrotational potential flow equation \eqref{potentia1} is a special case of the nonlinear wave
system in problem \eqref{QWE}.

\vskip 0.2 true cm
(2) {\bf 3D relativistic membrane equation}
\begin{equation}\label{HCC-X}
\p_t\bigg(\ds\f{\p_t\phi}{\sqrt{1-(\p_t\phi)^2+|\nabla\phi|^2}}\bigg)
-div\bigg(\ds\f{\nabla\phi}{\sqrt{1-(\p_t\phi)^2+|\nabla\phi|^2}}\bigg)=0,
\end{equation}
which corresponds to the Euler-Lagrange equation of the area functional
$\int_{\Bbb R\times\Bbb R^2}\int_{\Bbb T}\sqrt{1+|\nabla \phi|^2-(\p_t\phi)^2}\\dtdxdy$
for the embedding of $(t,x,y)\to (t,x,y,\phi(t,x,y))$ in the Minkowski space-time.
Here $x=(x_1,x_2)$, $\nabla=(\p_{x_1}, \p_{x_2}, \p_{y})$ and $\phi$ is
$2\pi$-periodic with respect to the variable $y$.

It is easy to know that \eqref{HCC-X}
is equivalent to the following nonlinear equation for the $C^2$ solution $\phi$
\begin{equation}\label{HCC-Y}
\Box\phi=(|\p_t\phi|^2-|\nabla\phi|^2)(\p_t^2\phi-\Delta\phi)
-(\p_t\phi)^2\p_t^2\phi+2\p_t\phi\nabla\phi\cdot\p_t\nabla\phi-\nabla\phi\cdot\nabla(|\nabla\phi|^2).
\end{equation}
Then \eqref{HCC-X} is a special case of
the nonlinear system in problem \eqref{QWE} since the related cubic nonlinearity in \eqref{HCC-Y} satisfies
the partial null condition  \eqref{part:null:condtion}.

\vskip 0.2 true cm
(3) {\bf Some 3D quasilinear wave equations which come from the Lagrangian functionals as perturbations of the
Lagrangian densities of linear waves}

\vskip 0.1 true cm
\begin{equation}\label{Son-00}
\Box\phi=-\p_t\bigg(\big(-\ds\f12(\p_t\phi)^2+\f12|\nabla\phi|^2\big)^{k}\p_t\phi\bigg)
+div\bigg(\big(-\ds\f12(\p_t\phi)^2+\f12|\nabla\phi|^2\big)^{k}\nabla\phi\bigg), \quad k\in\Bbb N,
\end{equation}
which is the Euler-Lagrangian equation of
$L(\phi)=-\ds\f12(\p_t\phi)^2+\f12|\nabla\phi|^2
+\f{1}{k+1}\bigg(-\ds\f12(\p_t\phi)^2+\f12|\nabla\phi|^2\bigg)^{k+1}$
with $\nabla=(\p_{x_1}, \p_{x_2}, \p_{y})$.
Note that $L(\phi)$ can be thought as a perturbation of the Lagrangian density of linear wave.

In addition,
\begin{equation*}\label{Son-1-0}
(1+(\p_t\phi)^{2k+1})\p_t^2\phi-\Delta\phi+\p_t\bigg(\big(-\ds\f12(\p_t\phi)^2+\f12|\nabla\phi|^2\big)^{k}\p_t\phi\bigg)
-div\bigg(\big(-\ds\f12(\p_t\phi)^2+\f12|\nabla\phi|^2\big)^{k}\nabla\phi\bigg)=0,
\end{equation*}
namely,
\begin{equation}\label{Son-1-0}
\Box\phi=-\p_t\bigg(\big(-\ds\f12(\p_t\phi)^2+\f12|\nabla\phi|^2\big)^{k}\p_t\phi\bigg)
+div\bigg(\big(-\ds\f12(\p_t\phi)^2+\f12|\nabla\phi|^2\big)^{k}\nabla\phi\bigg)-(\p_t\phi)^{2k+1}\p_t^2\phi,
\end{equation}
which corresponds to the Euler-Lagrangian equation of the functional
\begin{equation*}\label{Son-0}
L(\phi)=-\ds\f12(\p_t\phi)^2+\f12|\nabla\phi|^2
+\f{1}{k+1}\bigg(-\ds\f12(\p_t\phi)^2+\f12|\nabla\phi|^2\bigg)^{k+1}-\f{(\p_t\phi)^{2k+3}}{(2k+3)(2k+2)}\quad
(k\in\Bbb N).
\end{equation*}
We see  that both \eqref{Son-00} and \eqref{Son-1-0} are the special cases of
the nonlinear wave system in problem \eqref{QWE}.

\vskip 0.2 true cm
(4) {\bf 3D wave maps system}

\vskip 0.1 true cm
Let ($\mathcal{M}$, $g$) be a $m$-dimensional compact Riemannian manifold without
boundary. A wave map is a continuous function from the Minkowski space $\R^{1+3}$ into $\mathcal{M}$:
\begin{equation*}
\phi=(\phi^1, ..., \phi^m): \R\times\R^3\longrightarrow\mathcal{M},
\end{equation*}
which corresponds to a critical point of the following functional
\begin{equation*}
F(\phi)=\int_{\R^{1+3}}\langle\eta^{\alpha\beta}\p_\beta\phi,\p_\alpha\phi\rangle_{g}dtdx.
\end{equation*}
Here $\eta$ is the standard Minkowski metric on $\R^{1+3}$, which is represented by the matrix $\mathrm{diag}(-1,1,1,1)$ in
rectangular coordinates. When the local coordinates on $\mathcal{M}$ are introduced, the equations of $\phi^i~(1\leq i\leq m)$ can be written as
\begin{equation}\label{Tao;wavemap}
\Box\phi^i=\sum_{j,k=1}^3\Gamma^i_{jk}(\phi)\Big(\p_t\phi^j\p_t\phi^k-\sum_{l=1}^3\p_{x_l}\phi^j\p_{x_l}\phi^k\Big),~~~1\leq i\leq m,
\end{equation}
where $\Gamma^i_{jk}(\phi)$'s are the Christoffel symbols of the metric $g$. For more detailed derivation of the wave maps system \eqref{Tao;wavemap}, one can see  2.1.2 of Chapter 2 in \cite{Shatah98}. When we consider the small data solution
of \eqref{Tao;wavemap}, by carrying out the Taylor expansion on the
function $\Gamma^i_{jk}(\phi)$ under Riemann normal coordinates, the long time existence of \eqref{Tao;wavemap} can be
attributed to the following system of semilinear wave equations
\begin{equation}\label{Tao;wavemap22}
\Box\phi^i=\sum_{j,k=1}^3\sum_{l=1}^3C^i_{jkl}Q_0(\phi^j,\phi^k)\phi^l,~~~1\leq i\leq m.
\end{equation}
It is easy to know that the 3D wave maps system \eqref{Tao;wavemap} or \eqref{Tao;wavemap22} is a special case of
the nonlinear wave system in problem \eqref{QWE} when $\phi(t,x_1,x_2,y)$ is $2\pi$-periodic with respect to $y$.

We now return to the problem \eqref{QWE}. Set
\begin{equation}\label{initial:data}
\begin{split}
\ve&:=\sum_{i+j\le2N+1}\|\w{x}^{4+i+j}\nabla_x^i\p_y^ju_{(0)}\|_{L^2_{x,y}}
+\sum_{i+j\le2N}\|\w{x}^{4+i+j}\nabla_x^i\p_y^ju_{(1)}\|_{L^2_{x,y}},
\end{split}
\end{equation}
where $N\ge 17$, $\nabla_x=(\p_1,\p_2)$, $\|\cdot\|_{L^2_{x,y}}:=\|\cdot\|_{L^2(\R^2\times\T)}$ and $\w{x}:=\sqrt{1+|x|^2}$.

\vskip 0.3 true cm

The main results of this paper are:
\begin{theorem}\label{thm1}
Suppose that \eqref{sym:condition} and \eqref{part:null:condtion} hold.
Then there is an $\ve_0>0$ such that when $\ve\le\ve_0$, problem \eqref{QWE}
admits a global solution $u\in\bigcap\limits_{j=0}^{2N+1}C^{j}([0,\infty), H^{2N+1-j}(\R^2\times\T))$, which
satisfies that for a generic constant $C>0$,
\begin{equation}\label{thm1:decay}
\begin{split}
&|u(t,x,y)|\le C\ve\w{t+|x|}^{-1/2}\w{t-|x|}^{-0.4},\quad|\p u(t,x,y)|\le C\ve\w{t+|x|}^{-1/2}\w{t-|x|}^{-1/2},\\
&|\p_y u(t,x,y)|\le C\ve\w{t+|x|}^{-1}.
\end{split}
\end{equation}
Furthermore, $u$ scatters to a free solution: there exists $(u_{(0)}^\infty,u_{(1)}^\infty)\in\dot H^1(\R^2\times\T)\times L^2(\R^2\times\T)$ such that
\begin{equation}\label{thm1:scater}
\|\p(u(t,\cdot)-u^\infty(t,\cdot))\|_{L^2(\R^2\times\T)}\le C\ve^2(1+t)^{-1/2}\rightarrow 0~~\mathrm{as}~~t\rightarrow+\infty,
\end{equation}
where $u^\infty$ is the solution of the homogeneous linear wave equation $\Box u^\infty=0$ on $\R^2\times\T$ with some initial data $(u_{(0)}^\infty,u_{(1)}^\infty)(x,y)$ being $2\pi$-periodic with respect to $y$.
\end{theorem}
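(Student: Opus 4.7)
The plan is a vector-field bootstrap adapted to the $\R^2\times\T$ geometry, combined with Alinhac's ghost-weight energy method. The key structural observation is that Fourier expansion in the torus variable, $u(t,x,y)=u_0(t,x)+\sum_{k\neq 0}u_k(t,x)e^{iky}$, separates the system into one 2D wave equation for $u_0$ (with slow $\w{t+|x|}^{-1/2}\w{t-|x|}^{-1/2}$ decay for $\p u_0$) and an infinite family of 2D Klein-Gordon equations of mass $|k|$ for $u_k$ ($k\neq 0$), which enjoy the faster $\w{t}^{-1}$ decay matching the $\p_y u$ bound in \eqref{thm1:decay}. The partial null condition \eqref{part:null:condtion} is precisely what is needed so that the pure wave-wave-wave self-interaction of the zero mode---the only place where the slow 2D rate is not compensated by a $\p_y$-factor---satisfies a 2D cubic null condition.

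First, imitating the reduction from \eqref{potentia1} to \eqref{potentia1-Final}, I would substitute $\Box u^i$ from \eqref{QWE} into itself to convert every non-$Q_0$ quadratic piece into a cubic term with an extra $\p u$ factor, yielding a reduced system whose quadratic nonlinearity is a pure linear combination of $Q_0(\p^a u^j,\p^b u^k)$ with $|a|+|b|\le 1$ and whose cubic part still obeys \eqref{part:null:condtion}, up to harmless quartic and higher-order remainders. Then I would commute through this reduced system the 2D Minkowski fields $Z\in\{\p_\alpha,\,\Omega_{12}=x_1\p_2-x_2\p_1,\,S=t\p_t+x\cdot\nabla_x\}$ together with the torus derivative $\p_y$. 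Lorentz boosts in the spatial directions are incompatible with the torus factor and unnecessary here: $\p_y$ plays the analogous role of organizing the mass-scale hierarchy between modes, and all these vector fields commute with $\Box$ and preserve $Q_0$ modulo lower-order $Q_0$-type commutators.

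The core of the argument is a bootstrap on the high-order energy $E_N(t)=\sum_{|\alpha|\le N}\|\p Z^\alpha u(t)\|_{L^2_{x,y}}^2$, reinforced by Alinhac's ghost-weight spacetime contribution $\sum_{|\alpha|\le N}\int_0^t\!\!\int e^{q(|x|-s)}|\bar\p Z^\alpha u|^2\,dx\,dy\,ds$, where $\bar\p$ denotes the tangential (good) derivatives on the 2D forward light cone. The ghost-weight piece controls $\bar\p Z^\alpha u$ in $L^2_{t,x,y}$; together with the null decomposition of $Q_0$ as a bilinear form containing at least one good-derivative factor, and with the $\w{t}^{-1}$ Klein-Gordon pointwise bound on the nonzero modes, this makes every quadratic contribution to $E_N'(t)$ time-integrable. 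The partial null condition extracts an analogous good-derivative factor from the critical cubic terms, while quartic and higher-order pieces are automatically subcritical. Converting $E_N(t)=O(\ve^2)$ into the pointwise bounds \eqref{thm1:decay} then reduces to applying the weighted $L^\infty$-$L^\infty$ and $L^\infty$-$L^2$ estimates for $\Box w=f$ announced in the abstract. For scattering, \eqref{thm1:decay} forces $\|F^i(u,\p u,\p^2 u)(t)\|_{L^2_{x,y}}=O(\ve^2(1+t)^{-3/2})$, so Duhamel's formula produces a free profile $(u_{(0)}^\infty,u_{(1)}^\infty)$ with the convergence rate claimed in \eqref{thm1:scater}.

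The hard part will be the 2D endpoint for the zero mode: since $\p u_0$ only decays as $t^{-1/2}$, its quadratic products are merely $O(t^{-1})$ in $L^\infty$, which is barely non-integrable, and the ghost weight must absorb exactly this logarithmic loss through the $Q_0$-null algebra. In parallel, the commutator book-keeping must be arranged so that the good derivative $\bar\p$ is never consumed at top order by the quasilinear term $Q^{\alpha\beta\mu\nu}_{ijkl}\p^2u^j\,\p u^k\,\p u^l$, while uniform-in-$k$ constants are preserved throughout the torus Fourier summation; together these constraints are what force the high regularity threshold $N\ge 17$ in \eqref{initial:data}.
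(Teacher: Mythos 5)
Your proposal reproduces the overall flavor (vector-field bootstrap plus ghost weight, with the wave/Klein--Gordon split through Fourier modes in $y$), but there is a genuine gap in the choice of commuting vector fields, and this gap is precisely the difficulty that the paper is at pains to resolve. You include the 2D scaling field $S=t\p_t+x\cdot\nabla_x$ in your $Z$-family and assert that ``all these vector fields commute with $\Box$.'' They do not: on $\R^2\times\T$ one has $[S,\Box_{t,x,y}]=-2(\p_t^2-\Delta_x)=-2\Box_{t,x,y}-2\p_y^2$, so $S$ neither commutes with $\Box$ nor satisfies the usual conformal relation $[S,\Box]=-2\Box$, and after Fourier decomposition in $y$ it fails to scale correctly against the Klein--Gordon operator $\p_t^2-\Delta_x+|n|^2$ for $n\neq0$. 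The paper flags this explicitly under ``Invalidity of the scaling vector'' and replaces the Klainerman--Sobolev machinery by tailored weighted $L^\infty$--$L^\infty$ and $L^\infty$--$L^2$ estimates. At the same time your claim that ``Lorentz boosts in the spatial directions are incompatible with the torus factor'' is backwards: $L_i=x_i\p_t+t\p_{x_i}$ for $i=1,2$ do commute with $\Box_{t,x,y}$ (they do not touch $y$) and are precisely the fields the paper keeps in $\Gamma=\{\p_{t,x},L_1,L_2,\Omega_{12}\}$. So your admissible algebra is wrong in both directions.

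A second substantive omission is the normal-form step. The reduction you describe (substituting the equation into itself, as in passing from \eqref{potentia1} to \eqref{potentia1-Final}) is only how the paper checks that the physical examples fit the template \eqref{QWE}; it is not the device used in the proof. The actual proof sets $\tilde V^i=u^i-\tfrac12\sum C^{ab}_{ijk}\p^au^j\p^bu^k$ using $\Box(fg)=g\Box f+f\Box g+2Q_0(f,g)$ to absorb all quadratic $Q_0$-terms, and then discovers that the resulting cubic ``wave-map'' term $Q_0(\p^au^j,\p^bu^k)u^l$ is still uncontrollable through $Q_0(P_{\neq0}\cdot,P_{\neq0}\cdot)\,P_{=0}u^l$. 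This forces a second transformation $V^i=\tilde V^i-\tfrac12\sum C^{ab}_{ijkl}P_{\neq0}\p^au^j\,P_{\neq0}\p^bu^k\,P_{=0}u^l$, and then the loss of derivatives introduced by this second substitution requires one to run the ghost-weight energy estimate back on the \emph{original} system \eqref{QWE} rather than on the transformed one, with the transformed system feeding only the lower-order pointwise decay hierarchy \eqref{hierarchy1}--\eqref{hierarchy2c}. None of this two-level interplay between the transformed and untransformed equations is present in your proposal, and without it the ``barely non-integrable'' zero-mode endpoint you correctly identify as the hard part cannot be closed.
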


\begin{remark} {\it By analogous proof, it can be known that
Theorem \ref{thm1} still holds for the fully nonlinear case $\sum_{j,k=1}^m\sum_{|a|,|b|\le1}C^{ab}_{ijk}Q_0(\p^au^j,\p^bu^k)
+\sum_{j,k,l=1}^m\sum_{|a|,|b|\le1}C^{ab}_{ijkl}Q_0(\p^au^j,\p^bu^k)u^l$ in problem \eqref{QWE}
when the related symmetric conditions like \eqref{sym:condition} are imposed.}
\end{remark}

\begin{remark}
{\it The $L^2(\R^2\times\T)$ norm in \eqref{thm1:scater} can be improved to the $H^N(\R^2\times\T)$ norm.
In addition, it is noted that the nonlinear wave equation on $\R^2\times\T$ may be decomposed into a system of wave equations coupled with an infinite number of Klein-Gordon equations on $\R^2$ as in (1.18) of \cite{HTY24}. Indeed, by performing the Fourier transformation with respect to the periodic variable $y$ for the equation in \eqref{QWE}, one can obtain an infinite number of coupled nonlinear equations on $\R^2$ as follows
\begin{equation}\label{reduction to WKG}
(\p_t^2-\Delta_x+|n|^2)u^i_n(t,x)=(F^i(u, \p u,\p^2u))_n(t,x),\quad x\in \R^2, n\in\Z, i=1,\cdots,m,
\end{equation}
where
\begin{equation}\label{reduction to WKG-00}
u^i_n(t,x):=\frac{1}{2\pi}\int_{\T}e^{-ny\sqrt{-1}}u^i(t,x,y)dy,\quad n\in \Z.
\end{equation}
From \eqref{reduction to WKG}, one immediately sees that the zero-modes $u^i_0$ are the solutions of wave equations
and other non-zero modes $u^i_n$ ($n\in \Z \setminus\{0\}$) solve the Klein-Gordon equations with mass $|n|$.
By the proof on \eqref{thm1:scater},
we can conclude that all the zero-mode solutions and non-zero mode solutions exhibit linear scattering in
energy spaces.}
\end{remark}

\begin{remark}
{\it It is pointed out that for the global existence of low regularity solutions to the $n$-dimensional wave maps system \eqref{Tao;wavemap22} in  $\R^{1+n}$
($n\ge 2$), there have been remarkable results. For examples, when the initial data $(\phi, \p_t\phi)(0,x)$ is small in the homogeneous Besov space $\dot{B}^{2,1}_{n/2}(\Bbb R^n)\times\dot{B}^{2,1}_{n/2-1}(\Bbb R^n)$, the author in \cite{Tataru98,Tataru01} has proved the global small data
solution of \eqref{Tao;wavemap22} in the space $C([0, \infty), \dot{B}^{2,1}_{n/2}(\Bbb R^n))\cap C^1([0, \infty), \dot{B}^{2,1}_{n/2-1}(\Bbb R^n))$;
when $(\phi, \p_t\phi)(0,x)$ is small in the critical Sobolev space $\dot{H}^{n/2}(\Bbb R^n)\times\dot{H}^{n/2-1}(\Bbb R^n)$,
 the global small data solution of \eqref{Tao;wavemap22} in the space $C([0, \infty), \dot{H}^{n/2}(\Bbb R^n))
 \cap C^1([0, \infty), \dot{H}^{n/2-1}(\Bbb R^n))$
 was established in \cite{TaoIMRN,TaoCMP}. In addition, when $(\phi, \p_t\phi)(0,x)$ is supported compactly or decays fast
 for the space variable at infinity, and $(\phi, \p_t\phi)(0,x)$ is small in
 the Sobolev space $H^{k+1}(\Bbb R^n)\times H^{k}(\Bbb R^n)$ with $k\in\Bbb N$ being suitably large,
 the global existence and long time decay rate of small data smooth solution $\phi$
 to \eqref{Tao;wavemap22} are derived in \cite{Wong}.
 In the present paper, for the smooth small initial data without compact support,
 we can show the global solution of problem \eqref{QWE}  in $\Bbb R^2\times\Bbb T$ rather than in
 the whole space $\Bbb R^3$ or $\Bbb R^2$.}
\end{remark}

\begin{remark}
{\it For the 3D problem in \eqref{QWE}, if the initial data $(u_{(0)}, u_{(1)})\in C_0^{\infty}(\Bbb R^3)$ are sufficiently small
in the Sobolev space $H^{k+1}(\Bbb R^3)\times H^{k}(\Bbb R^3)$ with $k\in\Bbb N$ being suitably large,
then the global existence  of smooth small data solution $u$ can be obtained by collecting the results in
\cite{Christodoulou86,DLY,Hormander97book,Klainerman85,Klainerman,Wong}.
For the 2D problem in \eqref{QWE}, when the initial data $(u_{(0)}, u_{(1)})\in C_0^{\infty}(\Bbb R^2)$ are sufficiently small,
the global solution $u$ may be established when we apply the related  results or methods in \cite{Alinhac01a,Alinhac01b,LM,Wong}.
This motivates us to study the global classical solutions
to  problem \eqref{QWE} on $\R^2\times\T$.}
\end{remark}

\begin{remark}
{\it For  the 4D fully nonlinear scalar wave equation
with quadratic nonlinearity on $\R^3\times\T$
\begin{equation}\label{QWE-00}
\left\{
\begin{aligned}
&\Box u=\mathcal{Q}(\p u,\p^2u),\qquad(t,x,y)\in(0,\infty)\times\R^3\times\T,\\
&(u,\p_tu)(0,x,y)=(u_{(0)},u_{(1)})(x,y)
\end{aligned}
\right.
\end{equation}
or the 3D fully nonlinear scalar wave equation
with cubic nonlinearity on $\R^2\times\T$
\begin{equation}\label{QWE-cubic-00}
\left\{
\begin{aligned}
&\Box u=\mathcal{C}(\p u,\p^2 u),\qquad(t,x,y)\in(0,\infty)\times\R^2\times\T,\\
&(u,\p_tu)(0,x,y)=(u_{(0)},u_{(1)})(x,y),
\end{aligned}
\right.
\end{equation}
where $(u_{(0)},u_{(1)})(x,y)$ are $2\pi$-periodic with respect to the variable $y$,
under the partial null conditions on $\mathcal{Q}(\p u,\p^2u)$ and $\mathcal{C}(\p u,\p^2 u)$, we have shown the
global existence of smooth small data solutions to \eqref{QWE-00} and \eqref{QWE-cubic-00} in \cite{HTY24}.
However, the arguments in \cite{HTY24} are not suitable for the  quadratic nonlinearity $\mathcal{C}(\p u,\p^2 u)$ in \eqref{QWE-cubic-00}
due to the slow time decay rate of the solution to 2D linear wave equation. In the paper, we not only extend the global existence result
in \cite{HTY24} to some class of quadratic nonlinearities
(including the solution itself) on $\R^2\times\T$ but also to the wave equation system on $\R^2\times\T$.}
\end{remark}

\begin{remark}
{\it There are a lot of results on the coupled wave equations and Klein-Gordon equations with small initial data.
For examples, the author in \cite{Georgiev-90} introduced the strong null condition for the nonlinearities
of such forms $Q_{\alpha\beta}(u,v):=\p_\alpha u\p _\beta v-\p_\alpha v\p _\beta u$ ($\alpha,\beta\in \{0,1,2,3\}$)
and further established the global well-posedness
of small data solutions for
the coupled system of wave and Klen-Gordon equations in $\R^{1+3}$. Subsequently, the author in \cite{Katayama12}
proved the corresponding global existence of solutions under a weaker condition than the strong null condition proposed in \cite{Georgiev-90}.
In addition, the global small solution  has been investigated
for the following wave-Klein-Gordon system on $\R^{1+3}$:
\begin{equation}\label{Simplied-WKG-eq}
\begin{cases}
&-\Box u=A^{\alpha\beta}\p_\alpha v \p_\beta v+Dv^2,\\
&(-\Box +1)v=u B^{\alpha\beta}\p_\alpha\p_\beta v,
\end{cases}
\end{equation}
one can be referred to \cite{Q-Wang-2015,IP19,LeF-Ma-16,Q-Wang-2020}.
There are also many results on the global well-posedness of small data solutions to some coupled wave and Klein-Gordon equations in
lower space dimensions
(see \cite{Dong-JFA,Yue-M-6, Ifrim-Stingo,Stingo-Memoirs}).}
\end{remark}

\begin{remark}
Consider  the quadratic nonlinear wave equation
on $\R^2\times\T$
\begin{equation}\label{QWE-cubic}
\left\{
\begin{aligned}
&\Box u=Q(\p u,\p^2 u),\qquad(t,x,y)\in(0,\infty)\times\R^2\times\T,\\
&(u,\p_tu)(0,x,y)=(u_{(0)},u_{(1)})(x,y)
\end{aligned}
\right.
\end{equation}
with $Q(\p u,\p^2u)=\sum_{\alpha,\beta,\gamma,\delta=0}^3
F^{\alpha\beta\gamma\delta}\p^2_{\alpha\beta}u\p^2_{\gamma\delta}u
+\sum_{\alpha,\beta,\gamma=0}^3Q^{\alpha\beta\gamma}\p^2_{\alpha\beta}u\p_\gamma u
+\sum_{\alpha,\beta=0}^3S^{\alpha\beta}\p_\alpha u\p_\beta u$,
where $F^{\alpha\beta\gamma\delta},Q^{\alpha\beta\gamma},S^{\alpha\beta}$ are constants satisfying the symmetric conditions $F^{\alpha\beta\gamma\delta}=F^{\beta\alpha\gamma\delta}=F^{\alpha\beta\delta\gamma}=F^{\gamma\delta\alpha\beta}$, $Q^{\alpha\beta\gamma}=Q^{\beta\alpha\gamma}$
and $S^{\alpha\beta}=S^{\beta\alpha}$. Moreover, $Q(\p u,\p^2 u)$ satisfies the partial full condition
\eqref{part:null:condtion}.
It should be emphasized that for the general $Q(\p u,\p^2 u)$, we have not solved the global existence of
small data solution $u$ to \eqref{QWE-cubic} in $\R^2\times\T$
unless $Q(\p u,\p^2 u)$ admits a special structure as in \eqref{QWE}.
\end{remark}

The studies on the nonlinear wave equations (systems) on product spaces are motivated from
the Kaluza-Klein theory (see \cite{Kalu, Klein, Witten}) and the propagation of waves
along infinite homogeneous waveguides (see \cite{Ett-15,PLR-03,MSS-05}). On the other hand,
the partial periodic solutions to the nonlinear wave equations (systems) also make sense in physics.
The Kaluza-Klein theory
is closely related to the supersymmetric compactification of the space-time on the product manifold
\begin{equation}\label{Manifold}
\mathcal{M}^{1+n+d}=\R^{1+n}\times K,
\end{equation}
which admits a metric $\widehat{g}=\eta_{\R^{1,n}}+k$ that is globally hyperbolic and serves as a
solution to the $(1+n+d)$-dimensional vacuum Einstein equations, here $(K,k)$ represents a compact,
Ricci-flat $d$-dimensional Riemannian manifold.
Recently, the authors in \cite{HS-21} investigate a system of quasilinear wave equations with nonlinearities
being  some combinations of quadratic null forms on $\R^3\times\T$,
which can be seen as a toy model of Einstein equations with additional compact dimensions.
It is shown in \cite{HS-21} that the  global smooth solution exists for the small and regular initial data with polynomial
decays at infinity of $\Bbb R^3$. The authors in \cite{HSW2023} demonstrate the classical global stability of the flat
Kaluza-Klein space-time $\R^{1+3}\times \mathbb{S}^1$ under small perturbations.
In \cite{HTY24}, we establish the global or almost global small data solutions of 4D fully nonlinear wave
equations on $\R^{3}\times\T$ when the general partial null
condition is fulfilled or not, meanwhile, the global or almost global small data solutions
are also obtained for the 3D fully nonlinear wave equations with cubic nonlinearities on $\R^{2}\times\T$.
However, it is unknown in \cite{HTY24} whether the global small data solution exists when the
general quadratic  nonlinearities  of 3D nonlinear wave equations on $\R^{2}\times\T$ satisfy the
corresponding partial null conditions. In the present paper, we will focus on this unresolved problem
for a class of special nonlinearities in \eqref{QWE}.

We now give the comments on the proof of Theorem \ref{thm1}. Note that
\begin{equation}\label{IntroApp:A1}
\Box_{t,x,y}(fg)=g\Box_{t,x,y}f+f\Box_{t,x,y}g+2Q_0(f,g).
\end{equation}
This equality has been extensively utilized in \cite{TaoCMP} and \cite{Tataru01} to investigate the global existence of wave
maps problems $\Box u^i=\sum_{j,k,l=1}^mC_{ijkl}Q_0(u^j,u^k)u^l$ (this kind of nonlinearity also appears
in the second term on the right hand side of the first line of \eqref{QWE}).
Set $\ds\tilde V^i:=u^i-\frac12\sum_{j,k=1}^m\sum_{|a|+|b|\le1}C^{ab}_{ijk}\p^au^j\p^bu^k$. Then it follows from
\eqref{IntroApp:A1} and direct computation that the equation in \eqref{QWE} can be changed into
\begin{equation}\label{QWE2}
\begin{split}
\Box\tilde V^i&=\sum_{j,k,l=1}^m\sum_{|a|,|b|\le2}C^{ab}_{ijkl}\underbrace{Q_0(\p^au^j,\p^bu^k)u^l}_{\color{red}bad~term}
+\sum_{j,k,l=1}^m\sum_{\alpha,\beta,\mu,\nu=0}^3\tilde Q_{ijkl}^{\alpha\beta\mu\nu}\p^2_{\alpha\beta}u^j\p_{\mu}u^k\p_{\nu}u^l\\
&\quad+\sum_{j,k,l=1}^m\sum_{\alpha,\beta,\mu=0}^3\tilde Q_{ijkl}^{\alpha\beta\mu}\p_{\alpha}u^j\p_{\beta}u^k\p_{\mu}u^l+G_4^i(\p^{\le3}u),
\end{split}
\end{equation}
where  the definition of good error term $G_4^i(\p^{\le3}u)$ is given in \eqref{App:A3} of Appendix and $\tilde Q_{ijkl}^{\alpha\beta\mu\nu}$,
$\tilde Q_{ijkl}^{\alpha\beta\mu}$ are some suitable constants satisfying the related partial null conditions (see \eqref{App:A4}
of Appendix).
Although the right hand side of \eqref{QWE2} contains at least cubic nonlinearities on $\R^2\times\T$,
the cubic term $Q_0(\p^au^j,\p^bu^k)u^l$ is of the wave map type and is still hard to be handled for us
($Q_0(\p^au^j,\p^bu^k)u^l$ is also called ``bad term" in \eqref{QWE2}).
More specifically, the $L^\infty$ decay estimate of 
$Q_0(P_{\neq0}\p^au^j,P_{\neq0}\p^bu^k)P_{=0}u^l$ contained in $Q_0(\p^au^j,\p^bu^k)u^l$ cannot be well controlled
and further we cannot establish the crucial time-space decay estimate of $P_{=0}u$ directly (as in Lemma \ref{lem:pw0:improv} below),
where $P_{=0}f(t,x):=\frac{1}{2\pi}\int_{\T}f(t,x,y)dy$ and $P_{\neq0}:={\rm Id}-P_{=0}$.
To overcome this essential difficulty, we look for another new transformation as follows
\begin{equation}\label{normal:form}
\begin{split}
V^i&:=\tilde V^i-\frac12\sum_{j,k,l=1}^m\sum_{|a|,|b|\le2}C^{ab}_{ijkl}P_{\neq0}\p^au^jP_{\neq0}\p^bu^kP_{=0}u^l.
\end{split}
\end{equation}
In this case, \eqref{QWE2} is reformulated as
\begin{equation}\label{QWE3}
\begin{split}
&\Box V^i=\cC_1^i+\cC_2^i+G^i,\\
\end{split}
\end{equation}
where
\begin{equation}\label{QWE3-1}
\begin{split}
&\cC_1^i:=\sum_{j,k,l=1}^m\sum_{\alpha,\beta,\mu,\nu=0}^3\tilde Q_{ijkl}^{\alpha\beta\mu\nu}\p^2_{\alpha\beta}u^j\p_{\mu}u^k\p_{\nu}u^l
+\sum_{j,k,l=1}^m\sum_{\alpha,\beta,\mu=0}^3\tilde Q_{ijkl}^{\alpha\beta\mu}\p_{\alpha}u^j\p_{\beta}u^k\p_{\mu}u^l,\\
&\cC_2^i:=\sum_{j,k,l=1}^m\sum_{|a|,|b|\le2}C^{ab}_{ijkl}\{P_{=0}u^l[Q_0(P_{\neq0}\p^au^j,P_{=0}\p^bu^k)+Q_0(P_{=0}\p^au^j,P_{\neq0}\p^bu^k)\\
&\qquad+Q_0(P_{=0}\p^au^j,P_{=0}\p^bu^k)]+Q_0(\p^au^j,\p^bu^k)P_{\neq0}u^l\\
&\qquad-Q_0(P_{\neq0}\p^au^j,P_{=0}u^l)P_{\neq0}\p^bu^k-Q_0(P_{=0}u^l,P_{\neq0}\p^bu^k)P_{\neq0}\p^au^j\},\\
&G^i:=G_4^i(\p^{\le3}u)+\tilde G^i(\p^{\le4}u),
\end{split}
\end{equation}
and the definition of good error term $G^i$ is given in \eqref{App:A5} of Appendix.
Although the expressions of $\cC_1^i$, $\cC_2^i$ and $G^i$ seem to be complicated,
they actually have efficient forms which can be treated well by direct analysis in the lower
order energy estimates of $u$.

We next illustrate some other difficulties in the proof of Theorem \ref{thm1} and outline how to overcome them.

\vskip 0.2 true cm

{\bf $\bullet$ Invalidity of  the scaling vector $S=t\partial_t+\sum_{i=1}^{2}x_i\partial_{x_i}$}

\vskip 0.1 true cm

Note that $\R^2\times\T$ is not scaling invariant in the periodic direction $y$.
  Moreover, the action of $S+y\p_y$ does not preserve the periodic property, namely,
  $(S+y\p_y)f|_{y=2\pi}\neq(S+y\p_y)f|_{y=0}$ holds generally provided that $f$ is $2\pi$-periodic in $y$.
  On the other hand, it is well-known that the following Klainerman-Sobolev inequality plays a key role in
  deriving the time decay rate and further proving the global existence of small data smooth solutions to
  the $d$-dimensional nonlinear wave equations
     (see \cite{Alinhac01a,Alinhac01b,Hormander97book,Klainerman})
      \begin{equation}\label{KS:ineq}
      (1+|t-r|)^{\f12}(1+t)^{\f{d-1}{2}}|\p \phi(t,x)|\le C\ds\sum_{|I|\le [d/2]+2}\|\hat{Z}^I\p \phi(t,x)\|_{L_x^2(\Bbb R^d)},
      \end{equation}
     where $\hat{Z}\in\{\p, x_i\p_j-x_j\p_i, x_i\p_t+t\p_i, 1\le i,j\le d, t\partial_t+\sum_{k=1}^d x_k\p_k\}$ and $r=|x|=\sqrt{x_1^2+\cdot\cdot\cdot+x_d^2}$.
     Unfortunately, the scaling vector $S=t\partial_t+\sum_{i=1}^{2}x_i\partial_{x_i}$ does not commutate with the wave operator $\Box:=\p_t^2-\Delta_{x}-\p_y^2$. Even if we perform the Fourier transformation with respect to the periodic variable $y$
     for the nonlinear equations $\Box_{t,x,y}u^i(t,x,y)=F^i(u, \p u,\p^2u)(t,x,y)$ as in \eqref{reduction to WKG-00}, then
from \eqref{reduction to WKG}, one immediately sees that the zero-modes $u^i_0$ are the solutions of wave equations
and other non-zero modes $u^i_n$ ($n\in \Z \setminus\{0\}$) solve Klein-Gordon equations with mass $|n|$.
In this case,  $S=t\partial_t+\sum_{i=1}^{2}x_i\partial_{x_i}$ does not commutate with the resulting
 Klein-Gordon operators $\p_t^2-\Delta_x+|n|^2$ ($n\in \Z \setminus\{0\}$) yet. This fact prevents us
 from using the classical Klainerman-Sobolev inequality \eqref{KS:ineq} to show the global existence of \eqref{QWE}.
     To overcome this difficulty, our key ingredient is to establish the pointwise decay estimates for the solutions of \eqref{QWE}
    through deriving the space-time decay estimates of zero modes and non-zero modes by some kinds of
    $L^{\infty}-L^{\infty}$ and  $L^{\infty}-L^2$ estimates in the theory
     of linear wave equations. In this process, some basic techniques in wave equations
     (see \cite{KS96,Sideris,Lei16,Kubo19})  and in  Klein-Gordon equations (see \cite{Georgiev92})
     will be utilized and improved.

\vskip 0.2 true cm

{\bf $\bullet$ Loss of derivatives in the highest order energy estimates}

\vskip 0.1 true cm

Due to the transformation \eqref{normal:form} including the second order derivatives $\p^2u$, it follows from \eqref{QWE3}
and  \eqref{QWE3-1} that it is impossible to establish the highest order energy estimates of $u$ directly since the error
term $G^i$ in \eqref{QWE3-1}
contains $\p^4u$. To overcome this difficulty, we return to investigate the quadratic nonlinear problem \eqref{QWE} by
applying S.Alinhac's ghost weight technique and the lower order energy estimates of $u$ derived from \eqref{QWE3}. 
Nevertheless, in this procedure, we have to take
some new strategies to derive the precise a priori space-time decays of $P_{=0}u$
by making full use of the null condition structure in \eqref{QWE} and taking a lot of involved analyses,
which is different from the usual $L^2$ energy methods. More concretely,
we will derive
     \begin{equation}\label{hierarchy1}
     \begin{split}
     \sum_{|a|\le 2N-2}|P_{=0}\p Z^au(t,x,y)|&\le C\ve\w{t}^{0.01}\w{x}^{-1/2},\\
     \sum_{|a|\le 2N-5}|P_{=0}\p Z^au(t,x,y)|&\le C\ve\w{t}^{0.02}\w{x}^{-1/2}\w{t-|x|}^{-1/2},\\
     \sum_{|a|\le N+6}|P_{=0}\p Z^au(t,x,y)|&\le C\ve\w{x}^{-1/2}\w{t-|x|}^{-1.3}
     \end{split}
     \end{equation}
and
     \addtocounter{equation}{1}
     \begin{align}
     \sum_{|a|\le 2N-5}|P_{=0}Z^au(t,x,y)|&\le C\ve\w{t}^{0.02},\tag{\theequation a}\label{hierarchy2a}\\
     \sum_{|a|\le N+7}|P_{=0}Z^au(t,x,y)|&\le C\ve\w{t+|x|}^{-1/2}\w{t-|x|}^{-0.3},\tag{\theequation b}\label{hierarchy2b}\\
     \sum_{|a|\le N-1}|P_{=0}Z^au(t,x,y)|&\le C\ve\w{t+|x|}^{-1/2}\w{t-|x|}^{-0.4}.\tag{\theequation c}\label{hierarchy2c}
     \end{align}
  It should be emphasized that \eqref{hierarchy2b} and \eqref{hierarchy2c} need to be shown simultaneously
  by establishing the weighted  pointwise estimate on the solutions of 2D linear wave equations and cannot be
  treated separately (otherwise, it is difficult for us to close them).

When all the difficulties mentioned above are overcome and further the related estimates are established,
 the proof of Theorem \ref{thm1} can be completed by the continuity argument.

\vskip 0.2 true cm

\noindent \textbf{Notations:}
\begin{itemize}
  \item $\w{x}:=\sqrt{1+|x|^2}$.
  \item $\N_0:=\{0,1,2,\cdots\}$, $\Z:=\{0,\pm1,\pm2,\cdots\}$ and $\Z_*:=\Z\setminus\{0\}$.
  \item $\p_0:=\p_t$, $\p_1:=\p_{x_1}$, $\p_2:=\p_{x_2}$, $\p_3:=\p_y$, $\p_x:=\nabla_x=(\p_1,\p_2)$, $\p_{t,x}:=(\p_0,\p_1,\p_2)$, $\p:=(\p_t,\p_1,\p_2,\p_y)$, and $\Box_{t,x}:=\p_t^2-\Delta_x=\p_t^2-\p_1^2-\p_2^2$.
  \item For $|x|>0$, define $\bar\p_i:=\p_i+\frac{x_i}{|x|}\p_t$, $i=1,2$ and $\bar\p=(\bar\p_1,\bar\p_2)$.
 For $|x|=0$, $\bar\p$ is $\p_{x}$.
  \item On $\R^2\times\T$, we define: $L_i:=x_i\p_t+t\p_i,i=1,2$, $L:=(L_1,L_2)$, $\Omega:=\Omega_{12}:=x_1\p_2-x_2\p_1$, $\Gamma=\{\Gamma_1,\cdots,\Gamma_{6}\}:=\{\p_{t,x},L_1,L_2,\Omega_{12}\}$ and $Z=\{Z_1,\cdots,Z_{7}\}:=\{\Gamma,\p_y\}$.
  \item $\Gamma^a:=\Gamma_1^{a_1}\Gamma_2^{a_2}\cdots\Gamma_{6}^{a_{6}}$ for $a\in\N_0^{6}$
      and $Z^a:=Z_1^{a_1}Z_2^{a_2}\cdots Z_{7}^{a_{7}}$ for $a\in\N_0^7$.
  \item $\|\cdot\|_{L^p_x}:=\|\cdot\|_{L^p(\R^2)}$, $\|\cdot\|_{L^p_y}:=\|\cdot\|_{L^p(\T)}$ and $\|\cdot\|_{L^p_{x,y}}:=\|\cdot\|_{L^p(\R^2\times\T)}$.
  \item $P_{=0}f(t,x):=\frac{1}{2\pi}\int_{\T}f(t,x,y)dy$ and $P_{\neq0}:={\rm Id}-P_{=0}$.
  \item Auxiliary $k$-order energy of vector value function $V=(V^1, ..., V^m)$ on $\R^2\times\T$:
  \begin{equation*}
   \cX_k[V](t):=\sum_{i=1}^m\sum_{|a|\le k-1}\|\w{t-|x|}P_{=0}\p^2Z^aV^i\|_{L^2(\R^2)}.
  \end{equation*}
  \item $\ds f_n(t,x):=\frac{1}{2\pi}\int_{\T}e^{-ny\sqrt{-1}}f(t,x,y)dy$, $n\in\Z$, $x\in\R^2$.
  \item For $f,g\ge0$, $f\ls g$ means $f\le Cg$ for a generic constant $C>0$.
  \item $\ds|\p^{\le j}f|:=\Big(\sum_{0\le|a|\le j}|\p^af|^2\Big)^\frac12$.
  \item $\ds|\cP u|:=\Big(\sum_{i=1}^m|\cP u^i|^2\Big)^\frac12$ with $\cP\in\{\bar\p,Z\}$.
\end{itemize}

\vskip 0.1 true cm

The paper is organized as follows.
In Section 2, some preliminaries such as several basic lemmas and the bootstrap assumptions are given.
The pointwise estimates of the zero mode together with its higher and lower order derivatives will be established in Section 3.
In Section 4, the pointwise estimates of the non-zero modes are obtained.
With these pointwise estimates and the ghost weight technique, the required energy estimates can be achieved in Section 5.
In Section 6, we complete the proof of Theorem \ref{thm1}.
In addition, the detailed derivations of \eqref{QWE2} and \eqref{QWE3-1} are given in Appendix.

\section{Preliminaries and bootstrap assumptions}\label{sect2}
\subsection{Some basic lemmas}\label{sect2-1}

At first, we show some properties on the projection operators $P_{=0}$ and $P_{\neq0}$.
\begin{lemma}
For any real valued functions $f(t,x,y)$ and $g(t,x,y)$ on $\mathbb{R}^2\times\mathbb{T}$, it holds that
\begin{equation}\label{proj:property}
\begin{split}
&\int_{\T}P_{=0}fP_{\neq0}gdy=0,\quad
\|f\|^2_{L_y^2}=\|P_{=0}f\|^2_{L_y^2}+\|P_{\neq0}f\|^2_{L_y^2},\\
&P_{=0}Zf=ZP_{=0}f,\quad P_{\neq0}Zf=ZP_{\neq0}f,\quad Z=\{\p,L,\Omega\},\\
&P_{=0}\p_yf=\p_yP_{=0}f=0,\quad(\Gamma f)_n=\Gamma(f_n),\quad \Gamma=\{\p_{t,x},L,\Omega\},\\
&P_{=0}(fg)=P_{=0}fP_{=0}g+P_{=0}(P_{\neq0}fP_{\neq0}g),\\
&P_{\neq0}(fg)=P_{\neq0}(P_{=0}fP_{\neq0}g)+P_{\neq0}(P_{\neq0}fP_{=0}g)+P_{\neq0}(P_{\neq0}fP_{\neq0}g),\\
&|P_{=0}f|\ls\|f\|_{L_y^\infty},\quad|P_{\neq0}f|\ls\|f\|_{L_y^\infty}.
\end{split}
\end{equation}
\end{lemma}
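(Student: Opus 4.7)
The plan is to derive all seven identities directly from the defining formula $P_{=0}f(t,x) = \frac{1}{2\pi}\int_\T f(t,x,y)\,dy$, leaning on just two elementary observations: (i) $P_{=0}f$ is independent of $y$; (ii) $\int_\T \p_y h\,dy = 0$ for any $2\pi$-periodic $h$. For the first identity I would pull the $y$-independent factor $P_{=0}f$ out of $\int_\T P_{=0}f\cdot P_{\neq0}g\,dy$, reducing the claim to $\int_\T P_{\neq0}g\,dy = 0$, which holds because $\int_\T g\,dy = 2\pi P_{=0}g$ by definition and hence $\int_\T(g-P_{=0}g)\,dy = 0$. The Pythagorean identity on line two then follows by expanding $\|P_{=0}f+P_{\neq0}f\|_{L_y^2}^2$ and invoking the orthogonality just obtained.

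Next, for the commutation relations on lines three and four, the crucial observation is that none of the vector fields $Z\in\{\p_{t,x},L_1,L_2,\Omega_{12}\}$ differentiates in $y$ and all of their coefficients depend only on $(t,x)$; hence each such $Z$ commutes with the operator $\int_\T\cdot\,dy$ and therefore with $P_{=0}$. This gives $P_{=0}Zf = ZP_{=0}f$, and taking the difference with $Zf$ yields $P_{\neq0}Zf = ZP_{\neq0}f$. The identity $(\Gamma f)_n = \Gamma(f_n)$ follows from the same argument applied to $f_n = \frac{1}{2\pi}\int_\T e^{-ny\sqrt{-1}}f\,dy$, since the weight $e^{-ny\sqrt{-1}}$ is independent of $(t,x)$. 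The two equalities $P_{=0}\p_y f = 0$ and $\p_y P_{=0}f = 0$ come respectively from $2\pi$-periodicity (yielding $\int_\T \p_y f\,dy = 0$) and from the fact that $P_{=0}f$ carries no $y$-dependence.

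For the product decompositions, I would write $f = P_{=0}f + P_{\neq0}f$ and $g = P_{=0}g + P_{\neq0}g$, expand $fg$ into four terms, and apply $P_{=0}$ using linearity together with the observation that any $y$-independent factor (such as $P_{=0}f$ or $P_{=0}g$) passes outside $P_{=0}$, and the identity $P_{=0}P_{\neq0}h = P_{=0}h - P_{=0}P_{=0}h = 0$. The identity for $P_{\neq0}(fg)$ then follows instantly from $P_{\neq0} = \mathrm{Id} - P_{=0}$, with the $P_{=0}f\cdot P_{=0}g$ contribution cancelling. Finally, the pointwise bounds on the last line are the Jensen-type estimate $|P_{=0}f|\le\frac{1}{2\pi}\int_\T|f|\,dy\le\|f\|_{L_y^\infty}$, and the bound for $P_{\neq0}f$ follows from the triangle inequality $|P_{\neq0}f|\le|f|+|P_{=0}f|\ls\|f\|_{L_y^\infty}$. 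There is no genuine obstacle here: every identity is essentially a one-line computation from the definition, and the only bookkeeping required is to arrange the product expansions so that each term on the right-hand side is manifestly fixed (or annihilated) by the outer projection.
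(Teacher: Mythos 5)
Your proof is correct, and since the paper gives no actual argument (it says only that the properties ``can be directly verified''), your direct verification from the definition $P_{=0}f=\frac{1}{2\pi}\int_{\T}f\,dy$ is exactly the intended approach. One small remark: in the paper's notation the set $Z=\{\p,L,\Omega\}$ includes $\p_y$, which your ``none of the vector fields differentiates in $y$'' observation does not cover directly, but you do handle it implicitly by separately showing $P_{=0}\p_y f=\p_y P_{=0}f=0$ (so the commutation identity for $\p_y$ is the trivial $0=0$); it would be cleaner to state explicitly that this case is subsumed by that computation.
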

\begin{proof}
These properties can be directly verified, we omit the details here.
\end{proof}

\begin{lemma}
For any real valued functions $f(t,x,y)$ and $g(t,x,y)$ on $\mathbb{R}^2\times\mathbb{T}$, one has
\begin{equation}\label{Q0:commutate}
Z^aQ_0(f,g)=\sum_{b+c\le a}C_{abc}Q_0(Z^bf,Z^cg),
\end{equation}
where $C_{abc}$ are some constants.
\end{lemma}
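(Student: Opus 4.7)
The plan is to reduce everything to a single Leibniz-type identity for each generator and then induct on $|a|$. Specifically, I would first establish that for every individual vector field $Z_i\in\{\p_t,\p_1,\p_2,\p_y,L_1,L_2,\Omega\}$,
\begin{equation*}
Z_i\,Q_0(f,g)=Q_0(Z_if,g)+Q_0(f,Z_ig),
\end{equation*}
with no extra ``bare'' $Q_0(f,g)$ term appearing. For the translations $\p_\alpha$ this is just the ordinary product rule applied to the four bilinear terms of $Q_0$. For $L_i$ and $\Omega$ I would verify it by a direct computation using the basic commutators $[L_1,\p_t]=-\p_1$, $[L_1,\p_1]=-\p_t$, $[L_1,\p_2]=[L_1,\p_y]=0$ (and the analogues for $L_2$), together with $[\Omega,\p_t]=[\Omega,\p_y]=0$, $[\Omega,\p_1]=-\p_2$, $[\Omega,\p_2]=\p_1$.

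The key point in the $L_i$ and $\Omega$ calculations is a cancellation built into the $(1{+}2)$-Minkowski structure of $Q_0$: when, say, $L_1$ is pushed past the derivatives, the $\p_t^2$-term of $Q_0$ produces $-\p_1f\,\p_tg-\p_tf\,\p_1g$, while the $-\p_1^2$-term produces $+\p_tf\,\p_1g+\p_1f\,\p_tg$, so these contributions cancel exactly, leaving only $Q_0(L_1f,g)+Q_0(f,L_1g)$. The same mechanism works for $\Omega$ in the $(\p_1,\p_2)$-block, and $\p_y$ trivially commutes with every spatial derivative. It is worth noting here that the scaling vector $S$ is \emph{not} among our generators; were it present, one would pick up an extra $2\,Q_0(f,g)$, which is why the statement to be proved has \emph{no} term with $b=c=0$ on the right.

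Having established the single-generator rule, I would proceed by induction on $|a|$. Writing $Z^a=Z_{i_1}\cdots Z_{i_k}$ and applying the derivation identity at each step, each application produces a sum of two terms with one $Z_{i_j}$ distributed either to the left or the right argument of $Q_0$. The only subtlety is that the generators do not commute: to re-express the accumulated products $Z_{i_{j_1}}\cdots Z_{i_{j_p}}f$ as an ordered monomial $Z^b f$ in the normal form used in the notations (i.e.\ $Z^b=Z_1^{b_1}\cdots Z_7^{b_7}$), one must invoke the commutator relations $[Z_i,Z_j]=\sum_k c_{ij}^k Z_k$ within the Poincaré-type algebra generated by $\{\p_{t,x},L_1,L_2,\Omega,\p_y\}$. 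Each such commutation trades an ordered product of length $\ell$ for one of length $\ell$ plus a finite linear combination of ordered products of length $\ell-1$. This is precisely why the sum in the lemma is indexed by the partial order $b+c\le a$ rather than $b+c=a$, and why the coefficients $C_{abc}$ are just combinatorial constants independent of $f$ and $g$.

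I do not anticipate any serious obstacle: the whole content is the single-generator derivation rule plus bookkeeping. The only point requiring a bit of care is ensuring that when ordering products of $L_i$ and $\Omega$, one systematically uses $[L_i,L_j]\in\operatorname{span}\{\Omega\}$, $[\Omega,L_i]\in\operatorname{span}\{L_j\}$, $[L_i,\p_\alpha],[\Omega,\p_\alpha]\in\operatorname{span}\{\p_\beta\}$ to close off the induction within the algebra generated by $Z$ (i.e.\ without producing the forbidden scaling vector), so that the constants $C_{abc}$ are indeed well-defined.
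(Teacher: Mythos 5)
Your proposal is correct and follows essentially the same route as the paper: the paper's proof simply records the single-generator Leibniz identities $\p_\alpha Q_0(f,g)=Q_0(\p_\alpha f,g)+Q_0(f,\p_\alpha g)$, $L_iQ_0(f,g)=Q_0(L_if,g)+Q_0(f,L_ig)$, $\Omega Q_0(f,g)=Q_0(\Omega f,g)+Q_0(f,\Omega g)$ and then says the result follows by direct computation (i.e.\ iteration plus reordering via the Lie algebra commutators), which is exactly your plan. Your write-up is more explicit about the $(1{+}2)$-Minkowski cancellation and about why the final sum is over $b+c\le a$, but the underlying argument is the same.
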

\begin{proof}
Note that
\begin{equation*}
\begin{split}
\p_\alpha Q_0(f,g)&=Q_0(\p_\alpha f,g)+Q_0(f,\p_\alpha g),\\
L_iQ_0(f,g)&=Q_0(L_if,g)+Q_0(f,L_ig),\\
\Omega Q_0(f,g)&=Q_0(\Omega f,g)+Q_0(f,\Omega g).
\end{split}
\end{equation*}
Then it follows from these identities and direct computation that \eqref{Q0:commutate} holds.
\end{proof}

The following two lemmas imply that the vector fields $Z^a$ commute with the wave operator $\Box$ and  the
resulting nonlinearities still fulfill the corresponding partial null conditions.
\begin{lemma}\label{lem:eqn:high}
Let $u(t,x,y)$ be a smooth solution of \eqref{QWE} and suppose that \eqref{part:null:condtion} holds.
Then for any multi-index $a$, $Z^au$ satisfies
\begin{equation*}\label{eqn:high}
\begin{split}
\Box Z^au^i=\sum_{\substack{j,k=1,\cdots,m\\ b+c\le a}}\bigg(\sum_{\alpha,\beta=0}^3Q_{ijk,abc}^{\alpha\beta}\p_{\alpha}Z^bu^j\p_{\beta}Z^cu^k
+\sum_{\alpha,\beta,\mu=0}^3Q_{ijk,abc}^{\alpha\beta\mu}\p^2_{\alpha\beta}Z^bu^j\p_{\mu}Z^cu^k\bigg)\\
\end{split}
\end{equation*}

\begin{equation}\label{eqn:high}
\begin{split}
+\sum_{\substack{j,k,l=1,\cdots,m\\ b+c+d\le a}}\bigg(C^{abcd}_{ijkl}Q_0(Z^bu^j,Z^cu^k)Z^du^l
+\sum_{\alpha,\beta,\mu,\nu=0}^3Q_{ijkl,abcd}^{\alpha\beta\mu\nu}\p^2_{\alpha\beta}Z^bu^j\p_{\mu}Z^cu^k\p_{\nu}Z^du^l\bigg),
\end{split}
\end{equation}
where $Q_{ijk,abc}^{\alpha\beta},Q_{ijk,abc}^{\alpha\beta\mu},C^{abcd}_{ijkl}$ and $Q_{ijkl,abcd}^{\alpha\beta\mu\nu}$ are constants,
$Q_{ijk,aa0}^{\alpha\beta\mu}=Q_{ijk}^{\alpha\beta\mu}$ and $Q_{ijkl,aa00}^{\alpha\beta\mu\nu}=Q_{ijkl}^{\alpha\beta\mu\nu}$.
Furthermore, for any $(\xi_0,\xi_1,\xi_2)\in\{\pm1\}\times\SS$,
\begin{equation}\label{null:high}
\begin{split}
\sum_{\alpha,\beta=0}^2Q_{ijk,abc}^{\alpha\beta}\xi_\alpha\xi_\beta\equiv0,\quad
\sum_{\alpha,\beta,\mu=0}^2Q_{ijk,abc}^{\alpha\beta\mu}\xi_\alpha\xi_\beta\xi_\mu\equiv0,\quad
\sum_{\alpha,\beta,\mu,\nu=0}^2Q_{ijkl,abcd}^{\alpha\beta\mu\nu}\xi_\alpha\xi_\beta\xi_\mu\xi_\nu\equiv0.
\end{split}
\end{equation}
\end{lemma}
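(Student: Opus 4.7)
The plan is induction on $|a|$, resting on the commutator identity $[Z,\Box]=0$ for every $Z\in\{\p_{t,x},\p_y,L_i,\Omega\}$. First I would verify this commutator by direct calculation: $[\p,\Box]$ and $[\p_y,\Box]$ vanish trivially, while for $L_i$ and $\Omega$ the relations $[L_i,\p_t]=-\p_i$, $[L_i,\p_j]=-\delta_{ij}\p_t$, $[L_i,\p_y]=0$, $[\Omega,\p_1]=-\p_2$, $[\Omega,\p_2]=\p_1$, $[\Omega,\p_t]=[\Omega,\p_y]=0$ make the cross-terms in $\Box$ cancel. Consequently $\Box Z^au^i=Z^aF^i(u,\p u,\p^2u)$, and the task reduces to rearranging $Z^aF^i$ into the displayed form.

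For that rearrangement I would induct on $|a|$, at each step applying Leibniz to each term of \eqref{QWE} and invoking \eqref{Q0:commutate} on the $Q_0$-factors. Whenever a $Z$ needs to be pushed past a derivative $\p_\alpha$ I would write $Z\p_\alpha=\p_\alpha Z+[Z,\p_\alpha]$; since each commutator is a linear combination of $\p_\beta$'s, the outcome only contains products of the shape $\p_\alpha Z^bu^j$ and $\p^2_{\alpha\beta}Z^bu^j$, which is precisely the form required in \eqref{eqn:high}, with $b+c\le a$ (respectively $b+c+d\le a$) coming from Leibniz bookkeeping. The diagonal identities $Q^{\alpha\beta\mu}_{ijk,aa0}=Q^{\alpha\beta\mu}_{ijk}$ and $Q^{\alpha\beta\mu\nu}_{ijkl,aa00}=Q^{\alpha\beta\mu\nu}_{ijkl}$ then follow by isolating the unique ``clean'' contribution in which every copy of $Z$ lands on the first factor without invoking any $[Z,\p_\alpha]$ commutator.

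The hard part will be showing that the partial null conditions \eqref{null:high} survive this bookkeeping. The key structural observation is that for $\alpha\in\{0,1,2\}$ the commutators $[L_i,\p_\alpha]$ and $[\Omega,\p_\alpha]$ always return linear combinations of $\p_0,\p_1,\p_2$ alone---never $\p_y$---so the $(0,1,2)$-index block of every coefficient tensor is closed under a single induction step. On this block the induced action is by matrices in $\mathfrak{so}(1,2)$, the infinitesimal Lorentz group preserving the quadratic form $\xi_0^2-\xi_1^2-\xi_2^2$; hence it carries the covector set $\{\xi_0=\pm 1,\,\xi_1^2+\xi_2^2=1\}$ into itself. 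Any multilinear identity of the type $\sum Q^{\alpha\beta}\xi_\alpha\xi_\beta\equiv0$ on this null set (and its cubic and quartic analogues) is therefore preserved by one application of $Z$, and the induction propagates the partial null conditions to all $|a|$. This Lorentz-invariance step is the conceptual core; the rest is careful combinatorial accounting.
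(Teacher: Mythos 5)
Your proposal is correct and takes essentially the same route as the paper: \eqref{eqn:high} follows from $[Z,\Box]=0$ together with Leibniz bookkeeping and the commutators $[Z,\p_\alpha]$, while \eqref{null:high} holds because $[L_i,\p_\alpha]$ and $[\Omega,\p_\alpha]$ for $\alpha\in\{0,1,2\}$ act as infinitesimal Lorentz rotations on the $(t,x_1,x_2)$ block and never produce $\p_y$, so vanishing of the coefficient tensor on the $(t,x)$-null cone is preserved (your phrase ``carries the covector set into itself'' should really say it preserves the null cone, with the normalized set handled by homogeneity, but the conclusion is the same). The paper delegates this Lorentz-invariance step to Lemma 6.6.5 of H\"ormander's book together with $\p_y\Gamma=\Gamma\p_y$; you simply reproduce the content of that lemma from first principles.
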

\begin{proof}
\eqref{eqn:high} can be obtained by direct verification.
\eqref{null:high} comes from Lemma 6.6.5 of \cite{Hormander97book}, $\p_y\Gamma=\Gamma\p_y$ with $\Gamma=\{\p_{t,x},L,\Omega\}$
and direct computation.
\end{proof}

\begin{remark}
Here we point out that from \eqref{NL:rewrite} and Lemma \ref{Q0:commutate}, the quadratic nonlinearities
in \eqref{eqn:high} still admit the corresponding structure  of $Q_0$.
\end{remark}

\begin{lemma}
Suppose that $V$ is defined by \eqref{normal:form}.
Then for any multi-index $a$, $Z^aV$ satisfies
\begin{equation}\label{eqn:normal}
\begin{split}
&\Box Z^aV^i=Z^a\cC_1^i+Z^a\cC_2^i+Z^aG^i,\\
\end{split}
\end{equation}
where $\cC_1^i, \cC_2^i, G^i$ are defined in \eqref{QWE3-1}, and
\begin{equation}\label{eqn:normal-1}
\begin{split}
&Z^a\cC_1^i=\sum_{\substack{j,k,l=1,\cdots,m\\ b+c+d\le a}}
\sum_{\alpha,\beta,\mu,\nu=0}^3\tilde Q_{ijkl,abcd}^{\alpha\beta\mu\nu}\p^2_{\alpha\beta}Z^bu^j\p_{\mu}Z^cu^k\p_{\nu}Z^du^l\\
&\qquad\quad+\sum_{\substack{j,k,l=1,\cdots,m\\ b+c+d\le a}}
\sum_{\alpha,\beta,\mu=0}^3\tilde Q_{ijkl,abcd}^{\alpha\beta\mu}\p_{\alpha}Z^bu^j\p_{\beta}Z^cu^k\p_{\mu}Z^du^l,\\
&\sum_{\alpha,\beta,\mu,\nu=0}^2\tilde Q_{ijkl,abcd}^{\alpha\beta\mu\nu}\xi_\alpha\xi_\beta\xi_\mu\xi_\nu\equiv0,\quad
\sum_{\alpha,\beta,\mu=0}^2\tilde Q_{ijkl,abcd}^{\alpha\beta\mu}\xi_\alpha\xi_\beta\xi_\mu\equiv0,\quad\forall(\xi_0,\xi_1,\xi_2)\in\{\pm1\}\times\SS.
\end{split}
\end{equation}
\end{lemma}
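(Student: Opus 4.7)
The plan is to split the statement into two parts: the commutation identity $\Box Z^a V^i = Z^a(\cC_1^i + \cC_2^i + G^i)$, and the algebraic decomposition of $Z^a \cC_1^i$ with the partial null conditions preserved. The first part is immediate once we notice that each vector field in $Z = \{\p_{t,x}, L_1, L_2, \Omega, \p_y\}$ is a Killing generator of the product Lorentzian metric $\mathrm{diag}(-1,1,1,1)$ on $\R^{1+2}\times\T$ (the Poincar\'e generators of the $\R^{1+2}$ factor together with the translation $\p_y$ on the torus); hence $[\Box, Z]=0$ for every $Z$, and applying $Z^a$ to the identity $\Box V^i = \cC_1^i + \cC_2^i + G^i$ from \eqref{QWE3} gives the claim.

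For the algebraic decomposition, I would induct on $|a|$ via the Leibniz rule. For $Z \in \{\p_{t,x}, \p_y\}$ every commutator $[Z, \p_\gamma]$ vanishes and $Z$ distributes cleanly across products, producing no new coefficients. For $Z \in \{L_1, L_2, \Omega\}$, the elementary commutators $[L_i, \p_t] = -\p_i$, $[L_i, \p_j] = -\delta_{ij}\p_t$, $[\Omega, \p_1] = -\p_2$, $[\Omega, \p_2] = \p_1$, combined with $[L_i, \p_y] = [\Omega, \p_y] = [\Omega, \p_t] = 0$, show that $[Z, \p_\gamma]$ is a constant linear combination of $\p_0, \p_1, \p_2$ when $\gamma \in \{0,1,2\}$ and vanishes when $\gamma = 3$. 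Consequently, applying $Z$ to a term of the form $\p^2_{\alpha\beta} u^j \p_\mu u^k \p_\nu u^l$ (resp.\ $\p_\alpha u^j \p_\beta u^k \p_\mu u^l$) produces a finite sum of terms of the same algebraic shape, with coefficients obtained from the original by a linear substitution that acts only on the spatial--temporal indices $\{0,1,2\}$ and fixes the $y$-index $3$. Iterating gives the decomposition \eqref{eqn:normal-1} with some constants $\tilde Q_{ijkl, abcd}^{\alpha\beta\mu\nu}$ and $\tilde Q_{ijkl, abcd}^{\alpha\beta\mu}$.

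Preservation of the partial null conditions on the new tensors then follows from the observation that the linear substitutions arising from the commutators above are infinitesimal generators of the orthogonal group of the Minkowski form $-\xi_0^2 + \xi_1^2 + \xi_2^2$ on $\R^{1+2}$, and therefore preserve the null set $\{\pm 1\} \times \SS$. Since the original coefficients of $\cC_1^i$ in \eqref{QWE3-1} satisfy the partial null conditions by construction, the symmetric contractions $\sum_{\alpha,\beta,\mu,\nu=0}^2 \tilde Q_{ijkl, abcd}^{\alpha\beta\mu\nu} \xi_\alpha \xi_\beta \xi_\mu \xi_\nu$ and $\sum_{\alpha,\beta,\mu=0}^2 \tilde Q_{ijkl, abcd}^{\alpha\beta\mu} \xi_\alpha \xi_\beta \xi_\mu$ vanish on $\{\pm 1\} \times \SS$. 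This is precisely the $(1+2)$-dimensional analogue of Lemma~6.6.5 of \cite{Hormander97book} that was invoked in the proof of Lemma~\ref{lem:eqn:high}. The only real obstacle is combinatorial bookkeeping in the induction; there is no new analytic content beyond the template already executed for Lemma~\ref{lem:eqn:high}.
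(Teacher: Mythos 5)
Your proposal is correct and follows essentially the same route as the paper: commute $Z^a$ through the equation using $[\Box,Z]=0$, track the commutator action on the partial derivatives via the Leibniz rule, and invoke the $(1+2)$-dimensional version of H\"ormander's Lemma~6.6.5 together with $[\Gamma,\p_y]=0$ to see that the resulting coefficient tensors still satisfy the partial null conditions. The paper's own proof is a one-line pointer to the Appendix (\eqref{App:A4}--\eqref{App:A5}) and to the ``analogous proof'' of Lemma~\ref{lem:eqn:high}; your write-up simply spells out the commutator bookkeeping that the paper leaves implicit.
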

\begin{proof}
By \eqref{App:A4}-\eqref{App:A5} in Appendix and the analogous proof on Lemma \ref{lem:eqn:high},
\eqref{eqn:normal-1} can be shown.
\end{proof}

When the related null conditions hold, the following estimates will play important roles in deriving
the global classical solutions of problem \eqref{QWE}.
\begin{lemma}\label{lem:null}
Suppose that the constants $N_1^{\alpha\beta},N_2^{\alpha\beta\mu}$ and $N_3^{\alpha\beta\mu\nu}$ satisfy
that for any $\xi=(\xi_0,\xi_1,\xi_2)\in\{\pm1,\SS\}$,
\begin{equation*}
\sum_{\alpha,\beta=0}^2N_1^{\alpha\beta}\xi_\alpha\xi_\beta\equiv0,
\quad\sum_{\alpha,\beta,\mu=0}^2N_2^{\alpha\beta\mu}\xi_\alpha\xi_\beta\xi_\mu\equiv0,
\quad\sum_{\alpha,\beta,\mu,\nu=0}^2N_3^{\alpha\beta\mu\nu}\xi_\alpha\xi_\beta\xi_\mu\xi_\nu\equiv0.
\end{equation*}
Then for smooth functions $f,g,h$ and $w$ on $\R^2\times\T$, it holds that
\begin{equation}\label{null:structure}
\begin{split}
|Q_0(P_{=0}f,g)|&\ls|P_{=0}\bar\p f||\p g|+|P_{=0}\p f||\bar\p g|,\\
\Big|\sum_{\alpha,\beta=0}^2N_1^{\alpha\beta}\p_{\alpha}f\p_{\beta}g\Big|
&\ls|\bar\p f||\p g|+|\p f||\bar\p g|,\\
\Big|\sum_{\alpha,\beta,\mu=0}^2N_2^{\alpha\beta\mu}\p^2_{\alpha\beta}f\p_{\mu}g\Big|
&\ls|\bar\p\p f||\p g|+|\p^2 f||\bar\p g|,\\
\Big|\sum_{\alpha,\beta,\mu=0}^2N_2^{\alpha\beta\mu}\p_{\alpha}f\p_{\beta}g\p_{\mu}h\Big|
&\ls|\bar\p f||\p g||\p h|+|\p f||\bar\p g||\p h|+|\p f||\p g||\bar\p h|,\\
\Big|\sum_{\alpha,\beta,\mu,\nu=0}^2N_3^{\alpha\beta\mu\nu}
\p^2_{\alpha\beta}f\p_{\mu}g\p_{\nu}h\Big|&\ls|\bar\p\p f||\p g||\p h|+|\p^2f||\bar\p g||\p h|+|\p^2f||\p g||\bar\p h|,\\
\Big|\sum_{\alpha,\beta,\mu,\nu=0}^2N_3^{\alpha\beta\mu\nu}
\p_{\alpha}f\p_{\beta}g\p_{\mu}h\p_{\nu}w\Big|&\ls|\bar\p f||\p g||\p h||\p w|+|\p f||\bar\p g||\p h||\p w|\\
&\quad+|\p f||\p g||\bar\p h||\p w|+|\p f||\p g||\p h||\bar\p w|,
\end{split}
\end{equation}
where $\ds|\bar\p f|:=\Big(\sum_{i=1}^2|\bar\p_if|^2\Big)^{1/2}$.
\end{lemma}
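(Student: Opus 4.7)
The proof rests on a pointwise null-frame decomposition of the spatial-temporal derivatives. For $|x|>0$, set $\omega_i=x_i/|x|$ and $\hat\xi=(1,-\omega_1,-\omega_2)$, noting that $\hat\xi\in\{\pm 1\}\times\mathbb{S}$ is an admissible vector in the partial null condition. With $\tilde\partial_0:=0$ and $\tilde\partial_i:=\bar\partial_i$ for $i=1,2$, the definition $\bar\partial_i=\partial_i+\omega_i\partial_t$ furnishes the pointwise identity
\begin{equation*}
\partial_\alpha\phi=\hat\xi_\alpha\partial_t\phi+\tilde\partial_\alpha\phi,\qquad \alpha\in\{0,1,2\},\qquad |\tilde\partial_\alpha\phi|\leq|\bar\partial\phi|.
\end{equation*}
The strategy is uniform across the six inequalities: plug this identity into every $\partial_\alpha$-factor whose index lies in $\{0,1,2\}$ and expand. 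The unique contribution carrying no $\tilde\partial$ at all has coefficient $\sum N^{\cdots}\hat\xi_{\cdots}\hat\xi_{\cdots}\cdots$, which is exactly the quantity forced to vanish by the partial null hypothesis evaluated at $\xi=\hat\xi$; each surviving cross-term harbors at least one $\tilde\partial\phi$ and is controlled by the corresponding $|\bar\partial\phi|$-factor on the right-hand side.

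First I will dispatch the purely first-order estimates---the second, fourth, and sixth lines of \eqref{null:structure}---by direct substitution in each of the $k\in\{2,3,4\}$ derivative slots; the surviving $2^k-1$ cross-terms combine at once into the stated bound. The $Q_0(P_{=0}f,g)$-estimate slots into the same scheme once one observes that $\partial_y P_{=0}f\equiv 0$ collapses $Q_0(P_{=0}f,g)$ to the genuine $(1+2)$-dimensional null form $\sum_{\alpha=0}^2\eta^{\alpha\alpha}\partial_\alpha(P_{=0}f)\partial_\alpha g$ with $\eta=\mathrm{diag}(1,-1,-1)$; the decomposition applied to both arguments, together with $\omega_1^2+\omega_2^2=1$, makes the $\partial_t(P_{=0}f)\partial_t g$ term cancel and yields the claimed bound (since $\bar\partial$ commutes with $P_{=0}$).

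The key new ingredient for the two second-derivative estimates is the analogue
\begin{equation*}
\partial_\alpha\partial_\beta f=\hat\xi_\alpha\hat\xi_\beta\partial_t^2 f+R_{\alpha\beta},\qquad |R_{\alpha\beta}|\lesssim|\bar\partial\partial f|\qquad(\alpha,\beta\in\{0,1,2\}).
\end{equation*}
I will derive this by expanding $\partial_\alpha(\hat\xi_\beta\partial_t f+\tilde\partial_\beta f)$ and invoking the commutator relation $[\partial_i,\bar\partial_j]=(\partial_i\omega_j)\partial_t$: the two $O(|x|^{-1})$ contributions of the form $(\partial_i\omega_j)\partial_t f$---one originating from $\partial_i\hat\xi_j=-\partial_i\omega_j$, the other from the commutator of $\partial_i$ with $\bar\partial_j$---cancel exactly, producing the clean remainder $R_{ij}=\bar\partial_j\partial_i f-\omega_j\bar\partial_i\partial_t f$ (with obvious simplifications whenever $\alpha$ or $\beta$ equals $0$). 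Armed with this identity I substitute $\partial_\mu g=\hat\xi_\mu\partial_t g+\tilde\partial_\mu g$ (and, for the quintic line, also $\partial_\nu h=\hat\xi_\nu\partial_t h+\tilde\partial_\nu h$), then invoke $\sum N_2^{\alpha\beta\mu}\hat\xi_\alpha\hat\xi_\beta\hat\xi_\mu=0$ or $\sum N_3^{\alpha\beta\mu\nu}\hat\xi_\alpha\hat\xi_\beta\hat\xi_\mu\hat\xi_\nu=0$ to eliminate the all-bad $\partial_t^2 f\,\partial_t g\,(\partial_t h)$ contribution, and separate the remainder into $|R_{\alpha\beta}||\partial g|\cdots$ and $|\partial^2 f||\bar\partial g|\cdots$ pieces, matching the claimed bounds exactly.

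The decisive step is the cancellation of the $O(|x|^{-1})$ commutator terms in the second-derivative identity; once that cancellation is verified, everything downstream is elementary bookkeeping with the null condition. The boundary case $|x|=0$, where the convention $\bar\partial=\partial_x$ is in force, is harmless since all expressions involved are continuous in $x$ on the smooth side and the identities can be taken in a limiting sense, so the pointwise bound extends by continuity.
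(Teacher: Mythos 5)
Your proposal is correct and follows essentially the same route as the paper: decompose each $\partial_\alpha$ ($\alpha\in\{0,1,2\}$) into $\hat\xi_\alpha\partial_t$ plus good ($\bar\partial$) derivatives, substitute, and invoke the partial null condition at $\xi=\hat\xi=(1,-x_1/|x|,-x_2/|x|)$ to annihilate the sole remaining bad term. The paper only writes out the first inequality in detail, deferring the rest to Alinhac's book and an earlier Hou--Yin lemma; your explicit check that the $O(|x|^{-1})$ commutator contributions cancel in the second-derivative decomposition is precisely the detail those references handle implicitly, and your computation of that cancellation is correct.
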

\begin{proof}
Since the proof of \eqref{null:structure} is analogous to Section 9.1 of \cite{Alinhac:book} and \cite[Lem 2.2]{HouYin20jde},
it suffices to  prove the first inequality as an example.
From the definition of $Q_0$, one has
\begin{equation*}
Q_0(P_{=0}f,g)=\p_tP_{=0}f\p_tg-\sum_{j=1}^3\p_jP_{=0}f\p_jg=\p_tP_{=0}f\p_tg-\sum_{j=1}^2\p_jP_{=0}f\p_jg.
\end{equation*}
This, together with the definition of $\bar\p_i=\p_i+\frac{x_i}{|x|}\p_t,i=1,2$ and direct algebraic computation,
yields the first inequality of \eqref{null:structure}.
\end{proof}

Next, we introduce some inequalities including the Poincar\'{e} inequality and Hardy inequality.
\begin{lemma}
For any function $f(y)$, $y\in\T$, it holds that
\begin{equation}\label{Poincare:ineq}
\|P_{\neq0}f\|_{L^2(\T)}=\|f-\bar f\|_{L^2(\T)}\ls\|\p_yf\|_{L^2(\T)},\quad\bar f:=\frac{1}{2\pi}\int_{\T}f(y)dy.
\end{equation}
For any function $g(t,x)\in L^2_x(\Bbb R^2)$, $x\in\R^2$, it holds that
\begin{equation}\label{Hardy:ineq}
\Big\|\frac{g}{|x|\ln|x|}\Big\|_{L^2(\R^2)}\ls\|\nabla_xg\|_{L^2(\R^2)}.
\end{equation}
Furthermore, it also holds that for any fixed positive constant $\lambda$,
\begin{equation}\label{mod:Hardy:ineq}
\Big\|\frac{g\chi}{\w{|x|-t}^{1/2+\lambda}}\Big\|_{L^2(\R^2)}\ls\frac{\w{t}^{1/2}\ln(2+t)}{\lambda^{1/2}}\|\nabla_xg\|_{L^2(\R^2)},
\end{equation}
where $\chi=\chi(\frac{|x|}{\w{t}})$, $\chi(s)\in C^\infty(\R)$, $0\le\chi(s)\le1$ and
\begin{equation}\label{cutoff:def}
\chi(s)=\left\{
\begin{aligned}
&1,\qquad s\in[1/2,2],\\
&0,\qquad s\not\in[1/3,3].
\end{aligned}
\right.
\end{equation}
\end{lemma}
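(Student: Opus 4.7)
The plan is to treat the three inequalities separately, with the bulk of the effort going into \eqref{mod:Hardy:ineq}. For \eqref{Poincare:ineq}, I would expand $f$ in Fourier series on $\T$: $\bar f$ is precisely the zero-frequency coefficient while $P_{\neq0}f$ is the sum over non-zero frequencies, so Parseval's identity together with $|n|\ge1$ for non-zero modes immediately yields $\|P_{\neq0}f\|_{L^2(\T)}^2\le\|\p_yf\|_{L^2(\T)}^2$. For \eqref{Hardy:ineq}, I would pass to polar coordinates $x=(r\cos\theta,r\sin\theta)$ and reduce to the one-dimensional Hardy inequality in the radial variable: the substitution $u=\ln r$ converts $\int g^2/(r(\ln r)^2)\,dr$ into $\int g^2/u^2\,du$ with $\p_ug=r\,\p_rg$, so the classical 1D Hardy gives $\int g^2/(r\ln r)^2\,r\,dr\ls\int r(\p_rg)^2\,dr$, and integrating over $\theta$ bounds the left-hand side by $\|\p_rg\|_{L^2(\R^2)}^2\le\|\nabla_xg\|_{L^2(\R^2)}^2$.

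The main work is to establish \eqref{mod:Hardy:ineq}, and the key structural observation is that $\chi(|x|/\w{t})$ vanishes at the inner radius $|x|=\w{t}/3$. After passing to polar coordinates, for $r$ in the support of $\chi$ and fixed $\theta$ the fundamental theorem of calculus reads
\[
(g\chi)(r,\theta)=\int_{\w{t}/3}^{r}\p_\rho(g\chi)(\rho,\theta)\,d\rho,
\]
with no boundary contribution at the lower endpoint since $\chi(1/3)=0$. Cauchy--Schwarz against $d\rho/\rho$ on $[\w{t}/3,3\w{t}]$ then produces the $r$-independent pointwise estimate
\[
|(g\chi)(r,\theta)|^2\ls\int_{\w{t}/3}^{3\w{t}}|\p_\rho(g\chi)|^2\rho\,d\rho.
\]
Multiplying by $r/\w{r-t}^{1+2\lambda}$, integrating in $r$ with $r\le3\w{t}$ on the support of $\chi$ and using $\int_\R\w{\sigma}^{-1-2\lambda}\,d\sigma\ls1/\lambda$, restoring the angular integration then yields
\[
\left\|\frac{g\chi}{\w{|x|-t}^{1/2+\lambda}}\right\|_{L^2(\R^2)}^2\ls\frac{\w{t}}{\lambda}\,\|\p_r(g\chi)\|_{L^2(\R^2)}^2.
\]

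The main obstacle is to control $\|\p_r(g\chi)\|_{L^2(\R^2)}$ by $\ln(2+t)\,\|\nabla_xg\|_{L^2(\R^2)}$, which is where the logarithmic loss must enter. Splitting $\p_r(g\chi)=\chi\,\p_rg+g\,\chi'(|x|/\w{t})/\w{t}$, the first piece is trivially bounded by $\|\nabla_xg\|_{L^2}$. The second piece is the delicate one: $\chi'$ is supported in the annulus $|x|/\w{t}\in[1/3,1/2]\cup[2,3]$ where $|x|\sim\w{t}$, so $|x|\ln|x|\ls\w{t}\ln(2+t)$ there, and therefore
\[
\|g\chi'/\w{t}\|_{L^2(\R^2)}\ls\ln(2+t)\left\|\frac{g}{|x|\ln|x|}\right\|_{L^2(\R^2)}\ls\ln(2+t)\,\|\nabla_xg\|_{L^2(\R^2)}
\]
by the logarithmic Hardy inequality \eqref{Hardy:ineq} proved above. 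Plugging these bounds into the previous display and taking square roots completes the proof of \eqref{mod:Hardy:ineq}.
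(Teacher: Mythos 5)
Your proof is correct, and for the key estimate \eqref{mod:Hardy:ineq} it follows a genuinely different route from the paper's. The paper introduces the primitive $q(s)=\int_{-\infty}^s\w{\sigma}^{-1-2\lambda}d\sigma$ of the weight (so $|q|\ls\lambda^{-1}$), writes the left-hand side as $\int g^2\chi^2\,r\,dq(r-t)\,d\omega$, and integrates by parts in $r$; the derivative then lands on $g^2\chi^2 r$, producing a term with $|\p_r(\chi^2 r)|$ and a term with $|g\p_rg|\chi^2 r$, both of which are closed by the logarithmic Hardy inequality \eqref{Hardy:ineq} on the annulus $|x|\sim\w{t}$ (picking up $\ln(2+t)$). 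You instead exploit that $\chi$ vanishes at the inner boundary $r=\w{t}/3$: the fundamental theorem of calculus plus Cauchy--Schwarz against $d\rho/\rho$ gives an $r$-independent pointwise bound $|g\chi|^2\ls\int|\p_\rho(g\chi)|^2\rho\,d\rho$, after which you integrate the weight $r\w{r-t}^{-1-2\lambda}$ over $[\w{t}/3,3\w{t}]$ to get the factor $\w{t}/\lambda$, and finally split $\p_r(g\chi)=\chi\p_rg+g\chi'/\w{t}$ and control the $\chi'$ term by \eqref{Hardy:ineq} exactly as the paper does. Both arguments use \eqref{Hardy:ineq} in precisely the same role, and both produce the same $\w{t}^{1/2}\lambda^{-1/2}\ln(2+t)$ factor; your version avoids the auxiliary primitive $q$ and is perhaps the more elementary of the two, while the paper's integration-by-parts style is closer in spirit to the ghost-weight computations it uses in the energy estimates. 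For \eqref{Poincare:ineq} and \eqref{Hardy:ineq} the paper simply cites them as standard; your Fourier-series argument for the former is exactly right, and your reduction of the latter to one-dimensional Hardy via $u=\ln r$ is the usual derivation (with the caveat, inherited from the paper's statement, that the weight $|x|\ln|x|$ degenerates at $|x|=1$, which both proofs gloss over).
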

\begin{proof}
\eqref{Poincare:ineq} is the Poincar\'{e} inequality and \eqref{Hardy:ineq} is the Hardy inequality.
Set $q(s)=\int_{-\infty}^s\frac{dt}{(1+|t|^2)^{1/2+\lambda}}$. Then $|q(r-t)|\ls\lambda^{-1}$ holds.
Hence \eqref{mod:Hardy:ineq} can be achieved by the following computation
\begin{equation*}
\begin{split}
&\Big\|\frac{g\chi}{\w{|x|-t}^{1/2+\lambda}}\Big\|^2_{L^2(\R^2)}=\int_0^\infty\int_{\SS}\frac{g^2\chi^2}{(1+|r-t|^2)^{1/2+\lambda}}rd\omega dr
=\int_0^\infty\int_{\SS}g^2\chi^2rd\omega dq(r-t)\\
&\quad \ls\int_0^\infty\int_{\SS}g^2|q(r-t)||\p_r(\chi^2r)|d\omega dr+\int_0^\infty\int_{\SS}|q(r-t)||g\p_rg|\chi^2rd\omega dr\\
&\quad \ls\lambda^{-1}\Big(\w{t}^{-1}\int_{\R^2}g^2\chi dx+\|g\chi\|_{L^2(\R^2)}\|\nabla_xg\|_{L^2(\R^2)}\Big)\\
&\quad \ls\lambda^{-1}\w{t}\ln^2(2+t)\int_{\R^2}\frac{g^2}{|x|^2\ln^2|x|}dx
+\lambda^{-1}\w{t}\ln(2+t)\Big\|\frac{g}{|x|\ln|x|}\Big\|_{L^2(\R^2)}\|\nabla_xg\|_{L^2(\R^2)}\\
&\quad \ls\lambda^{-1}\w{t}\ln^2(2+t)\|\nabla_xg\|^2_{L^2(\R^2)}.
\end{split}
\end{equation*}
\end{proof}

\subsection{Bootstrap assumptions}\label{sect2-2}

Define the energy for problem \eqref{QWE}
\begin{equation*}
E_k[u](t):=\sum_{i=1}^m\sum_{|a|\le k}\|\p Z^au^i\|_{L^2(\R^2\times\T)}.
\end{equation*}
We make the following bootstrap assumptions:
\begin{equation}\label{BA1}
E_{2N}[u](t)\le\ve_1(1+t)^{\ve_2}
\end{equation}
and
\begin{align}
&\sum_{|a|\le N+7}|P_{=0}Z^au(t,x,y)|\le\ve_1\w{t+|x|}^{-1/2}\w{t-|x|}^{-0.3},\label{BA2}\\
&\sum_{|a|\le N+6}|P_{=0}\p Z^au(t,x,y)|\le\ve_1\w{x}^{-1/2}\w{t-|x|}^{-1.3},\label{BA3}\\
&\sum_{|a|\le2N-8}|P_{\neq0}Z^au(t,x,y)|\le\ve_1\w{t+|x|}^{\ve_2-1/2}(\w{t+|x|}^{-0.45}+\w{x}^{-1/2}\w{t-|x|}^{-0.4}),\label{BA4}\\
&\sum_{|a|\le N+2}|P_{\neq0}Z^au(t,x,y)|\le\ve_1\w{t+|x|}^{-1},\label{BA5}
\end{align}
where $\ve_1\in(\ve,1)$ will be determined later and $\ve_2=1/100$ .

By \eqref{proj:property}, \eqref{BA2}-\eqref{BA5}, one can derive the following results.
\begin{corollary}
Let $u(t,x,y)$ be the solution of \eqref{QWE} and suppose that \eqref{BA2}-\eqref{BA5} hold.
Then we have that for  $N\ge17$,
\begin{align}
&\sum_{|a|\le N+7}|Z^au|\ls\ve_1\w{t+|x|}^{-1/2}\w{t-|x|}^{-0.3},\label{BA:pw:full}\\
&\sum_{|a|\le N+6}|P_{=0}\bar\p Z^au(t,x,y)|\ls\ve_1\w{x}^{-1/2}\w{t+|x|}^{-1}\w{t-|x|}^{-0.3},\label{pw0:good:low}\\
&\sum_{|a|\le N+1}|P_{\neq0}\bar\p Z^au(t,x,y)|\ls\ve_1\w{t+|x|}^{-2}\w{t-|x|}.\label{pwKG:good}
\end{align}
\end{corollary}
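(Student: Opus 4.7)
The three inequalities are direct consequences of the bootstrap assumptions \eqref{BA2}--\eqref{BA5} combined with the orthogonal decomposition $f = P_{=0}f + P_{\neq 0}f$ and a standard algebraic identity converting $\bar\p$ into a combination of the rotation/Lorentz fields and ordinary $\p$. I would proceed as follows.

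\textbf{Estimate \eqref{BA:pw:full}.} Write $Z^a u = P_{=0}Z^a u + P_{\neq 0}Z^a u$. For the zero-mode part, apply \eqref{BA2} directly, which already covers $|a|\le N+7$. For the non-zero-mode part, note that $N+7\le 2N-8$ since $N\ge 17$, so \eqref{BA4} applies and gives
\[
|P_{\neq 0}Z^a u|\le \ve_1\w{t+|x|}^{-0.94}+\ve_1\w{t+|x|}^{-0.49}\w{x}^{-1/2}\w{t-|x|}^{-0.4}.
\]
A short case analysis (separating $|x|\ge t/2$, where $\w{x}\sim\w{t+|x|}$, from $|x|\le t/2$, where $\w{t-|x|}\sim\w{t+|x|}$) shows both pieces are dominated by $\ve_1\w{t+|x|}^{-1/2}\w{t-|x|}^{-0.3}$; the comparison ultimately reduces to checking that $\w{t+|x|}^{0.01}\w{x}^{-1/2}\w{t-|x|}^{-0.1}\ls 1$, which holds in either region.

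\textbf{Estimates \eqref{pw0:good:low} and \eqref{pwKG:good}.} The key identity is obtained by rewriting $L_i = t\p_i + x_i\p_t$ in two ways:
\[
t\bar\p_i = L_i + (t-|x|)\omega_i\p_t, \qquad |x|\bar\p_i = L_i - (t-|x|)\p_i, \qquad \omega_i:=x_i/|x|,
\]
so that adding them yields $(t+|x|)\bar\p_i = 2L_i + (t-|x|)(\omega_i\p_t-\p_i)$. This gives the pointwise bound, valid for $t+|x|\ge 1$,
\[
|\bar\p_i u|\ls \frac{|L_i u|+|t-|x||\,|\p u|}{\w{t+|x|}}.
\]
(In the bounded region $t+|x|\le 1$ the claimed weights are all $\sim 1$, and the estimates follow trivially from $|\bar\p u|\le |\p u|$ together with \eqref{BA3} or \eqref{BA5}.) Since $P_{=0}$ and $P_{\neq 0}$ commute with $\bar\p_i$, applying this to $Z^a u$ gives
\[
|P_{=0}\bar\p_i Z^a u|\ls \frac{|P_{=0}L_i Z^a u|+|t-|x||\,|P_{=0}\p Z^a u|}{\w{t+|x|}},
\]
and likewise for $P_{\neq 0}$. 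For \eqref{pw0:good:low} with $|a|\le N+6$, the first term is controlled by \eqref{BA2} (using $|a|+1\le N+7$) and the second by \eqref{BA3}; the resulting weights combine to $\w{t+|x|}^{-3/2}\w{t-|x|}^{-0.3}$ (handled by $\w{t+|x|}^{-1/2}\le \w{x}^{-1/2}$) plus $\w{x}^{-1/2}\w{t-|x|}^{-0.3}\w{t+|x|}^{-1}\cdot (|t-|x||/\w{t-|x|})$, both of which fit the target $\ve_1\w{x}^{-1/2}\w{t+|x|}^{-1}\w{t-|x|}^{-0.3}$. For \eqref{pwKG:good} with $|a|\le N+1$, both $L_i Z^a u$ and $\p Z^a u$ are $Z^{|a|+1}u$-type quantities with $|a|+1\le N+2$, so \eqref{BA5} gives $\ve_1\w{t+|x|}^{-1}$ for each, and the identity yields $|P_{\neq 0}\bar\p Z^a u|\ls \ve_1\w{t+|x|}^{-2}(1+|t-|x||)\ls \ve_1\w{t+|x|}^{-2}\w{t-|x|}$.

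\textbf{Main obstacle.} There is no essential obstacle here; this is a book-keeping corollary of the bootstrap hypotheses. The only mild technical points are (i) matching derivative counts in the three bootstraps ($N+7,N+6,2N-8,N+2$) against the counts in the claimed bounds (which is the reason \eqref{BA:pw:full} needs \eqref{BA4}, not \eqref{BA5}), and (ii) performing the elementary weight comparisons in the regions $|x|\lessgtr t/2$, both of which are routine once the identity $(t+|x|)\bar\p_i = 2L_i + (t-|x|)(\omega_i\p_t-\p_i)$ is in hand.
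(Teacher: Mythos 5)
Your proof is correct and takes essentially the same approach as the paper. The paper uses the single identity $|x|\bar\p_i f = L_i f + (|x|-t)\p_i f$ together with an explicit split into the regions $|x|\lessgtr \langle t\rangle/2$ (handling the inner region directly from \eqref{BA3}, \eqref{BA5} since there $\w{t-|x|}\approx\w{t+|x|}$); your variant $(t+|x|)\bar\p_i = 2L_i + (t-|x|)(\omega_i\p_t-\p_i)$ is a harmless repackaging that avoids the region split (apart from the trivial case $t+|x|\le1$), but the underlying mechanism --- trading $\bar\p$ for $L$ at the cost of a $(t-|x|)$-weighted $\p$, and invoking \eqref{BA2}/\eqref{BA4} for \eqref{BA:pw:full} --- is identical.
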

\begin{proof}
\eqref{BA:pw:full} can be achieved by \eqref{proj:property}, \eqref{BA2} and \eqref{BA4}.

In the region of $|x|\le\w{t}/2$, $\w{t-|x|}\approx\w{t+|x|}$ holds.
In view of \eqref{BA3} and \eqref{BA5}, it suffices to prove \eqref{pw0:good:low} and \eqref{pwKG:good} in the region
of  $|x|\ge\w{t}/2$.
According to the definition of $\bar\p_i$, one obtains that for any function $f$,
\begin{equation}\label{good:ident}
|x|\bar\p_if=|x|(\frac{x_i}{|x|}\p_t+\p_i)f=L_if+(|x|-t)\p_if.
\end{equation}
Together with \eqref{BA2} and \eqref{BA3}, this yields
\begin{equation*}
\begin{split}
\w{x}\sum_{|a|\le N+6}|P_{=0}\bar\p Z^au(t,x,y)|
&\ls\sum_{|b|\le N+7}|P_{=0}Z^bu|+\w{t-|x|}\sum_{|a|\le N+6}|P_{=0}\p Z^au|\\
&\ls\ve_1\w{t+|x|}^{-1/2}\w{t-|x|}^{-0.3}+\ve_1\w{x}^{-1/2}\w{t-|x|}^{-0.3}.
\end{split}
\end{equation*}
Thus, the proof of \eqref{pw0:good:low} is completed.
Analogously, \eqref{pwKG:good} can be shown.
\end{proof}

\section{Pointwise estimates of the zero modes}\label{sect33}

This section aims to improve the bootstrap assumptions \eqref{BA2} and \eqref{BA3}.

\subsection{Higher order pointwise estimates of the zero modes}

\begin{lemma}
Let $u(t,x,y)$ be the smooth solution of \eqref{QWE}, then for any multi-index $a$ with $|a|\le2N-2$, it holds that
\begin{equation}\label{pw:high}
\begin{split}
\w{x}^{1/2}|P_{=0}\p Z^au(t,x,y)|&\ls E_{|a|+2}[u](t),\\
\w{x}^{1/2}|P_{\neq0}Z^au(t,x,y)|&\ls E_{|a|+2}[u](t).
\end{split}
\end{equation}
\end{lemma}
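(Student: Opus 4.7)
The proof rests on a 2D weighted Sobolev embedding that I would establish first: for any sufficiently decaying $v\in C^\infty(\R^2)$ and $\hat Z\in\{\p_{x_1},\p_{x_2},\Omega\}$,
\begin{equation*}
\w{x}^{1/2}|v(x)|\ls\sum_{|b|\le 2}\|\hat Z^b v\|_{L^2(\R^2)}.
\end{equation*}
For $|x|\le 1$ this is the standard embedding $H^2(\R^2)\hookrightarrow L^\infty$. For $r=|x|\ge 1$ I would start from the identity $r|v(r,\theta)|^2=-\int_r^\infty(|v|^2+2sv\,\p_sv)\,ds$, take supremum in $\theta$, and combine the 1D Sobolev inequality $\sup_\theta|v|^2\ls\|v\|_{L^2(\SS^1)}^2+\|\Omega v\|_{L^2(\SS^1)}^2$ with Cauchy--Schwarz and the polar identity $\|g\|_{L^2(\R^2)}^2=\int_0^\infty s\|g\|_{L^2(\SS^1)}^2\,ds$; the first integral on the right yields at worst a factor $r^{-1}$, which is absorbed by the $r$ on the left.

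For the first inequality in \eqref{pw:high}, observe that $P_{=0}\p Z^au$ is independent of $y$, $\hat Z$ commutes with $P_{=0}$, and $P_{=0}\p_y=0$. Applying the above weighted Sobolev in $x$, and using $\|P_{=0}g\|_{L^2_x}\le(2\pi)^{-1/2}\|P_{=0}g\|_{L^2_{x,y}}\le(2\pi)^{-1/2}\|g\|_{L^2_{x,y}}$ (by \eqref{proj:property} together with the $y$-independence of $P_{=0}g$), gives
\begin{equation*}
\w{x}^{1/2}|P_{=0}\p Z^au(t,x)|\ls\sum_{|b|\le 2}\|\hat Z^b\p Z^au\|_{L^2_{x,y}}\ls E_{|a|+2}[u](t),
\end{equation*}
where the last step uses that each $\hat Z$ lies in the $Z$-family and the commutators $[\hat Z,\p]$ only produce operators of the same form.

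For the second inequality, I would first fix $(t,x)$ and combine Poincar\'e \eqref{Poincare:ineq} with the 1D Sobolev embedding $H^1(\T)\hookrightarrow L^\infty(\T)$, using $\p_yP_{\neq0}Z^au=\p_yZ^au$, to obtain $|P_{\neq0}Z^au(t,x,y)|\ls\|\p_yZ^au(t,x,\cdot)\|_{L^2_y}$. Next, setting $f:=P_{\neq0}Z^au$ and applying the 2D weighted Sobolev at each fixed $y$, followed by the same 1D Sobolev in $y$ to bound
\begin{equation*}
\sup_{y'}\|\hat Z^bf(t,\cdot,y')\|_{L^2_x}^2\ls\|\hat Z^bf\|_{L^2_{x,y}}^2+\|\p_y\hat Z^bf\|_{L^2_{x,y}}^2,
\end{equation*}
and finally Poincar\'e once more on $\hat Z^bf=P_{\neq0}\hat Z^bZ^au$, I arrive at
\begin{equation*}
\w{x}^{1/2}|P_{\neq0}Z^au(t,x,y)|\ls\sum_{|b|\le 2}\|\p_y\hat Z^bZ^au\|_{L^2_{x,y}}\ls E_{|a|+2}[u](t),
\end{equation*}
since $\p_y$ is a component of $\p$ and $\hat Z^bZ^a$ is a $Z$-word of length at most $|a|+2$.

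I do not expect a serious obstacle; the only care required is tracking the commutators of $\hat Z$, $\p$, $\p_y$, and $P_{=0}$, all of which remain inside the $Z$-family without generating polynomial weights. The key algebraic point is that Poincar\'e lets me trade each occurrence of $P_{\neq0}$ for one derivative $\p_y$, which is exactly what keeps the total derivative count at $|a|+2$ rather than $|a|+3$.
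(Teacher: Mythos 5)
Your argument is correct and essentially matches the paper's: both reduce the claim to the 2D weighted Sobolev bound $\w{x}^{1/2}|f(x)|\ls\sum_{|b|+|c|\le2}\|\nabla_x^b\Omega^cf\|_{L^2(\R^2)}$ together with the $\T$-direction Sobolev embedding and Poincar\'e inequality (to trade each $P_{\neq0}$ for a $\p_y$), and they order these steps slightly differently but identically in substance. The only difference is cosmetic: where you sketch a proof of the 2D weighted Sobolev inequality via the radial identity $\p_r(r|v|^2)$, the paper simply cites it from Klainerman's work.
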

\begin{proof}
For any function $f(x)$ defined on $\R^2$, it can be obtained by the Sobolev embedding on $\R^2$ and \cite[Proposition 1]{Klainerman85} that
\begin{equation}\label{pw:high:pf1}
\w{x}^{1/2}|f(x)|\ls\sum_{|a|+|b|\le2}\|\nabla_x^a\Omega^bf\|_{L^2(\R^2)}.
\end{equation}
On the other hand, for any function $g(x,y)$ defined on $\R^2\times\T$, we can conclude from the Sobolev embedding theorem on $\T$ and the Poincar\'{e} inequality \eqref{Poincare:ineq} that
\begin{equation*}
\begin{split}
|P_{=0}g(x,y)|&\ls\|P_{=0}g(x,y)\|_{L^2(\T)}+\|\p_yP_{=0}g(x,y)\|_{L^2(\T)}\ls\|P_{=0}g(x,y)\|_{L^2(\T)},\\
|P_{\neq0}g(x,y)|&\ls\|P_{\neq0}g(x,y)\|_{L^2(\T)}+\|P_{\neq0}\p_yg(x,y)\|_{L^2(\T)}\ls\|P_{\neq0}\p_yg(x,y)\|_{L^2(\T)}.
\end{split}
\end{equation*}
This, together with \eqref{proj:property} and \eqref{pw:high:pf1}, yields \eqref{pw:high}.
\end{proof}

\begin{lemma}\label{lem:Sobolev}
Let $V$ be defined by \eqref{normal:form}, then for any multi-index $a$ with $|a|\le2N-5$, we have that
\begin{equation}\label{pw0:V1}
\begin{split}
&\w{x}^{1/2}\w{t-|x|}^{1/2}|P_{=0}\p Z^aV(t,x,y)|\ls E_{|a|+2}[V](t)+\cX_{|a|+2}[V](t),\\
&\cX_k[V](t):=\sum_{i=1}^m\sum_{|b|\le k-1}\|\w{t-|x|}P_{=0}\p^2Z^bV^i\|_{L^2(\R^2)}.
\end{split}
\end{equation}
\end{lemma}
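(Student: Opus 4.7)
I start by noting that $P_{=0}\p Z^aV^i$ is a function of $(t,x)$ alone, so the desired pointwise estimate reduces to a 2D statement on $\R^2$. I would apply the Klainerman--Sobolev-type inequality \eqref{pw:high:pf1} to the weighted function $g(x):=\w{t-|x|}^{1/2}P_{=0}\p Z^aV^i(t,x)$, obtaining
$$\w{x}^{1/2}\w{t-|x|}^{1/2}|P_{=0}\p Z^aV^i(t,x)|\ls\sum_{|b|+|c|\le 2}\|\nabla_x^b\Omega^c[\w{t-|x|}^{1/2}P_{=0}\p Z^aV^i]\|_{L^2(\R^2)}.$$

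The next step is to expand the derivatives on the right via Leibniz. Two observations make the expansion manageable. First, $\Omega\w{t-|x|}^{1/2}=0$ since $\Omega|x|=0$, so only $\nabla_x$ can strike the weight. Second, $|\nabla_x^k\w{t-|x|}^{1/2}|\ls\w{t-|x|}^{1/2}$ because $\w{t-|x|}\ge 1$ renders the negative powers bounded. Moreover, $\nabla_x$ and $\Omega$ commute with $P_{=0}$ and with $\p$ modulo commutators $[\p_\alpha,Z]\in\operatorname{span}\{\p\}$. Therefore each term on the right is controlled by $\|\w{t-|x|}^{1/2}P_{=0}\p Z^{a'}V^i\|_{L^2(\R^2)}$ for some multi-index $a'$ with $|a'|\le|a|+2$.

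To estimate this weighted norm I would use the Cauchy--Schwarz bound $\|\w{t-|x|}^{1/2}f\|_{L^2}^2\le\|f\|_{L^2}\|\w{t-|x|}f\|_{L^2}$ followed by $ab\le a^2+b^2$, reducing the task to bounding the two factors separately. The first factor $\|P_{=0}\p Z^{a'}V^i\|_{L^2(\R^2)}$ is controlled by $E_{|a|+2}[V](t)$. For the second factor $\|\w{t-|x|}P_{=0}\p Z^{a'}V^i\|_{L^2(\R^2)}$, I commute a $\p$-generator from within $Z^{a'}$ to the outside, rewriting $\p Z^{a'}V^i=\sum_c Z^c\p^2V^i+(\text{lower order})$ with $|c|\le|a'|-1\le|a|+1$. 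I then move $Z^c$ past $\w{t-|x|}$ using $\Omega\w{t-|x|}=0$ and the direct estimate $|L_i\w{t-|x|}|\ls\w{t-|x|}$ (from $L_i=x_i\p_t+t\p_i$). This identifies the second factor with ingredients of $\cX_{|a|+2}[V](t)$.

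The main obstacle is the edge case where $Z^{a'}$ has no $\p$-factor, i.e., it is a pure product of $L_i$ and $\Omega$ (any $\p_y$ would be killed by $P_{=0}$); then the above commutation cannot produce a second $\p$. I would treat this case by induction on $|a|$, using \eqref{pw:high} and the Hardy-type inequalities \eqref{Hardy:ineq}--\eqref{mod:Hardy:ineq} at the base to express $\|\w{t-|x|}P_{=0}\p V^i\|_{L^2(\R^2)}$ in terms of quantities controlled by $E_{|a|+2}[V]+\cX_{|a|+2}[V]$, while the lower-order commutator terms at the inductive step fit the induction hypothesis.
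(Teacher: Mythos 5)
The proposed argument contains a genuine gap that I do not think can be repaired within the framework you chose, and the ``edge case'' you flagged is a symptom of it rather than the only problem.

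The difficulty appears already at the first application of the weighted Sobolev inequality. Applying \eqref{pw:high:pf1} to $g=\w{t-|x|}^{1/2}P_{=0}\p Z^aV^i$, the $b=c=0$ term of the right-hand side is $\|\w{t-|x|}^{1/2}P_{=0}\p Z^aV^i\|_{L^2(\R^2)}$, and your Cauchy--Schwarz then reduces you to controlling $\|\w{t-|x|}P_{=0}\p Z^{a'}V^i\|_{L^2(\R^2)}$. This last quantity --- a \emph{single} $\p$ with the \emph{full} $\w{t-|x|}$ weight --- is simply not controlled by $E_{|a|+2}[V]+\cX_{|a|+2}[V]$: the energy $E$ carries no weight, and $\cX$ only carries the weight on \emph{second} $\p$-derivatives. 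Concretely, take $V(t,x)=t^{-1/2}\phi(|x|-t)\chi(|x|/t)$ with $\phi(s)=\w{s}^{-1/2}$ and $\chi$ a cutoff vanishing near $|x|=0$. Then a direct computation gives $\|\p Z^{\le2}V\|_{L^2}+\|\w{t-|x|}\p^2Z^{\le1}V\|_{L^2}\approx1$ (bounded), yet $\|\w{t-|x|}\,\p V\|_{L^2}\approx(\ln t)^{1/2}$ and even $\|\w{t-|x|}^{1/2}\,\p V\|_{L^2}\approx(\ln t)^{1/2}$ are unbounded in $t$, while the pointwise left-hand side $\w{x}^{1/2}\w{t-|x|}^{1/2}|\p V|$ is $O(1)$ in agreement with \eqref{pw0:V1}. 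So the very quantity you are trying to pass through is larger than what the lemma allows; the chain ``K--S applied to the weighted function, then Cauchy--Schwarz, then commute out a $\p$'' is lossy and cannot give the stated inequality with a $t$-uniform constant.

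The Hardy-based repair you propose for the pure $L^\alpha\Omega^\beta$ case cannot close this gap either, because \eqref{Hardy:ineq} and \eqref{mod:Hardy:ineq} both go in the \emph{opposite} direction from what you need: they bound $\|\,g/(|x|\ln|x|)\,\|$ and $\|\,g\chi\,\w{|x|-t}^{-1/2-\lambda}\,\|$ (negative powers of the weights, i.e.\ gains) by $\|\nabla_x g\|$, whereas you need to bound a norm carrying a \emph{positive} power $\w{t-|x|}^{+1}$. There is no Hardy-type device in the paper that produces such an upgrade, and the above profile shows there cannot be one in general.

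The proof of \eqref{pw0:V1} (referenced by the paper to (3.12) of \cite{HTY24}) proceeds differently: one performs a one-dimensional Sobolev trace along the radial variable, after a Sobolev embedding in the angular variable. The point of that trace argument is that when the radial derivative falls on the weight $\rho^{1/2}\w{t-\rho}^{1/2}$ it only produces $\rho^{-1/2}\w{t-\rho}^{1/2}$ or $\rho^{1/2}\w{t-\rho}^{-1/2}$ (because $|\p_\rho\w{t-\rho}^{1/2}|\le\tfrac12\w{t-\rho}^{-1/2}$), so the surviving product is of the \emph{asymmetric} form $\|\hat h\|_{L^2}\cdot\|\w{t-|x|}\p_r\hat h\|_{L^2}$: the unweighted factor has only one $\p$ (giving $E$) and the fully-weighted factor carries $\p_r$ in addition, i.e.\ a genuine second derivative (giving $\cX$). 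Your symmetric Cauchy--Schwarz $\|\w{t-|x|}^{1/2}f\|^2\le\|f\|\,\|\w{t-|x|}f\|$ applied with the same $f$ cannot reproduce this asymmetry. Two further minor points: $\w{t-|x|}^{1/2}$ is not $C^2$ at $x=0$, so the Klainerman--Sobolev inequality cannot be applied to $g$ directly without a cutoff and a separate treatment of $|x|\lesssim1$; and when you write $|\nabla_x^k\w{t-|x|}^{1/2}|\ls\w{t-|x|}^{1/2}$ the correct statement (away from the origin) is $\ls\w{t-|x|}^{-1/2}$, which is in fact what the radial argument exploits.
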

\begin{proof}
See (3.12) of \cite{HTY24} for the proof.
\end{proof}
To apply Lemma \ref{lem:Sobolev}, it is required to derive the estimates of $E_{2N-3}[V](t)$ and $\cX_{2N-3}[V](t)$
on the right hand side of \eqref{pw0:V1}.
\begin{lemma}
Let $V$ be defined by \eqref{normal:form}, one has
\begin{equation}\label{energy:V}
E_{2N-3}[V](t)+\cX_{2N-3}[V](t)\ls\ve_1\w{t}^{2\ve_2}.
\end{equation}
\end{lemma}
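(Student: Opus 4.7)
The strategy is to exploit that the normal-form variable $V$ defined in \eqref{normal:form} satisfies the cubic-or-higher order wave system $\Box V^i = \cC_1^i + \cC_2^i + G^i$ of \eqref{QWE3}. Commuting with $Z^a$ for $|a|\le 2N-3$ via \eqref{eqn:normal} and running the standard energy identity with multiplier $\partial_t Z^a V^i$ gives
\begin{equation*}
\frac{d}{dt}E_{2N-3}[V](t)^2 \ls E_{2N-3}[V](t)\sum_{|a|\le 2N-3}\|Z^a(\cC_1+\cC_2+G)\|_{L^2_{x,y}}.
\end{equation*}
For the weighted quantity $\cX_{2N-3}[V]$, the key observation is that $P_{=0}\partial_y = 0$, so the zero mode $P_{=0}V^i$ satisfies the 2D wave equation $\Box_{t,x}P_{=0}V^i = P_{=0}(\cC_1^i+\cC_2^i+G^i)$. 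A weighted estimate for 2D waves of the form $\w{t-|x|}|\partial^2 f|\ls \sum_{|b|\le 1}|\partial\Gamma^b f|+\w{t-|x|}|\Box_{t,x}f|$, applied to $f = P_{=0}Z^a V^i$, together with $\|P_{=0}h\|_{L^2_x}\ls\|h\|_{L^2_{x,y}}$, then yields
\begin{equation*}
\cX_{2N-3}[V](t) \ls E_{2N-3}[V](t) + \sum_{|a|\le 2N-4}\|\w{t-|x|}P_{=0}Z^a(\cC_1+\cC_2+G)\|_{L^2_x}.
\end{equation*}

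The core of the proof is estimating these nonlinear right-hand sides. After $Z^a$ differentiation of a cubic term in $\cC_1,\cC_2$ (or a quartic term in $G$), the factor carrying the most derivatives is placed in $L^2$ using $E_{2N}[u]\le\ve_1\w{t}^{\ve_2}$ from \eqref{BA1} (converting between $\partial Z^a V$ and $\partial Z^a u$ produces only higher-order quadratic remainders of the same differentiation count), while the remaining factors are placed in $L^\infty$ via the bootstrap bounds \eqref{BA2}--\eqref{BA5} and their consequences \eqref{BA:pw:full}--\eqref{pwKG:good}. Whenever the partial null structure \eqref{eqn:normal-1} is available, Lemma \ref{lem:null} turns one $\partial$ into $\bar\partial$, gaining the decisive $\w{t+|x|}^{-1}\w{t-|x|}$ factor from \eqref{pw0:good:low} or \eqref{pwKG:good}; whenever a $\partial_y$ lands on a factor, that factor must be a $P_{\neq 0}$ component and enjoys the Klein--Gordon decay $\w{t+|x|}^{-1}$ from \eqref{BA5}. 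The $\w{x}^{-1/2}$ profile of the zero-mode pointwise bounds is reconciled with the $\w{t-|x|}$-weight in $\cX$ via the Hardy-type inequality \eqref{mod:Hardy:ineq} with $\lambda = \ve_2$.

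The hardest contribution is the wave-map-type piece $P_{=0}u^l\,Q_0(P_{=0}\partial^a u^j, P_{=0}\partial^b u^k)$ in $\cC_2^i$, where none of the three factors enjoys Klein--Gordon decay. The normal form \eqref{normal:form} was engineered precisely to remove the companion term $Q_0(P_{\neq 0}\partial^a u^j, P_{\neq 0}\partial^b u^k)P_{=0}u^l$; on the surviving zero-mode-only piece, the null structure of $Q_0$ in its 2D sense (since $P_{=0}\partial_y=0$) combined with Lemma \ref{lem:null} extracts the needed $\bar\partial$ factor, which is precisely what closes the $\w{t-|x|}$-weighted estimate. Summing all contributions produces a total nonlinear bound of the form $\ls \ve_1^2\w{t}^{-1+2\ve_2}$, and Gr\"onwall integration together with the small initial data bound $E_{2N-3}[V](0)+\cX_{2N-3}[V](0)\ls\ve$ then yields \eqref{energy:V}, provided $\ve_1$ is chosen sufficiently small.
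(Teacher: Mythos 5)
Your proposal follows essentially the same route as the paper's proof: derive a weighted $L^2$ bound on $\Box Z^a V$ for $|a|\le 2N-3$ by combining the null structure of $\cC_1, \cC_2$, the Klein--Gordon decay of non-zero modes from \eqref{BA5}, the zero-mode pointwise bounds \eqref{BA2}--\eqref{BA3} and their $\bar\p$-consequences, the highest-order energy \eqref{BA1}, and the (modified) Hardy inequalities, then close $E_{2N-3}[V]$ by the $\p_t Z^a V$-multiplier energy identity and Gr\"onwall, and close $\cX_{2N-3}[V]$ by a Klainerman--Sideris type 2D wave estimate applied to $P_{=0}Z^a V$. One small correction: that wave estimate carries the $\w{t+|x|}$ weight, not $\w{t-|x|}$, on the source term, i.e.\ $\|\w{t-|x|}\p_{t,x}^2 f\|_{L^2(\R^2)}\ls\sum_{|b|\le 1}\|\p_{t,x}\Gamma^b f\|_{L^2(\R^2)}+\|\w{t+|x|}\Box_{t,x}f\|_{L^2(\R^2)}$ (Lemma~5.1 of \cite{HTY24}); this is harmless here because the single estimate one actually proves, $\sum_{|a|\le 2N-3}\|\w{t+|x|}\Box Z^aV\|_{L^2_{x,y}}\ls\ve_1^3\w{t}^{\ve_2}\ln(2+t)$, simultaneously supplies the $\w{t}^{\ve_2-1}\ln(2+t)$ decay needed for Gr\"onwall and the source term in the $\cX_{2N-3}$ bound.
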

\begin{proof}
First, we establish the estimate
\begin{equation}\label{energy:V:pf1}
\sum_{|a|\le2N-3}\|\w{t+|x|}\Box Z^aV(t,x,y)\|_{L^2(\R^2\times\T)}\ls\ve_1^3\w{t}^{\ve_2}\ln(2+t).
\end{equation}

Note that $\Box Z^aV^i=Z^a\cC_1^i+Z^a\cC_2^i+Z^aG^i$.

For $Z^a\cC_1^i$, there is at most one of the three factors in $\p^2_{\alpha\beta}Z^bu^j\p_{\mu}Z^cu^k\p_{\nu}Z^du^l$ and $\p_{\alpha}Z^bu^j\p_{\beta}Z^cu^k\p_{\mu}Z^du^l$ which does not fulfill \eqref{BA:pw:full}.
For this factor, its $L^2_{x,y}$ estimate will be utilized.
Then it follows from \eqref{BA1} and \eqref{BA:pw:full} that
\begin{equation}\label{energy:V:pf2}
\sum_{|a|\le2N-3}\|\w{t+|x|}Z^a\cC_1^i\|_{L^2(\R^2\times\T)}\ls\ve_1^2E_{2N}[u](t)\ls\ve_1^3\w{t}^{\ve_2}.
\end{equation}
For $Z^a\cC_2^i$, collecting \eqref{proj:property}, \eqref{Q0:commutate} and \eqref{null:structure} yields
\begin{equation}\label{energy:V:pf3}
\begin{split}
&|Z^a\cC_2^i|\ls\sum_{|b|+|c|+|d|\le|a|}\Big\{|P_{=0}Z^bu||P_{=0}\bar\p \p^{\le2}Z^cu|(|\p^{\le2}P_{=0}\p Z^du|+|\p^{\le3}P_{\neq0}Z^du|)\\
&+|P_{=0}Z^bu||\p^{\le2}P_{=0}\p Z^cu||P_{\neq0}\bar\p \p^{\le2}Z^du|\\
&+|P_{\neq0}Z^bu||\p^{\le2}\p Z^cu||\p^{\le2}\p Z^du|
+|P_{=0}\p Z^bu||\p^{\le3}P_{\neq0}Z^cu||\p^{\le3}P_{\neq0}Z^du|\Big\}.
\end{split}
\end{equation}
When $|b|\le N+2$, it follows from \eqref{BA2}, \eqref{BA3} and \eqref{BA5} that
\begin{equation}\label{energy:V:pf4}
\begin{split}
&\quad\|\w{t+|x|}|P_{=0}Z^bu||P_{=0}\bar\p \p^{\le2}Z^cu|(|\p^{\le2}P_{=0}\p Z^du|+|\p^{\le3}P_{\neq0}Z^du|)\|_{L^2(\R^2\times\T)}\\
&\quad+\|\w{t+|x|}|P_{=0}Z^bu||\p^{\le2}P_{=0}\p Z^cu||P_{\neq0}\bar\p \p^{\le2}Z^du|\|_{L^2(\R^2\times\T)}\\
&\quad+\|\w{t+|x|}|P_{\neq0}Z^bu||\p^{\le2}\p Z^cu||\p^{\le2}\p Z^du|\|_{L^2(\R^2\times\T)}\\
&\quad+\|\w{t+|x|}|P_{=0}\p Z^bu||\p^{\le3}P_{\neq0}Z^cu||\p^{\le3}P_{\neq0}Z^du|\|_{L^2(\R^2\times\T)}\\
&\ls\ve_1\|\w{t+|x|}^{1/2}|\p^{\le2}P_{=0}\p Z^cu|(|\p^{\le2}P_{=0}\p Z^du|+|\p^{\le3}P_{\neq0}Z^du|)\|_{L^2_{x,y}}\\
&\quad+\ve_1\||\p^{\le2}\p Z^cu||\p^{\le2}\p Z^du|\|_{L^2_{x,y}}+\ve_1\|\w{t+|x|}^{1/2}|\p^{\le3}P_{\neq0}Z^cu||\p^{\le3}P_{\neq0}Z^du|\|_{L^2_{x,y}}\\
&\ls\ve_1^2E_{2N}[u](t)\ls\ve_1^3\w{t}^{\ve_2}.
\end{split}
\end{equation}
When $|b|\ge N+3$, $|b|+|c|+|d|\le|a|\le2N-3$ ensures $|c|+|d|\le N-6$.
On the other hand, one can easily see that for any function $f$,
\begin{equation}\label{energy:V:pf5}
|Zf|\ls\w{t+|x|}|\p f|.
\end{equation}
For the terms in the first and second lines of \eqref{energy:V:pf3}, by \eqref{BA3}, \eqref{BA5}, \eqref{pw0:good:low}
and \eqref{pwKG:good},
we arrive at
\begin{equation*}
\begin{split}
&\quad|P_{=0}\bar\p \p^{\le2}Z^cu|(|\p^{\le2}P_{=0}\p Z^du|+|\p^{\le3}P_{\neq0}Z^du|)
+|\p^{\le2}P_{=0}\p Z^cu||P_{\neq0}\bar\p \p^{\le2}Z^du|\\
&\ls\ve_1^2\w{x}^{-1/2}\w{t+|x|}^{-1}(\w{x}^{-1/2}\w{t-|x|}^{-1.3}+\w{t+|x|}^{-1})\\
&\quad+\ve_1^2\w{x}^{-1/2}\w{t-|x|}^{-1.3}\w{t+|x|}^{-2}\w{t-|x|}\\
&\ls\ve_1^2\w{t+|x|}^{-2}.
\end{split}
\end{equation*}
This, together with \eqref{BA1} and \eqref{energy:V:pf5}, yields
\begin{equation}\label{energy:V:pf6}
\begin{split}
&\quad\|\w{t+|x|}|P_{=0}Z^bu||P_{=0}\bar\p \p^{\le2}Z^cu|(|\p^{\le2}P_{=0}\p Z^du|+|\p^{\le3}P_{\neq0}Z^du|)\|_{L^2(\R^2\times\T)}\\
&\quad+\|\w{t+|x|}|P_{=0}Z^bu||\p^{\le2}P_{=0}\p Z^cu||P_{\neq0}\bar\p \p^{\le2}Z^du|\|_{L^2(\R^2\times\T)}\\
&\ls\ve_1^2\sum_{|b'|\le|b|-1}\|P_{=0}\p Z^{b'}u\|_{L^2(\R^2\times\T)}\ls\ve_1^3\w{t}^{\ve_2}.
\end{split}
\end{equation}
Applying \eqref{Poincare:ineq}, \eqref{BA5} and \eqref{BA:pw:full} to the terms in the third line of \eqref{energy:V:pf3} leads to
\begin{equation}\label{energy:V:pf7}
\begin{split}
&\quad\|\w{t+|x|}|P_{\neq0}Z^bu||\p^{\le2}\p Z^cu||\p^{\le2}\p Z^du|\|_{L^2(\R^2\times\T)}\\
&\quad+\|\w{t+|x|}|P_{=0}\p Z^bu||\p^{\le3}P_{\neq0}Z^cu||\p^{\le3}P_{\neq0}Z^du|\|_{L^2(\R^2\times\T)}\\
&\ls\ve_1^2\|P_{\neq0}Z^bu\|_{L^2_{x,y}}+\ve_1^2\w{t}^{-1}\|P_{=0}\p Z^bu\|_{L^2_{x,y}}\\
&\ls\ve_1^2E_{2N}[u](t)\ls\ve_1^3\w{t}^{\ve_2}.
\end{split}
\end{equation}
Collecting \eqref{energy:V:pf2}-\eqref{energy:V:pf7} derives
\begin{equation}\label{energy:V:pf8}
\sum_{|a|\le2N-3}\|\w{t+|x|}Z^a(\cC_1^i+\cC_2^i)\|_{L^2(\R^2\times\T)}\ls\ve_1^3\w{t}^{\ve_2}.
\end{equation}

Next, we treat the higher order nonlinear terms $G^i=G_4^i(\p^{\le3}u)+\tilde G^i(\p^{\le4}u)$ in \eqref{QWE3}.
According to the definition \eqref{App:A3} of Appendix, one has
\begin{equation}\label{energy:V:pf9}
\begin{split}
|Z^aG_4^i(\p^{\le3}u)|&\ls\sum_{b+c+d+e\le a}|\p^{\le2}\p Z^bu||\p^{\le2}\p Z^cu|\\
&\times(|Z^du||Z^eu|+|Z^du||\p^{\le2}\p Z^eu|+|\p^{\le2}\p Z^du||\p^{\le2}\p Z^eu|).
\end{split}
\end{equation}

We firstly deal with the term $|\p^{\le2}\p Z^bu||\p^{\le2}\p Z^cu||Z^du||Z^eu|$
in the right hand side of \eqref{energy:V:pf9}.
By symmetry, one can assume $|b|\ge|c|$ and $|d|\ge|e|$ without loss of generality.
From $|b|+|c|+|d|+|e|\le|a|\le2N-3$, we have $|c|,|e|\le N-2$.
Thus, it follows from \eqref{BA:pw:full} that
\begin{equation}\label{energy:V:pf10}
|\p^{\le2}\p Z^cu||Z^eu|\ls\ve_1^2\w{t+|x|}^{-1}\w{t-|x|}^{-0.6}.
\end{equation}

When $|d|\le N+7$, applying \eqref{BA:pw:full} again yields
\begin{equation}\label{energy:V:pf11}
\begin{split}
&\sum_{\substack{|b|+|c|+|d|+|e|\le2N-3,\\|d|\le N+7}}\|\w{t+|x|}|\p^{\le2}\p Z^bu||\p^{\le2}\p Z^cu||Z^du||Z^eu|\|_{L^2(\R^2\times\T)}\\
&\ls\ve_1^3\sum_{|b|\le2N-3}\|\w{t+|x|}^{-1/2}\p^{\le2}\p Z^bu\|_{L^2(\R^2\times\T)}\\
&\ls\ve_1^3\w{t}^{-1/2}E_{2N}[u](t)\ls\ve_1^4\w{t}^{\ve_2-1/2},
\end{split}
\end{equation}
where \eqref{energy:V:pf10} has been used.

When  $|d|\ge N+8$, one has $|b|\le N-11$.
Note that for $x\in\supp(1-\chi(\frac{|x|}{\w{t}}))$, $\w{t-|x|}\approx\w{t+|x|}$ holds.
Then it can be concluded from \eqref{Hardy:ineq}, \eqref{mod:Hardy:ineq}, \eqref{BA:pw:full} and \eqref{energy:V:pf10} that
\begin{equation}\label{energy:V:pf12}
\begin{split}
&\sum_{\substack{|b|+|c|+|d|+|e|\le2N-3,\\|d|\ge N+8}}\|\w{t+|x|}|\p^{\le2}\p Z^bu||\p^{\le2}\p Z^cu||Z^du||Z^eu|\|_{L^2(\R^2\times\T)}\\
&\ls\ve_1^3\sum_{|d|\le2N-3}\|\w{t+|x|}^{-1/2}\w{t-|x}^{-0.9}Z^du\|_{L^2(\R^2\times\T)}\\
&\ls\ve_1^3\sum_{|d|\le2N-3}\Big(\w{t}^{-1/2}\Big\|\frac{\chi(\frac{|x|}{\w{t}})Z^du}{\w{t-|x|}^{0.9}}\Big\|_{L^2(\R^2\times\T)}
+\Big\|\frac{(1-\chi(\frac{|x|}{\w{t}}))Z^du}{|x|\ln|x|}\Big\|_{L^2(\R^2\times\T)}\Big)\\
&\ls\ve_1^3\Big(\ln(2+t)E_{2N}[u](t)+\sum_{|d|\le2N-3}\Big\|\frac{Z^du}{|x|\ln|x|}\Big\|_{L^2(\R^2\times\T)}\Big)\\
&\ls\ve_1^3\w{t}^{\ve_2}\ln(2+t),
\end{split}
\end{equation}
where $\chi$ is defined by \eqref{cutoff:def}.
In view of \eqref{BA2}, \eqref{BA3} and \eqref{BA5}, the estimates on the other terms in \eqref{energy:V:pf9} and $Z^a\tilde G^i(\p^{\le4}u)$
can be treated easily.
Then \eqref{energy:V:pf1} is achieved by \eqref{energy:V:pf8}, \eqref{energy:V:pf9}, \eqref{energy:V:pf11} and \eqref{energy:V:pf12}.

Secondly, we prove \eqref{energy:V} with the help of \eqref{energy:V:pf1}.
Due to
\begin{equation*}
\frac12\p_t(|\p Z^aV|^2)-\sum_{i=1}^3\p_i(\p_tZ^aV\p_iZ^aV)=\p_tZ^aV\Box Z^aV,
\end{equation*}
then by integrating this identity over $[0,t]\times\R^2\times\T$, one has that
\begin{equation}\label{energy:ineq}
\|\p Z^aV(t)\|^2_{L^2_{x,y}}\ls\|\p Z^aV(0)\|^2_{L^2_{x,y}}+\int_0^t\|\p_tZ^aV(s)\|_{L^2_{x,y}}\|Z^a\Box V(s)\|_{L^2_{x,y}}ds
\end{equation}
This, together with \eqref{initial:data}, \eqref{normal:form} and \eqref{energy:V:pf1}, yields
\begin{equation*}
\begin{split}
\big(E_{2N-3}[V](t)\big)^2
&\ls\big(E_{2N-3}[V](0)\big)^2+\ve_1^3\int_0^t(1+s)^{2\ve_2-1}ds\sup_{s\in[0,t]}E_{2N-3}[V](s)\\
&\ls(\ve^2+\ve_2^{-1}\ve_1^3(1+t)^{2\ve_2})\sup_{s\in[0,t]}E_{2N-3}[V](s).
\end{split}
\end{equation*}
Thus,  the estimate  $E_{2N-3}[V](t)$ in \eqref{energy:V} is proved.

Next, we estimate $\cX_{2N-3}[V](t)$.
Recall Lemma 5.1 of \cite{HTY24}, for smooth function $f(t,x)$, $x\in\R^2$, it holds that
\begin{equation*}
\|\w{t-|x|}\p_{t,x}^2f\|_{L^2(\R^2)}
\ls\sum_{|a|\le1}\|\p_{t,x}\Gamma^af\|_{L^2(\R^2)}+\|\w{t+|x|}\Box_{t,x}f\|_{L^2(\R^2)}
\end{equation*}
This, together with the Sobolev embedding theorem on $\T$, \eqref{proj:property} and \eqref{energy:V:pf1}, yields
\begin{equation*}
\cX_{2N-3}[V](t)\ls E_{2N-3}[V](t)+\sum_{|a|\le2N-3}\|\w{t+|x|}\Box Z^aV(t,x,y)\|_{L^2(\R^2\times\T)}\ls\ve_1\w{t}^{2\ve_2}.
\end{equation*}
Hence the proof of \eqref{energy:V} is completed.
\end{proof}

\begin{lemma}\label{lem:pw0:high}
Let $u(t,x,y)$ be the solution of \eqref{QWE} and suppose that \eqref{BA1}-\eqref{BA5} hold.
Then for any multi-index $a$ with $|a|\le2N-5$, one has
\begin{equation}\label{pw0:high}
|P_{=0}Z^au(t,x,y)|+\w{x}^{1/2}\w{t-|x|}^{1/2}|P_{=0}\p Z^au(t,x,y)|\ls\ve_1\w{t}^{2\ve_2}.
\end{equation}
\end{lemma}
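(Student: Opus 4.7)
The plan is to derive both bounds in \eqref{pw0:high} by applying Lemma \ref{lem:Sobolev} to the normal-form unknown $V$, transferring the resulting control from $V$ back to $u$ via the identity \eqref{normal:form}, and handling the non-derivative bound through the 2D Sobolev embedding from the proof of Lemma \ref{lem:pw0:high} combined with the Hardy-type inequalities of Section \ref{sect2-1}.

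I first establish the derivative estimate $\w{x}^{1/2}\w{t-|x|}^{1/2}|P_{=0}\p Z^a u|\ls \ve_1\w{t}^{2\ve_2}$. Applying Lemma \ref{lem:Sobolev} to $V$ together with \eqref{energy:V} immediately gives $\w{x}^{1/2}\w{t-|x|}^{1/2}|P_{=0}\p Z^a V|\ls\ve_1\w{t}^{2\ve_2}$ for $|a|\le 2N-5$. To transfer from $V$ to $u$, I use \eqref{normal:form} to write $u^i-V^i$ as a sum of quadratic terms $\p^au^j\p^bu^k$ (with $|a|+|b|\le 1$) and cubic terms $P_{\neq0}\p^au^j\, P_{\neq0}\p^bu^k\, P_{=0}u^l$. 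Applying $P_{=0}\p Z^a$, expanding via Leibniz, and invoking the product decomposition \eqref{proj:property} of $P_{=0}$, each summand becomes a product of factors controlled pointwise by \eqref{BA2}-\eqref{BA5} together with \eqref{BA:pw:full}, \eqref{pw0:good:low} and \eqref{pwKG:good}. A case analysis placing the top-order derivative on one factor and bounding the rest pointwise yields $|P_{=0}\p Z^a(u-V)|\ls \ve_1^2\w{x}^{-1/2}\w{t-|x|}^{-1/2}$, which is strictly subleading to the target and so closes the derivative part of \eqref{pw0:high}.

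For the non-derivative estimate $|P_{=0}Z^au|\ls\ve_1\w{t}^{2\ve_2}$, I apply the 2D Sobolev inequality \eqref{pw:high:pf1} at fixed $t,y$ to $g(x)=P_{=0}Z^au(t,x,y)$, obtaining $\w{x}^{1/2}|P_{=0}Z^au|\ls\sum_{|b|+|c|\le 2}\|\nabla_x^b\Omega^c P_{=0}Z^au(t,\cdot,y)\|_{L^2_x}$. Terms with $|b|\ge 1$ are controlled directly by $E_{|a|+2}[u]\ls\ve_1\w{t}^{\ve_2}$. The remaining terms $\|\Omega^c P_{=0}Z^au\|_{L^2_x}$ with $|c|\le 2$ admit no direct energy bound, so I split the $x$-domain using the cutoff $\chi(|x|/\w{t})$ from \eqref{cutoff:def}: on $\mathrm{supp}\,\chi$ the modified Hardy inequality \eqref{mod:Hardy:ineq} reduces the estimate to $\|\nabla_x\Omega^c P_{=0}Z^au\|_{L^2_x}\ls E_{|a|+3}[u]$ at a cost of $\w{t}^{1/2}\ln(2+t)$, while on the complement (where $\w{t-|x|}\approx\w{t+|x|}$) the weight $\w{t-|x|}^{-1/2}$ furnished by the derivative bound established in the previous step absorbs the extra time factor.

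The main obstacle is that our energies $E_k[u]$ control only $\p u$, so no direct $L^2_x$ bound on $u$ itself is at our disposal. The delicate point is to balance the $\w{t}^{1/2}\ln(2+t)$ loss inherent in the modified Hardy inequality \eqref{mod:Hardy:ineq} against the additional $\w{t-|x|}^{-1/2}$ decay in the pointwise bound on $P_{=0}\p Z^au$ derived from Lemma \ref{lem:Sobolev}. This balance is precisely what the cutoff $\chi(|x|/\w{t})$ realizes: the wave-zone annulus is where the Hardy loss occurs but also where the ghost weight provides extra decay, and the two effects cancel at the level $\w{t}^{2\ve_2}$ precisely because $\ve_2$ is taken very small.
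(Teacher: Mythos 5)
Your argument for the derivative bound is on the right track but already contains a subtle gap, and your argument for the non-derivative bound does not close. On the derivative side: when you apply $P_{=0}\p Z^a$ to $u-V$ with $|a|\le 2N-5$ and expand by Leibniz, the quadratic pieces $\p^{a'}u^j\p^{b'}u^k$ (with $|a'|+|b'|\le 1$) produce a non-differentiated factor $Z^{c}u^j$ whose multi-index $|c|$ can be as large as $2N-4$. The pointwise bounds \eqref{BA2}--\eqref{BA5}, \eqref{pw0:good:low}, \eqref{pwKG:good} cover $P_{=0}Z^cu$ only for $|c|\le N+7$, and the Sobolev-type bound \eqref{pw:high} yields decay in $\w{x}$ only for the \emph{derivative} $P_{=0}\p Z^cu$; there is no a priori pointwise control of the non-differentiated zero-mode $P_{=0}Z^cu$ at the intermediate orders $N+8\le |c|\le 2N-4$. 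The paper circumvents this by defining the auxiliary quantity $\cM(t)$ and treating the high-order non-derivative factor by $\cM(t)$ itself, producing a self-improving inequality $\cM(t)\ls \ve_1\w{t}^{2\ve_2}+\ve_1\w{t}^{-1/2}\cM(t)$ that is then absorbed. Your unconditional claim ``$|P_{=0}\p Z^a(u-V)|\ls\ve_1^2\w{x}^{-1/2}\w{t-|x|}^{-1/2}$'' silently skips this bootstrap.

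The non-derivative part has a more serious structural problem. The 2D Sobolev inequality \eqref{pw:high:pf1} forces you to bound $\|\Omega^cP_{=0}Z^au\|_{L^2_x}$, which, as you note, is not controlled by $E_k[u]$. However, both \eqref{Hardy:ineq} and \eqref{mod:Hardy:ineq} give control of a \emph{weighted-down} version of $g$ (with decaying weights $|x|^{-1}(\ln|x|)^{-1}$ or $\chi\,\w{t-|x|}^{-1/2-\lambda}$) in terms of $\|\nabla_xg\|_{L^2}$; they cannot be inverted to bound $\|g\chi\|_{L^2}$ or $\|g\|_{L^2}$ itself, so they do not supply the missing $b=c=0$ term in the Sobolev sum. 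Moreover, in the exterior region the derivative estimate on $P_{=0}\p Z^au$ cannot, by itself, ``absorb'' anything into an $L^2$ bound on the undifferentiated $P_{=0}Z^au$ without an integration step, and you never perform one. The paper's mechanism is genuinely different and much more direct: from $|P_{=0}\p Z^aV|\ls\ve_1\w{t}^{2\ve_2}\w{x}^{-1/2}\w{t-|x|}^{-1/2}$ one first integrates in time for $|x|\ge 2\w{t}$ (using the smallness \eqref{initial:data} of the data at $t=0$ as the boundary value and the fact that $\w{s-|x|}\approx\w{x}$ there), obtaining $|P_{=0}Z^aV|\ls\ve_1\w{t}^{2\ve_2}$ in the far field, and then integrates radially from $|x|=2\w{t}$ inward; the integral $\int_0^{2\w{t}}\w{r}^{-1/2}\w{t-r}^{-1/2}dr$ is $O(1)$. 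Transferring back to $u$ is then handled by the same $\cM(t)$ bootstrap. Without these two integration steps and the bootstrap, the non-derivative estimate cannot be obtained from the Sobolev/Hardy route you propose.
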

\begin{proof}
Set
\begin{equation}\label{pw0:pf1}
\cM(t):=\sum_{|a|\le2N-5}\sup_{x}(|P_{=0}Z^au(t,x,y)|+\w{x}^{1/2}\w{t-|x|}^{1/2}|P_{=0}\p Z^au(t,x,y)|).
\end{equation}
According to the definition \eqref{normal:form} and \eqref{proj:property}, we have
\begin{equation}\label{pw0:pf2}
\begin{split}
|P_{=0}Z^a(u-V)|&\ls\sum_{|b|+|c|\le|a|}(|\p^{\le1}P_{=0}Z^bu||\p^{\le1}P_{=0}Z^cu|
+\|\p^{\le1}P_{\neq0}Z^bu\|_{L_y^\infty}\|\p^{\le1}P_{\neq0}Z^cu\|_{L_y^\infty})\\
&\qquad+\sum_{|b|+|c|+|d|\le|a|}\|\p^{\le2}P_{\neq0}Z^bu\|_{L_y^\infty}\|\p^{\le2}P_{\neq0}Z^cu\|_{L_y^\infty}|P_{=0}Z^du|.
\end{split}
\end{equation}
It follows from \eqref{pw0:V1} and \eqref{energy:V} with $|a|\le2N-5$ that
\begin{equation}\label{pw0:pf3}
|P_{=0}\p Z^aV(t,x,y)|\ls\ve_1\w{t}^{2\ve_2}\w{x}^{-1/2}\w{t-|x|}^{-1/2}.
\end{equation}
Note that for $|x|\ge2\w{t}$, $\w{s-|x|}\approx\w{x}\ge\w{t}$ holds as $s\in[0,t]$.
By \eqref{initial:data} and \eqref{pw0:pf3}, one obtains that for $|x|\ge2\w{t}$,
\begin{equation}\label{pw0:pf4}
\begin{split}
\quad|P_{=0}Z^aV(t,x,y)|
&\le |P_{=0}Z^aV(0,x,y)|+\int_{0}^t|P_{=0}\p Z^aV(s,x,y)|ds\\
&\ls\ve+\ve_1\w{t}^{2\ve_2}\int_{0}^t\frac{ds}{\w{x}^{1/2}\w{s-|x}^{1/2}}\\
&\ls\ve_1\w{t}^{2\ve_2}.
\end{split}
\end{equation}
For $|x|<2\w{t}$ and $|x|\neq0$, we have from \eqref{pw0:pf3} and \eqref{pw0:pf4} that
\begin{equation}\label{pw0:pf5}
\begin{split}
|P_{=0}Z^aV(t,x,y)|&\ls|P_{=0}Z^aV(t,2\w{t}\frac{x}{|x|},y)|+\int_{|x|}^{2\w{t}}|P_{=0}\p Z^aV(t,r\frac{x}{|x|},y)|dr\\
&\ls\ve_1\w{t}^{2\ve_2}+\ve_1\w{t}^{2\ve_2}\int_0^{2\w{t}}\w{r}^{-1/2}\w{t-r}^{-1/2}dr\\
&\ls\ve_1\w{t}^{2\ve_2}+\ve_1\w{t}^{2\ve_2-1/2}\Big(\int_0^{\w{t}/2}\frac{dr}{\w{r}^{1/2}}
+\int_{\w{t}/2}^{2\w{t}}\frac{dr}{\w{t-r}^{1/2}}\Big)\\
&\ls\ve_1\w{t}^{2\ve_2}.
\end{split}
\end{equation}
Similarly, for $x=(x_1,x_2)=0$,
\begin{equation*}
\begin{split}
|P_{=0}Z^aV(t,0,0,y)|&\ls|P_{=0}Z^aV(t,2\w{t},0,y)|+\int_0^{2\w{t}}|P_{=0}\p_1Z^aV(t,x_1,0,y)|dx_1\\
&\ls\ve_1\w{t}^{2\ve_2}+\ve_1\w{t}^{2\ve_2}\int_0^{2\w{t}}\w{x_1}^{-1/2}\w{t-x_1}^{-1/2}dx_1\\
&\ls\ve_1\w{t}^{2\ve_2}.
\end{split}
\end{equation*}
This, together with \eqref{pw0:pf4} and \eqref{pw0:pf5}, yields
\begin{equation}\label{pw0:pf6}
\sum_{|a|\le2N-5}|P_{=0}Z^aV|\ls\ve_1\w{t}^{2\ve_2}.
\end{equation}

Next, we turn to the estimate on the right hand side of \eqref{pw0:pf2}.
Note that for the first line in \eqref{pw0:pf2}, $|b|+|c|\le|a|\le2N-5$ ensures $|b|\le N$ or $|c|\le N$.
Thus, collecting \eqref{BA1}, \eqref{BA2}, \eqref{BA3}, \eqref{BA5} and \eqref{pw:high} leads to
\begin{equation}\label{pw0:pf7}
\begin{split}
&\quad|\p^{\le1}P_{=0}Z^bu||\p^{\le1}P_{=0}Z^cu|
+\|\p^{\le1}P_{\neq0}Z^bu\|_{L_y^\infty}\|\p^{\le1}P_{\neq0}Z^cu\|_{L_y^\infty}\\
&\ls\ve_1\w{t}^{-1/2}\cM(t)+\ve_1^{2}\w{t}^{\ve_2-1}\w{x}^{-1/2}.
\end{split}
\end{equation}
Analogously,  for the second line in \eqref{pw0:pf2}, the same estimate as \eqref{pw0:pf7} can be obtained.
Therefore, by \eqref{pw0:pf2}, \eqref{pw0:pf6} and \eqref{pw0:pf7}, we have
\begin{equation}\label{pw0:pf8}
\sum_{|a|\le2N-5}|P_{=0}Z^au|\ls\ve_1\w{t}^{2\ve_2}+\ve_1\w{t}^{-1/2}\cM(t).
\end{equation}

Similarly to \eqref{pw0:pf2}, one has
\begin{equation}\label{pw0:pf9}
\begin{split}
|P_{=0}\p Z^a(u-V)|&\ls\sum_{|b|+|c|\le|a|}(|\p^{\le1}P_{=0}\p Z^bu||\p^{\le1}P_{=0}Z^cu|
+\|\p^{\le2}P_{\neq0}Z^bu\|_{L_y^\infty}\|\p^{\le1}P_{\neq0}Z^cu\|_{L_y^\infty})\\
&\quad+\sum_{|b|+|c|+|d|\le|a|}\|\p^{\le3}P_{\neq0}Z^bu\|_{L_y^\infty}\|\p^{\le3}P_{\neq0}Z^cu\|_{L_y^\infty}|\p^{\le1}P_{=0}Z^du|.
\end{split}
\end{equation}
It is noted that for the first line in \eqref{pw0:pf9},  $|b|\le N$ or $|c|\le N$ always hold.
When $|b|\le N$, it can be deduced from \eqref{BA3} and \eqref{pw0:pf1} that
\begin{equation}\label{pw0:pf10}
|\p^{\le1}P_{=0}\p Z^bu||\p^{\le1}P_{=0}Z^cu|\ls\ve_1\w{x}^{-1/2}\w{t-|x|}^{-1.3}\cM(t).
\end{equation}
When $|c|\le N$, \eqref{BA1}, \eqref{BA2} and \eqref{pw:high} imply
\begin{equation}\label{pw0:pf11}
|\p^{\le1}P_{=0}\p Z^bu||\p^{\le1}P_{=0}Z^cu|
\ls\ve_1^2\w{t+|x|}^{-1/2}\w{t-|x|}^{-0.3}\w{t}^{\ve_2}\w{x}^{-1/2}.
\end{equation}
For the last term in the first line of \eqref{pw0:pf9}, it follows from \eqref{BA1}, \eqref{BA5} and \eqref{pw:high} that
\begin{equation}\label{pw0:pf12}
\|\p^{\le2}P_{\neq0}Z^bu\|_{L_y^\infty}\|\p^{\le1}P_{\neq0}Z^cu\|_{L_y^\infty}
\ls\ve_1^2\w{t+|x|}^{-1}\w{t}^{\ve_2}\w{x}^{-1/2}
\ls\ve_1^2\w{x}^{-1/2}\w{t+|x|}^{\ve_2-1}.
\end{equation}
The estimate on the second line in \eqref{pw0:pf9} can be treated analogously.
Thus, we have from \eqref{pw0:pf1}, \eqref{pw0:pf3} and \eqref{pw0:pf8}-\eqref{pw0:pf12} that
\begin{equation*}
\cM(t)\ls\ve_1\w{t}^{2\ve_2}+\ve_1\w{t}^{-1/2}\cM(t).
\end{equation*}
Together with the smallness of $\ve_1$, this yields \eqref{pw0:high}.
\end{proof}

\begin{corollary}
Let $u(t,x,y)$ be the solution of \eqref{QWE} and suppose that \eqref{BA1}-\eqref{BA5} hold.
Then
\begin{equation}\label{pw0:good}
\sum_{|a|\le2N-6}|P_{=0}\bar\p Z^au(t,x,y)|\ls\ve_1\w{x}^{-1/2}\w{t+|x|}^{-1/2}\w{t}^{2\ve_2}.
\end{equation}
\end{corollary}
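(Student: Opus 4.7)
The plan is to deduce \eqref{pw0:good} directly from Lemma \ref{lem:pw0:high} (which is already available with the budget $|a|\le 2N-5$, one more than the $|a|\le 2N-6$ we need) by invoking the identity \eqref{good:ident}, i.e.\ $|x|\bar\p_i f=L_if+(|x|-t)\p_i f$, and splitting space-time into the two standard regions $|x|\le\w{t}/2$ and $|x|\ge\w{t}/2$. No further a priori estimates should be required.

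In the interior region $|x|\le\w{t}/2$, one has $\w{t-|x|}\approx\w{t+|x|}\approx\w{t}$, and since $|\bar\p_i f|\le|\p f|$ pointwise, I would simply bound
\[
|P_{=0}\bar\p Z^au|\le|P_{=0}\p Z^au|\ls\ve_1\w{t}^{2\ve_2}\w{x}^{-1/2}\w{t-|x|}^{-1/2}
\ls\ve_1\w{x}^{-1/2}\w{t+|x|}^{-1/2}\w{t}^{2\ve_2},
\]
using \eqref{pw0:high} with $|a|\le 2N-6\le 2N-5$. This handles the region completely.

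In the exterior region $|x|\ge\w{t}/2$, I would apply \eqref{good:ident} to $f=P_{=0}Z^au$ (note that $P_{=0}$ commutes with $L_i$ and $\p_\alpha$ by \eqref{proj:property}), giving
\[
|x||P_{=0}\bar\p Z^au|\ls\sum_{|b|\le|a|+1}|P_{=0}Z^bu|+\w{t-|x|}|P_{=0}\p Z^au|.
\]
Since $|a|+1\le 2N-5$, Lemma \ref{lem:pw0:high} controls the first sum by $\ve_1\w{t}^{2\ve_2}$ and bounds the second term by $\w{t-|x|}\cdot\ve_1\w{t}^{2\ve_2}\w{x}^{-1/2}\w{t-|x|}^{-1/2}=\ve_1\w{t}^{2\ve_2}\w{x}^{-1/2}\w{t-|x|}^{1/2}$. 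In this region $\w{t-|x|}\ls\w{x}$, so the second term is also $\ls\ve_1\w{t}^{2\ve_2}\w{x}^{1/2}$. Dividing by $|x|\approx\w{x}\approx\w{t+|x|}$ yields
\[
|P_{=0}\bar\p Z^au|\ls\ve_1\w{t}^{2\ve_2}\w{x}^{-1/2}\w{t+|x|}^{-1/2},
\]
as desired.

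There is no real obstacle here: the only mild subtlety is to observe that the loss of one derivative in passing from $\bar\p$ to $L$ via \eqref{good:ident} is precisely what forces the index range to shrink from $2N-5$ (in Lemma \ref{lem:pw0:high}) down to $2N-6$ in the corollary, and that in the exterior region $\w{t-|x|}^{1/2}\w{x}^{-1/2}\ls 1$ so no extra decay is lost. Combining the two regional bounds gives \eqref{pw0:good}.
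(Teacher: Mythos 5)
Your proof is correct and follows exactly the same route as the paper's: split into the regions $|x|\le\w{t}/2$ and $|x|\ge\w{t}/2$, use $|\bar\p f|\le|\p f|$ together with $\w{t-|x|}\approx\w{t+|x|}$ in the interior, and use the identity \eqref{good:ident} plus $\w{t-|x|}^{1/2}\w{x}^{-1/2}\ls1$ and $\w{x}\approx\w{t+|x|}$ in the exterior. The only difference is that you spell out the exterior-region algebra in a bit more detail than the paper does; no gaps.
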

\begin{proof}
\eqref{pw0:good} in the case of $|x|\le\w{t}/2$ can be obtained by \eqref{pw0:high}.

For $|x|\ge\w{t}/2$, it follows from \eqref{good:ident} and \eqref{pw0:high} that
\begin{equation*}
\begin{split}
\w{x}\sum_{|a|\le2N-6}|P_{=0}\bar\p Z^au(t,x,y)|
&\ls\sum_{|b|\le2N-5}|P_{=0}Z^bu|+\w{t-|x|}\sum_{|a|\le2N-6}|P_{=0}\p Z^au|\\
&\ls\ve_1\w{t}^{2\ve_2}.
\end{split}
\end{equation*}
This completes the proof of \eqref{pw0:good}.
\end{proof}

\subsection{Lower order pointwise estimates of the zero modes}
Based on the estimates in the last subsection, we are able to improve the pointwise estimates of the zero
modes in \eqref{BA2} and \eqref{BA3}.
First, recall two lemmas from \cite[Lemmas 4.1, 4.2]{Kubo19}.
\begin{lemma}\label{lem:improv1}
Let $w(t,x)$ be a solution of $\Box_{t,x}w=0$ with the initial data $(w,\p_tw)|_{t=0}=(f(x),g(x))$.
Then
\begin{equation*}
\begin{split}
\w{t+|x|}^{1/2}\w{t-|x|}^{1/2}|w(t,x)|&\ls\cA_{5/2,0}[f,g],\\
\w{t+|x|}^{1/2}\w{t-|x|}^{3/2}|\p w(t,x)|&\ls\cA_{7/2,1}[f,g],
\end{split}
\end{equation*}
where $\ds\cA_{\kappa,s}[f,g]:=\sum_{\tilde\Gamma\in\{\p_1,\p_2,\Omega\}}
(\sum_{|a|\le s+1}\|\w{z}^\kappa\tilde\Gamma^af(z)\|_{L^\infty}+\sum_{|a|\le s}\|\w{z}^\kappa\tilde\Gamma^ag(z)\|_{L^\infty})$.
\end{lemma}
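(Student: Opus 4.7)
The plan is to work from the explicit Poisson representation formula for the 2D linear wave equation,
\[
w(t,x)=\frac{1}{2\pi}\p_t\!\int_{|y|<t}\frac{f(x+y)}{\sqrt{t^2-|y|^2}}\,dy+\frac{1}{2\pi}\!\int_{|y|<t}\frac{g(x+y)}{\sqrt{t^2-|y|^2}}\,dy,
\]
and to derive both pointwise bounds by carefully integrating against the weights $\w{z}^{-5/2}$ (respectively $\w{z}^{-7/2}$) on the data that are built into the norms $\cA_{5/2,0}[f,g]$ and $\cA_{7/2,1}[f,g]$. The angular vector field $\Omega$ appearing in $\cA_{\kappa,s}$ will be used to integrate by parts in the angular variable and extract the sharp $\w{t-|x|}$ weight.

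First, I would pass to polar coordinates $y=r\omega$ with $\omega\in\SS^1$ and split the $(t,x)$ plane into three regions: the deep interior $|x|\le t/2$, the exterior $|x|\ge 2t$, and the transition region $t/2\le|x|\le 2t$. In the first two regions $\w{t-|x|}\approx\w{t+|x|}$, so the required product weight collapses to a single $\w{t}^{-1}$ factor; using the pointwise bound $|h(x+y)|\ls\w{x+y}^{-5/2}\cA_{5/2,0}[f,g]$ for $h\in\{f,g,\nabla f\}$, together with the elementary computation $\int_0^t r\w{r}^{-5/2}(t^2-r^2)^{-1/2}dr\ls\w{t}^{-1}$, these two cases are essentially immediate. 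In the transition region $t/2\le|x|\le 2t$, choose coordinates so that $x=(|x|,0)$ and parametrize $y=r(\cos\theta,\sin\theta)$. Since $\p_\theta[h(x+r\omega)]$ can be expressed in terms of $(\Omega h)(x+r\omega)$ plus a correction whose size is controlled by $\w{t-|x|}/r$ on the support of the integrand, one angular integration by parts exchanges a power of the data weight for the sharp factor $\w{t-|x|}^{1/2}\w{t+|x|}^{-1/2}$, yielding $|w(t,x)|\ls\w{t+|x|}^{-1/2}\w{t-|x|}^{-1/2}\cA_{5/2,0}[f,g]$.

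For the bound on $\p w$ I would differentiate the representation formula: $\p w$ then has the same structural form with $f,g$ replaced by expressions involving $\nabla f,\nabla g$ (accounted for by the $s=1$ tail of $\cA_{7/2,1}$), together with a boundary contribution arising from $\p_t(t^2-r^2)^{-1/2}$ that produces the extra $\w{t-|x|}^{-1}$ factor near the light cone. The same three-region decomposition and the same angular integration by parts then deliver the second claim. The principal obstacle throughout is the transition region $t/2\le|x|\le 2t$: producing the sharp factor of $\w{t-|x|}^{1/2}$ (and $\w{t-|x|}^{3/2}$ for $\p w$), rather than only a logarithmic gain, requires carrying the geometric identity between $r\p_\theta$ and $\Omega$ through the radial integral cleanly and absorbing the boundary term from differentiating the kernel; once this is set up, the remaining radial integrals are elementary and the two stated bounds follow.
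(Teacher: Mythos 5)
The paper does not prove Lemma \ref{lem:improv1}: it simply cites \cite[Lemmas 4.1, 4.2]{Kubo19} and moves on, so there is no internal proof to compare against, and your proposal must be judged on its own. Your general plan---2D Poisson representation, a decomposition by $|x|/t$, and angular averaging via $\Omega$ in the transition region---is a sensible and standard starting point, and the radial computations you cite for the easy regions are indeed routine. But at the two genuinely delicate points the argument has gaps. The claimed correction in the angular integration by parts is not of size $\w{t-|x|}/r$: with $x$ on the first axis and $y=r\omega(\theta)$, one computes
$\partial_\theta\bigl[h(x+r\omega)\bigr]=(\Omega h)(x+r\omega)-|x|\,(\partial_2 h)(x+r\omega)$,
so the error is a full first derivative of the data multiplied by $|x|$, which near the cone is of order $t$, not $\w{t-|x|}$. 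This can probably be controlled (the extra derivative is available in $\cA$, and $\partial_2 h$ at $z=x+y$ near the negative $x$-axis is essentially angular), but as stated the claim is wrong and the justification is missing.

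The more serious gap is the explanation for the extra $\w{t-|x|}^{-1}$ gain in the $\partial w$ estimate. Your scheme---``differentiate the representation formula, replace $f,g$ by $\nabla f,\nabla g$, plus a boundary term from $\partial_t(t^2-r^2)^{-1/2}$''---cannot produce it. Indeed $\partial_t w$ is itself a homogeneous solution with data $(g,\Delta f)$, and $\cA_{5/2,0}[g,\Delta f]\ls\cA_{7/2,1}[f,g]$, so a direct application of the first estimate to $\partial_t w$ (which is exactly what the ``replace $f,g$ by derivatives'' step buys after passing to the rescaled variable $\sigma=y/t$, which removes the apparent boundary singularity) only gives $|\partial_t w|\ls\w{t+|x|}^{-1/2}\w{t-|x|}^{-1/2}\cA_{7/2,1}[f,g]$, short of the target by a full factor of $\w{t-|x|}^{-1}$. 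The reason is structural: the near-cone contribution to the Poisson integral is dominated by $z=x+y$ near the origin, where extra polynomial decay of the data supplies no additional smallness; gaining the extra factor requires exploiting a cancellation tied to the radiation-field structure of $\partial w$, which your sketch does not identify. (And differentiating $(t^2-r^2)^{-1/2}$ through directly produces the non-integrable singularity $(t^2-r^2)^{-3/2}$ rather than a useful boundary term.) So what you have is a reasonable plan with the easy regions handled, but with the two hard steps---the sharp transition-region weight, and the $\w{t-|x|}^{-1}$ gain for $\partial w$---resting on intermediate claims that are either incorrect or unsupported.
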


\begin{lemma}\label{lem:improv2}
Let $w(t,x)$ be a solution of $\Box_{t,x}w=\cF(t,x)$ with zero initial data $(w,\p_tw)|_{t=0}=(0,0)$.
Then for $\rho\in(0,1/2)$ and $\kappa>0$, one has
\begin{equation*}
\begin{split}
\w{t+|x|}^{1/2}\w{t-|x|}^{\rho}|w(t,x)|&\ls\sup_{s,z}(\w{z}^{1/2}\cW_{1+\rho,1+\kappa}(s,z)|\cF(s,z)|),\\
\w{x}^{1/2}\w{t-|x|}^{1+\rho}|\p w(t,x)|&\ls\sum_{|a|\le1}\sup_{s,z}(\w{z}^{1/2}\cW_{1+\rho+\kappa,1}(s,z)|\Gamma^a\cF(s,z)|),
\end{split}
\end{equation*}
where $\cW_{\sigma,\lambda}(t,x):=\w{t+|x|}^\sigma(\min\{\w{x},\w{t-|x|}\})^\lambda$.
\end{lemma}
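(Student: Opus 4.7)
The plan is to base both estimates on the retarded fundamental solution of the $2$D wave operator. For $\Box_{t,x}w=\cF$ with zero data on $\R^{1+2}$, one has
\begin{equation*}
w(t,x)=\frac{1}{2\pi}\int_0^t\int_{|x-y|<t-s}\frac{\cF(s,y)}{\sqrt{(t-s)^2-|x-y|^2}}\,dy\,ds,
\end{equation*}
so once the hypothesized pointwise bound on $\cF$ is inserted, both conclusions reduce to an explicit weighted kernel estimate that is independent of $\cF$.

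To extract the weights $\w{t+|x|}^{1/2}\w{t-|x|}^{\rho}$ in the first inequality, I would decompose the backward solid cone from $(t,x)$ into a \emph{near-cone zone} where $\w{s-|y|}$ is the small variable and an \emph{interior zone} $|y|\ls s/2$. In the near-cone zone the transverse-decay factor $\w{s-|y|}^{-(1+\kappa)}$ from $\cW_{1+\rho,1+\kappa}$ is integrable in that single transverse variable, while the Abel-type singularity of the Kirchhoff kernel $((t-s)^2-|x-y|^2)^{-1/2}$ is absorbed by the hypothesis weight $\w{y}^{-1/2}$ after a change of variables in which $\sqrt{(t-s)^2-|x-y|^2}$ appears as a Jacobian along the cone. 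In the interior zone the time-decay factor $\w{s+|y|}^{-(1+\rho)}$ supplies $s$-integrability, and standard triangle-inequality comparisons between $\w{t\pm|x|}$ and $\w{s\pm|y|}$ produce the required $\w{t+|x|}^{-1/2}\w{t-|x|}^{-\rho}$ after patching.

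For the derivative estimate I would exploit that $\p_{t,x}$, $L=(L_1,L_2)$ and $\Omega$ all commute with $\Box_{t,x}$, so that each $\Gamma^a w$ solves $\Box_{t,x}\Gamma^a w=\Gamma^a\cF$ with zero data. Combining first-inequality-type bounds applied to $\Gamma^a w$ with the elementary pointwise identities that express $\p w$ in terms of $\{\Gamma^a w\}_{|a|\le 1}$ (trading a derivative for a factor $\w{t-|x|}^{-1}$ inside the cone via the boost $L$, and for a factor $\w{x}^{-1/2}$ outside) yields the advertised improvement. The shift of exponents from $\cW_{1+\rho,1+\kappa}$ to $\cW_{1+\rho+\kappa,1}$ in the hypothesis is precisely what balances this trade-off: one power of transverse decay is moved to radial decay so as to fuel the $\w{t-|x|}^{-1}$ gain in the conclusion.

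The main obstacle is the borderline integrability of the Kirchhoff kernel in two space dimensions, which is only marginally integrable. The assumptions $\rho<1/2$ and $\kappa>0$ and the prefactor $\w{y}^{1/2}$ on $\cF$ are all sharp and must be threaded carefully through the zone decomposition so that every $ds$-, $dr$- and angular integral converges with exactly the claimed rate. In particular $\rho<1/2$ is needed to avoid a logarithmic loss along the cone, and $\kappa>0$ is needed so that the transverse integration in the near-cone zone does not diverge at spatial infinity.
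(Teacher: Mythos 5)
You are reconstructing a result that the paper itself does not prove: the authors explicitly introduce this lemma with ``recall two lemmas from \cite[Lemmas 4.1, 4.2]{Kubo19}'' and cite Kubo's 2019 article without reproducing the argument. So there is no in-paper proof to compare against; the relevant comparison is with Kubo's proof, which is indeed a direct weighted estimate of the two-dimensional Duhamel integral with a cone/interior zone decomposition. Your plan is therefore pointed in the right general direction, but as written it has gaps that go beyond ``details to fill in.''

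First, the entire analytic content of the lemma is the ``explicit weighted kernel estimate'' that you defer in the opening sentence. Establishing that
$\int_0^t\int_{|x-y|<t-s}((t-s)^2-|x-y|^2)^{-1/2}\,\w{y}^{-1/2}\w{s+|y|}^{-(1+\rho)}\min(\w{y},\w{s-|y|})^{-(1+\kappa)}\,dy\,ds\ls\w{t+|x|}^{-1/2}\w{t-|x|}^{-\rho}$
with the precise exponents, uniformly in $t,x$, is exactly what Kubo does and is not elementary: one needs the four regime splits (near cone/interior in $y$, near/far in $s$ from the backward cone), and in each regime the surviving power of $\w{t\pm|x|}$ must be tracked; the $\w{y}^{-1/2}$ weight is not what ``absorbs'' the Abel singularity of the kernel --- that singularity is already locally integrable in $\R^2$ --- but rather supplies the $\w{t+|x|}^{-1/2}$ decay, and conflating the two is a sign the zone bookkeeping has not actually been done.

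Second, the derivative estimate does not follow by the route you propose. The claim that each $\Gamma^a w$ solves the same problem with zero data fails for $\Gamma^a=\p_t$: one has $\p_t w(0,\cdot)=0$ but $\p_t^2w(0,\cdot)=\cF(0,\cdot)\neq 0$, so a homogeneous correction (handled by the companion lemma) is needed. More importantly, the algebraic trade you invoke, $(t-|x|)\p_i f=|x|\bar\p_i f-L_i f$, does not express $\p w$ solely through $\{\Gamma^a w\}_{|a|\le1}$; it introduces $\bar\p w$, whose control is precisely what one is trying to establish. It also does not account for the change of the spherical weight from $\w{t+|x|}^{1/2}$ in the first estimate to $\w{x}^{1/2}$ in the second (a genuine gain of $\w{t}^{1/2}$ near the spatial origin), nor for the transfer of $\kappa$ from the transverse slot of $\cW$ to the temporal one. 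In Kubo's treatment the derivative estimate is obtained by differentiating the Duhamel representation (or using the representation of $\p w$) and redoing the weighted kernel estimate, not by commuting and algebra. To make your approach rigorous you would need to either carry out that second kernel computation or argue the derivative bound directly; as it stands, the proposal asserts the conclusion rather than derives it.
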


\begin{lemma}\label{lem:pw0:improv}
Let $u$ be the solution of \eqref{QWE} and suppose that \eqref{BA1}-\eqref{BA5} hold.
Then we have
\begin{equation}\label{pw0:improv}
\begin{split}
\w{t+|x|}^{1/2}\w{t-|x|}^{0.3}\sum_{|a|\le N+7}|P_{=0}Z^au(t,x,y)|&\ls\ve+\ve_1^2,\\
\w{x}^{1/2}\w{t-|x|}^{1.3}\sum_{|a|\le N+6}|P_{=0}\p Z^au(t,x,y)|&\ls\ve+\ve_1^2.
\end{split}
\end{equation}
\end{lemma}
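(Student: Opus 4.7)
The plan is to apply the two-dimensional linear wave pointwise estimates of Lemmas \ref{lem:improv1} and \ref{lem:improv2} to $P_{=0}Z^aV^i$, and then convert back to $P_{=0}Z^au$ via the inverse of the normal form \eqref{normal:form}. The key starting point is that $P_{=0}\p_y^2 = 0$, so applying $P_{=0}$ to \eqref{eqn:normal} yields the purely two-dimensional wave equation
\begin{equation*}
\Box_{t,x}\bigl(P_{=0}Z^aV^i\bigr)=P_{=0}\bigl(Z^a\cC_1^i+Z^a\cC_2^i+Z^aG^i\bigr),
\end{equation*}
with initial data controlled by \eqref{initial:data}. I would split $P_{=0}Z^aV^i=w_0+w_1$, where $w_0$ solves the homogeneous problem with the prescribed Cauchy data and $w_1$ is the Duhamel contribution with zero data.

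For $w_0$, Lemma \ref{lem:improv1} directly gives $\w{t+|x|}^{1/2}\w{t-|x|}^{0.3}|w_0| + \w{x}^{1/2}\w{t-|x|}^{1.3}|\p w_0|\ls\ve$ from the weighted $L^\infty$ bound on the data coming from \eqref{initial:data} (via Sobolev embedding on $\R^2\times\T$). For $w_1$, I would invoke Lemma \ref{lem:improv2} with $\rho=0.3$ and some small $\kappa>0$, which reduces matters to establishing the weighted pointwise bound
\begin{equation*}
\sup_{s,z}\sum_{|b|\le 1}\w{z}^{1/2}\cW_{1.3+\kappa,1}(s,z)\bigl|\Gamma^b P_{=0}\bigl(Z^a\cC_1^i+Z^a\cC_2^i+Z^aG^i\bigr)\bigr|\ls\ve_1^2.
\end{equation*}
This is the technical heart of the proof. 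For $Z^a\cC_1^i$, the partial null property \eqref{eqn:normal-1} combined with \eqref{null:structure} converts one derivative into a good derivative $\bar\p$; then \eqref{pw0:good} for $P_{=0}\bar\p Z^b u$, the higher-order zero-mode estimate \eqref{pw0:high}, and the non-zero-mode decays \eqref{BA5} and \eqref{pwKG:good} let each cubic product acquire the extra factor $\w{t+|x|}^{-1/2}\w{t-|x|}^{-1.3-\kappa}$ needed. For $Z^a\cC_2^i$, the first estimate in \eqref{null:structure} applied to the $Q_0$-factor, together with the $P_{=0}u^l$ or $P_{\neq 0}u^l$ prefactor, delivers the same gain; the Poincar\'e inequality \eqref{Poincare:ineq} is invoked whenever a non-zero mode appears. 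The remainder $Z^aG^i$ is at least quartic in $\p^{\le 4}u$, so \eqref{BA:pw:full}, \eqref{pw0:high} and \eqref{pw:high} give ample room.

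The final step is the passage from $V$ back to $u$: by \eqref{normal:form} and \eqref{proj:property}, $P_{=0}Z^a(u-V^{})$ is a finite sum of products of the form $P_{=0}(\p^{\le 1}Z^b u\,\p^{\le 1}Z^c u)$ and $P_{=0}(P_{\neq 0}\p^{\le 2}Z^b u\,P_{\neq 0}\p^{\le 2}Z^c u\,P_{=0}Z^d u)$ with the indices summing to at most $|a|$, and the same collection of bootstrap bounds gives pointwise control with strictly better decay than required. The main obstacle I anticipate is securing the $\w{t-|x|}^{-0.3}$ factor in the bound on $P_{=0}Z^au$: a generic cubic nonlinearity would forfeit this factor, so it is essential that the $\bar\p$ conversion from the partial null conditions is applied to the unbalanced factor in $\cC_1^i$, and that in $\cC_2^i$ the slowly-decaying $P_{=0}u^l$ is always the factor absorbed by the $Q_0$-null identity rather than left as an ordinary pointwise multiplier.
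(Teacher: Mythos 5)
Your overall strategy — apply Lemmas \ref{lem:improv1} and \ref{lem:improv2} to $P_{=0}Z^aV$, estimate the source using the partial null structure, and then convert back to $u$ via \eqref{normal:form} — is the same as the paper's, but there is a genuine gap: a single pass with $\rho=0.3$ at order $N+7$ does not close. The paper proves the lemma by a two-tier argument inside the lemma itself: first it applies Lemma \ref{lem:improv2} with the \emph{stronger} exponent $\rho=0.4$ (and $\kappa=0.01$) at the \emph{lower} order $|a|\le N-1$, obtaining $\sum_{|a|\le N-1}|P_{=0}Z^aV|\ls\ve_1\w{t+|x|}^{-1/2}\w{t-|x|}^{-0.4}$ and, after passing from $V$ to $u$, the key intermediate bound $\sum_{|a|\le N-1}|P_{=0}Z^au|\ls\ve_1\w{t+|x|}^{-1/2}\w{t-|x|}^{-0.4}$ (that is \eqref{pw0:low2}); only then does it run the $\rho=0.3$ estimate at order $N+7$, feeding in \eqref{pw0:low2} as an input. (This is exactly what the introduction warns about when it says that \eqref{hierarchy2b} and \eqref{hierarchy2c} ``need to be shown simultaneously \dots and cannot be treated separately''.)

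Why the intermediate $\w{t-|x|}^{-0.4}$ decay is indispensable: in bounding $Z^a\cC_2^i$ at order $|a|\le N+7$, the term $|P_{=0}Z^bu||\p^{\le2}P_{=0}\p Z^cu||\p^{\le3}P_{\neq0}Z^du|$ in \eqref{pw0:improv11} contains a sub-case with $|b|$ small, $|c|$ large enough that only the high-order bound \eqref{pw0:high} (which carries a $\w{t}^{2\ve_2}$ growth) is available for the second factor, and $|d|$ small. If you only have the bootstrap assumption \eqref{BA2}, i.e.\ $|P_{=0}Z^bu|\ls\ve_1\w{t+|x|}^{-1/2}\w{t-|x|}^{-0.3}$, the resulting product is of size $\ve_1^3\w{t}^{2\ve_2}\w{t+|x|}^{-3/2}\w{x}^{-1/2}\w{t-|x|}^{-0.8}$, and after multiplying by the required weight $\w{x}^{1/2}\cW_{1.3+\kappa,1}(t,x)$ one ends up with a factor $\w{t+|x|}^{2\ve_2+\kappa}$, which diverges for every $\kappa>0$. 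Replacing \eqref{BA2} by the improved intermediate bound $\w{t-|x|}^{-0.4}$ gains exactly the extra tenth of a power of $\w{t-|x|}$ that turns this into $\w{t+|x|}^{2\ve_2+\kappa-0.1}\ls 1$ for $\kappa=0.01$. So you must first establish the $\rho=0.4$ estimate at order $N-1$ and convert it to the $u$-level bound \eqref{pw0:low2} before the $\rho=0.3$ estimate at order $N+7$ can be closed; as written, your proposal stalls precisely on that $\cC_2^i$ sub-case.
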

\begin{proof}
To prove \eqref{pw0:improv}, we now establish the following estimate
\begin{equation}\label{pw0:improv1}
\sum_{|a|\le N-1}|P_{=0}Z^aV(t,x,y)|\ls\ve_1\w{t+|x|}^{-1/2}\w{t-|x|}^{-0.4}.
\end{equation}
Applying Lemmas \ref{lem:improv1}, \ref{lem:improv2} with $\kappa=0.01$, $\rho=0.4$ and $\rho=0.3$, respectively, to $\Box Z^aV$
derives
\begin{equation}\label{pw0:improv2}
\begin{split}
&\w{t+|x|}^{1/2}\w{t-|x|}^{0.4}\sum_{|a|\le N-1}|P_{=0}Z^aV|\\
&\ls\ve+\sup_{s,z}\sum_{|a|\le N-1}\w{z}^{1/2}\cW_{1.41,1}(s,z)|P_{=0}\Box Z^aV(s,z)|
\end{split}
\end{equation}
and
\begin{equation}\label{pw0:improv3}
\begin{split}
&\w{t+|x|}^{1/2}\w{t-|x|}^{0.3}\sum_{|a|\le N+7}|P_{=0}Z^aV|+\w{x}^{1/2}\w{t-|x|}^{1.3}\sum_{|a|\le N+6}|P_{=0}\p Z^aV|\\
&\ls\ve+\sup_{s,z}\sum_{|a|\le N+7}\w{z}^{1/2}\cW_{1.31,1}(s,z)|P_{=0}\Box Z^aV(s,z)|,
\end{split}
\end{equation}
where \eqref{initial:data} has been used.

Next, we deal with the right hand sides of \eqref{pw0:improv2} and \eqref{pw0:improv3}.
From \eqref{eqn:normal}, one has $\Box Z^aV^i=Z^a(\cC_1^i+\cC_2^i+G_4^i(\p^{\le3}u)+\tilde G^i(\p^{\le4}u))$.

We only deal with the term $\ds\sum_{j,k,l=1}^m\sum_{\alpha,\beta,\mu,\nu=0}^3\tilde Q_{ijkl}^{\alpha\beta\mu\nu}\p^2_{\alpha\beta}Z^bu^j\p_{\mu}Z^cu^k\p_{\nu}Z^du^l$ in $Z^a\cC_1^i$
since  the other term can be treated analogously.
It follows from \eqref{proj:property} that
\begin{equation}\label{pw0:improv4}
\begin{split}
&\p^2_{\alpha\beta}Z^bu^j\p_{\mu}Z^cu^k\p_{\nu}Z^du^l
=P_{=0}\p^2_{\alpha\beta}Z^bu^jP_{=0}\p_{\mu}Z^cu^kP_{=0}\p_{\nu}Z^du^l\\
&+P_{\neq0}\p^2_{\alpha\beta}Z^bu^jP_{=0}\p_{\mu}Z^cu^kP_{=0}\p_{\nu}Z^du^l
+P_{=0}\p^2_{\alpha\beta}Z^bu^jP_{\neq0}\p_{\mu}Z^cu^kP_{=0}\p_{\nu}Z^du^l\\
&+P_{=0}\p^2_{\alpha\beta}Z^bu^jP_{=0}\p_{\mu}Z^cu^kP_{\neq0}\p_{\nu}Z^du^l\\
&+P_{\neq0}\p^2_{\alpha\beta}Z^bu^jP_{\neq0}\p_{\mu}Z^cu^kP_{=0}\p_{\nu}Z^du^l
+P_{\neq0}\p^2_{\alpha\beta}Z^bu^jP_{=0}\p_{\mu}Z^cu^kP_{\neq0}\p_{\nu}Z^du^l\\
&+P_{=0}\p^2_{\alpha\beta}Z^bu^jP_{\neq0}\p_{\mu}Z^cu^kP_{\neq0}\p_{\nu}Z^du^l\\
&+P_{\neq0}\p^2_{\alpha\beta}Z^bu^jP_{\neq0}\p_{\mu}Z^cu^kP_{\neq0}\p_{\nu}Z^du^l.
\end{split}
\end{equation}
With respect to the last term in the first line of \eqref{pw0:improv4}, by \eqref{proj:property}, one has
\begin{equation*}
\begin{split}
\sum_{\alpha,\beta,\mu,\nu=0}^3\tilde Q_{ijkl,abcd}^{\alpha\beta\mu\nu}P_{=0}\p^2_{\alpha\beta}Z^bu^jP_{=0}\p_{\mu}Z^cu^kP_{=0}\p_{\nu}Z^du^l\\
=\sum_{\alpha,\beta,\mu,\nu=0}^2\tilde Q_{ijkl,abcd}^{\alpha\beta\mu\nu}P_{=0}\p^2_{\alpha\beta}Z^bu^jP_{=0}\p_{\mu}Z^cu^kP_{=0}\p_{\nu}Z^du^l.
\end{split}
\end{equation*}
This, together with \eqref{null:structure}, yields
\begin{equation}\label{pw0:improv5}
\begin{split}
&\Big|\sum_{\alpha,\beta,\mu,\nu=0}^3\tilde Q_{ijkl,abcd}^{\alpha\beta\mu\nu}P_{=0}\p^2_{\alpha\beta}Z^bu^jP_{=0}\p_{\mu}Z^cu^kP_{=0}\p_{\nu}Z^du^l\Big|\\
&\ls|P_{=0}\bar\p\p Z^bu^j||P_{=0}\p Z^cu^k||P_{=0}\p Z^du^l|+|P_{=0}\p^2Z^bu^j||P_{=0}\bar\p Z^cu^k||P_{=0}\p Z^du^l|\\
&\quad+|P_{=0}\p^2Z^bu^j|P_{=0}\p Z^cu^k||P_{=0}\bar\p Z^du^l|.
\end{split}
\end{equation}
Due to $|b|+|c|+|d|\le|a|\le N+7\le2N-7$, then there is at least two factors in the terms of the right hand side
of \eqref{pw0:improv5} that
fulfill the estimates \eqref{BA3} and \eqref{pw0:good:low}.
While, the remaining one factor satisfies the estimate \eqref{pw0:high} or \eqref{pw0:good}.
Hence,
\begin{equation}\label{pw0:improv6}
\begin{split}
&\sum_{|b|+|c|+|d|\le N+7}\Big|\sum_{\alpha,\beta,\mu,\nu=0}^3\tilde Q_{ijkl,abcd}^{\alpha\beta\mu\nu}
P_{=0}\p^2_{\alpha\beta}Z^bu^jP_{=0}\p_{\mu}Z^cu^kP_{=0}\p_{\nu}Z^du^l\Big|\\
&\ls\ve_1^3\w{t+|x|}^{2\ve_2-1/2}\w{x}^{-3/2}\w{t-|x|}^{-2.6}+\ve_1^3\w{t+|x|}^{2\ve_2-1}\w{x}^{-3/2}\w{t-|x|}^{-2.1}\\
&\ls\ve_1^3\w{t+|x|}^{2\ve_2-1}\w{x}^{-1}\w{t-|x|}^{-2.1}.
\end{split}
\end{equation}
For the terms from the second to fifth lines in \eqref{pw0:improv4},  by $N+7\le2N-10$,
it is easy to know that they contain at least two factors that fulfill \eqref{BA3} and \eqref{BA5},
and one remaining factor that satisfies \eqref{BA4} or \eqref{pw0:high}.
Then one has
\begin{equation}\label{pw0:improv7}
\begin{split}
&\sum_{|b|+|c|+|d|\le N+7}(|P_{\neq0}\p^2_{\alpha\beta}Z^bu^jP_{=0}\p_{\mu}Z^cu^kP_{=0}\p_{\nu}Z^du^l|\\
&\quad+|P_{=0}\p^2_{\alpha\beta}Z^bu^jP_{\neq0}\p_{\mu}Z^cu^kP_{=0}\p_{\nu}Z^du^l|
+|P_{=0}\p^2_{\alpha\beta}Z^bu^jP_{=0}\p_{\mu}Z^cu^kP_{\neq0}\p_{\nu}Z^du^l|)\\
&\ls\ve_1^3\w{x}^{-1}\w{t-|x|}^{-2.6}\w{t+|x|}^{\ve_2-1/2}(\w{t+|x|}^{-0.45}+\w{x}^{-1/2}\w{t-|x|}^{-0.4})\\
&\quad+\ve_1^3\w{t+|x|}^{2\ve_2-1}\w{x}^{-1}\w{t-|x|}^{-1.8}\\
&\ls\ve_1^3\w{x}^{-1}\w{t-|x|}^{-1.8}\w{t+|x|}^{\ve_2-0.95}
\end{split}
\end{equation}
and
\begin{equation}\label{pw0:improv8}
\begin{split}
&\sum_{|b|+|c|+|d|\le N+7}(|P_{\neq0}\p^2_{\alpha\beta}Z^bu^jP_{\neq0}\p_{\mu}Z^cu^kP_{=0}\p_{\nu}Z^du^l|\\
&\quad+|P_{\neq0}\p^2_{\alpha\beta}Z^bu^jP_{=0}\p_{\mu}Z^cu^kP_{\neq0}\p_{\nu}Z^du^l|
+|P_{=0}\p^2_{\alpha\beta}Z^bu^jP_{\neq0}\p_{\mu}Z^cu^kP_{\neq0}\p_{\nu}Z^du^l|)\\
&\ls\ve_1^3\w{x}^{-1/2}\w{t-|x|}^{-1.3}\w{t+|x|}^{\ve_2-3/2}(\w{t+|x|}^{-0.45}+\w{x}^{-1/2}\w{t-|x|}^{-0.4})\\
&\quad+\ve_1^3\w{x}^{-1/2}\w{t-|x|}^{-1/2}\w{t+|x|}^{2\ve_2-2}\\
&\ls\ve_1^3\w{x}^{-1/2}\w{t-|x|}^{-1/2}\w{t+|x|}^{\ve_2-1.95}.
\end{split}
\end{equation}
For the term in the last line of \eqref{pw0:improv4}, we can analogously obtain
\begin{equation}\label{pw0:improv9}
\begin{split}
&\sum_{|b|+|c|+|d|\le N+7}|P_{\neq0}\p^2_{\alpha\beta}Z^bu^jP_{\neq0}\p_{\mu}Z^cu^kP_{\neq0}\p_{\nu}Z^du^l|\\
&\ls\ve_1^3\w{t+|x|}^{\ve_2-5/2}(\w{t+|x|}^{-0.45}+\w{x}^{-1/2}\w{t-|x|}^{-0.4}).
\end{split}
\end{equation}
Collecting \eqref{pw0:improv4}-\eqref{pw0:improv9} with \eqref{proj:property} leads to
\begin{equation}\label{pw0:improv10}
\sum_{|a|\le N+7}\w{x}^{1/2}\w{t+|x|}^{1.41}\min\{\w{x},\w{t-|x|}\}|P_{=0}Z^a\cC_1^i|\ls\ve_1^3.
\end{equation}
For $Z^a\cC_2^i$, it follows from \eqref{energy:V:pf3} that
\begin{equation}\label{pw0:improv11}
\begin{split}
&|Z^a\cC_2^i|\ls\sum_{|b|+|c|+|d|\le|a|}\Big\{|P_{=0}Z^bu||\p^{\le2}P_{=0}\p Z^cu||P_{=0}\bar\p \p^{\le2}Z^du|\\
&+|P_{\neq0}Z^bu||\p^{\le2}P_{=0}\p Z^cu||\p^{\le2}P_{=0}\p Z^du|
+|P_{\neq0}Z^bu||\p^{\le2}P_{\neq0}\p Z^cu||\p^{\le2}P_{\neq0}\p Z^du|\\
&+|P_{\neq0}Z^bu||\p^{\le2}P_{\neq0}\p Z^cu||\p^{\le2}P_{=0}\p Z^du|
+|P_{=0}\p Z^bu||\p^{\le3}P_{\neq0}Z^cu||\p^{\le3}P_{\neq0}Z^du|\\
&+|P_{=0}Z^bu||\p^{\le2}P_{=0}\p Z^cu||\p^{\le3}P_{\neq0}Z^du|\Big\},
\end{split}
\end{equation}
where  \eqref{proj:property} has been used.

With respect to the term in the first line of \eqref{pw0:improv11}, when $|b|+|c|+|d|\le|a|\le N+7$,
then $|c|\le N-4$ or $|d|\le N-4$ holds and further \eqref{BA3} or \eqref{pw0:good:low} can be applied.
For the other left cases of $b, c$ and $d$, the estimate \eqref{pw0:high} or \eqref{pw0:good} hold.
Together with \eqref{BA2}, this derives
\begin{equation}\label{pw0:improv12}
\begin{split}
&\sum_{|b|+|c|+|d|\le N+7}|P_{=0}Z^bu||\p^{\le2}P_{=0}\p Z^cu||P_{=0}\bar\p \p^{\le2}Z^du|\\
&\ls\ve_1^3\w{t+|x|}^{2\ve_2-1}\w{x}^{-1}\w{t-|x|}^{-1.6}+\ve_1^3\w{t+|x|}^{2\ve_2-3/2}\w{x}^{-1}\w{t-|x|}^{-1.1}\\
&\ls\ve_1^3\w{t+|x|}^{2\ve_2-1}\w{x}^{-1}\w{t-|x|}^{-1.6}.
\end{split}
\end{equation}
The treatment on the terms in the second and third lines of \eqref{pw0:improv11} are the same
as in \eqref{pw0:improv7}-\eqref{pw0:improv9}, we then have
\begin{equation}\label{pw0:improv13}
\begin{split}
&\sum_{|b|+|c|+|d|\le N+7}\Big\{
|P_{\neq0}Z^bu||\p^{\le2}P_{=0}\p Z^cu||\p^{\le2}P_{=0}\p Z^du|
+|P_{\neq0}Z^bu||\p^{\le2}P_{\neq0}\p Z^cu||\p^{\le2}P_{\neq0}\p Z^du|\\
&+|P_{\neq0}Z^bu||\p^{\le2}P_{\neq0}\p Z^cu||\p^{\le2}P_{=0}\p Z^du|
+|P_{=0}\p Z^bu||\p^{\le3}P_{\neq0}Z^cu||\p^{\le3}P_{\neq0}Z^du|\Big\}\\
&\ls\ve_1^3\w{x}^{-1}\w{t-|x|}^{-1.8}\w{t+|x|}^{\ve_2-0.95}+\ve_1^3\w{x}^{-1/2}\w{t-|x|}^{-1/2}\w{t+|x|}^{\ve_2-1.95}\\
&\quad+\ve_1^3\w{t+|x|}^{\ve_2-5/2}(\w{t+|x|}^{-0.45}+\w{x}^{-1/2}\w{t-|x|}^{-0.4}).
\end{split}
\end{equation}
For the term in the last line of \eqref{pw0:improv11}, it can be deduced from \eqref{BA2}, \eqref{BA3} and \eqref{BA5} that
\begin{equation}\label{pw0:improv14}
\begin{split}
&\sum_{|b|+|c|+|d|\le N-1}|P_{=0}Z^bu||\p^{\le2}P_{=0}\p Z^cu||\p^{\le3}P_{\neq0}Z^du|\\
&\ls\ve_1^3\w{x}^{-1/2}\w{t-|x|}^{-1.6}\w{t+|x|}^{-3/2}.
\end{split}
\end{equation}
Collecting \eqref{pw0:improv11}-\eqref{pw0:improv14} yields
\begin{equation}\label{pw0:improv15}
\sum_{|a|\le N-1}\w{x}^{1/2}\w{t+|x|}^{1.41}\min\{\w{x},\w{t-|x|}\}|P_{=0}Z^a\cC_2^i|\ls\ve_1^3.
\end{equation}

Next, we handle $Z^aG_4^i(\p^{\le3}u)$ with $|a|\le N+7$.
It follows from \eqref{proj:property}, \eqref{BA4} and \eqref{pw0:high} that
\begin{equation}\label{pw0:improv16}
\sum_{|a|\le2N-9}|\p Z^au|\ls\ve_1\w{t+|x|}^{2\ve_2}\w{x}^{-1/2}\w{t-|x|}^{-0.3}.
\end{equation}
Applying \eqref{BA:pw:full} and \eqref{pw0:improv16} to \eqref{energy:V:pf9} derives
\begin{equation}\label{pw0:improv17}
\sum_{|a|\le N+7}|Z^aG_4^i(\p^{\le3}u)|\ls\ve_1^4\w{t+|x|}^{2\ve_2-3/2}\w{x}^{-1/2}\w{t-|x|}^{-1.2}.
\end{equation}
Analogously, $Z^a\tilde G^i(\p^{\le4}u)$ can be estimated as for \eqref{pw0:improv17}.

Therefore, \eqref{pw0:improv1} is achieved by \eqref{pw0:improv2}, \eqref{pw0:improv10}, \eqref{pw0:improv15} and \eqref{pw0:improv17}.

We now show that \eqref{pw0:improv1} still holds when $V$ is replaced by $u$.
By \eqref{BA2}, \eqref{BA3}, \eqref{BA5} and \eqref{pw0:pf2} with $|a|\le N-1$, we have
\begin{equation}\label{pw0:low1}
\begin{split}
|P_{=0}Z^a(u-V)|&\ls\ve_1^2\w{t+|x|}^{-1}\w{t-|x|}^{-0.6}+\ve_1^2\w{x}^{-1}\w{t-|x|}^{-2.6}+\ve_1^2\w{t+|x|}^{-2}\\
&\quad+\ve_1^3\w{t+|x|}^{-5/2}\w{t-|x|}^{-0.3}.
\end{split}
\end{equation}
This, together with \eqref{pw0:improv1}, leads to
\begin{equation}\label{pw0:low2}
\sum_{|a|\le N-1}|P_{=0}Z^au|\ls\ve_1\w{t+|x|}^{-1/2}\w{t-|x|}^{-0.4}.
\end{equation}
At last, we prove \eqref{pw0:improv}.
In view of \eqref{pw0:improv3}, \eqref{pw0:improv10}-\eqref{pw0:improv13} and \eqref{pw0:improv17}, it suffices to establish the estimate \eqref{pw0:improv14} for $|b|+|c|+|d|\le N+7$.
If $|b|\ge N$, from $|c|+|d|\le7$ and \eqref{BA2}, \eqref{BA3}, \eqref{BA5}, one then has
\begin{equation}\label{pw0:low3}
\begin{split}
&\sum_{\substack{|b|+|c|+|d|\le N+7,\\|b|\ge N}}|P_{=0}Z^bu||\p^{\le2}P_{=0}\p Z^cu||\p^{\le3}P_{\neq0}Z^du|\\
&\ls\ve_1^3\w{x}^{-1/2}\w{t-|x|}^{-1.6}\w{t+|x|}^{-3/2}.
\end{split}
\end{equation}
If $|b|\le N-1$, then it can be concluded from \eqref{BA3}, \eqref{BA4}, \eqref{BA5}, \eqref{pw0:high} and \eqref{pw0:low2} that
\begin{equation*}
\begin{split}
&\sum_{\substack{|b|+|c|+|d|\le N+7,\\|b|\le N-1}}|P_{=0}Z^bu||\p^{\le2}P_{=0}\p Z^cu||\p^{\le3}P_{\neq0}Z^du|\\
&\ls\ve_1^3\w{x}^{-1/2}\w{t-|x|}^{-1.7}\w{t+|x|}^{\ve_2-1}(\w{t+|x|}^{-0.45}+\w{x}^{-1/2}\w{t-|x|}^{-0.4})\\
&\quad+\ve_1^3\w{x}^{-1/2}\w{t-|x|}^{-0.9}\w{t+|x|}^{2\ve_2-3/2}\\
&\ls\ve_1^3\w{x}^{-1/2}\w{t-|x|}^{-0.9}\w{t+|x|}^{\ve_2-1.45}.
\end{split}
\end{equation*}
This, together with \eqref{pw0:improv11}-\eqref{pw0:improv13} and \eqref{pw0:low3}, yields
\begin{equation}\label{pw0:low4}
\sum_{|a|\le N+7}\w{x}^{1/2}\w{t+|x|}^{1.31}\min\{\w{x},\w{t-|x|}\}|P_{=0}Z^a\cC_2^i|\ls\ve_1^3.
\end{equation}
Thus, collecting \eqref{pw0:improv3}, \eqref{pw0:improv10}, \eqref{pw0:improv17} and \eqref{pw0:low4} shows
\begin{equation}\label{pw0:low5}
\begin{split}
\w{t+|x|}^{1/2}\w{t-|x|}^{0.3}\sum_{|a|\le N+7}|P_{=0}Z^aV|&\ls\ve+\ve_1^2,\\
\w{x}^{1/2}\w{t-|x|}^{1.3}\sum_{|a|\le N+6}|P_{=0}\p Z^aV|&\ls\ve+\ve_1^2.
\end{split}
\end{equation}
In addition, it follows from \eqref{BA2}-\eqref{BA5}, \eqref{pw0:high}, \eqref{pw0:pf2} and \eqref{pw0:low2} that
\begin{equation}\label{pw0:low6}
\begin{split}
&\quad\sum_{|a|\le N+7}|P_{=0}Z^a(u-V)|\\
&\ls\ve_1^2\w{t+|x|}^{-1}\w{t-|x|}^{-0.6}+\ve_1^2\w{t+|x|}^{2\ve_2-1/2}\w{x}^{-1/2}\w{t-|x|}^{-0.9}\\
&+\ve_1^2\w{t}^{2\ve_2}\w{x}^{-1}\w{t-|x|}^{-1.8}+\ve_1^2\w{t+|x|}^{\ve_2-3/2}(\w{t+|x|}^{-0.45}+\w{x}^{-1/2}\w{t-|x|}^{-0.4})\\
&+\ve_1^3\w{t+|x|}^{\ve_2-2}\w{t-|x|}^{-0.3}(\w{t+|x|}^{-0.45}+\w{x}^{-1/2}\w{t-|x|}^{-0.4})\\
&\ls\ve_1^2\w{t+|x|}^{2\ve_2-1}\w{t-|x|}^{-0.3}.
\end{split}
\end{equation}
Analogously, for \eqref{pw0:pf9} with $|a|\le N+6$, by \eqref{BA2}-\eqref{BA5}, \eqref{pw0:high} and \eqref{pw0:low2}, we have
\begin{equation}\label{pw0:low7}
\begin{split}
&\quad\sum_{|a|\le N+6}|P_{=0}\p Z^a(u-V)|\\
&\ls\ve_1^2\w{t+|x|}^{2\ve_2-1/2}\w{x}^{-1/2}\w{t-|x|}^{-0.9}
+\ve_1^2\w{t+|x|}^{-1}\w{x}^{-1/2}\w{t-|x|}^{-1.6}\\
&+\ve_1^2\w{t}^{2\ve_2}\w{x}^{-1}\w{t-|x|}^{-1.8}
+\ve_1^3\w{t+|x|}^{\ve_2-3/2}(\w{t+|x|}^{-0.45}+\w{x}^{-1/2}\w{t-|x|}^{-0.4})\\
&\ls\ve_1^2\w{x}^{-1/2}\w{t-|x|}^{2\ve_2-1.4}.
\end{split}
\end{equation}
Based on \eqref{pw0:low5}, \eqref{pw0:low6} and \eqref{pw0:low7}, we have finished the proof of Lemma \ref{lem:pw0:improv}.
\end{proof}

\section{Pointwise estimates of the non-zero modes}\label{sect4}

This section aims to improve the bootstrap assumptions \eqref{BA4} and \eqref{BA5}.

\subsection{Pointwise estimates for the solution of linear wave equation on $\R^2\times\T$}

\begin{lemma}\label{lem:pwKG}
Let $w(t,x,y)$ be the solution of $~\Box w=\cF(t,x,y)$, it holds that
\begin{equation}\label{pwKG1}
\begin{split}
\w{t+|x|}^{0.95}|P_{\neq0}w(t,x,y)|&\ls\sup_{\tau\in[0,t]}\sum_{|a|\le5}\w{\tau}^{-1/20}
\|\w{\tau+|x'|}P_{\neq0}Z^a\cF(\tau,x',y)\|_{L^2(\R^2\times\T)}\\
&\qquad+\sum_{|a|\le7}\|\w{x'}^2Z^aw(0,x',y)\|_{L^2(\R^2\times\T)}
\end{split}
\end{equation}
and
\begin{equation}\label{pwKG2}
\begin{split}
\w{t+|x|}|P_{\neq0}w(t,x,y)|&\ls\sup_{\tau\in[0,t]}\sum_{|a|\le5}\w{\tau}^{1/10}
\|\w{\tau+|x'|}P_{\neq0}Z^a\cF(\tau,x',y)\|_{L^2(\R^2\times\T)}\\
&\qquad+\sum_{|a|\le7}\|\w{x'}^2Z^aw(0,x',y)\|_{L^2(\R^2\times\T)}.
\end{split}
\end{equation}
\end{lemma}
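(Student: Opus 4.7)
The plan is to exploit the product structure of $\R^2\times\T$ via Fourier decomposition in the periodic variable $y$, thereby reducing the 3D linear wave equation to a countable family of 2D Klein--Gordon equations with uniformly bounded-below masses. Writing $w(t,x,y)=\sum_{n\in\Z}w_n(t,x)e^{iny\sqrt{-1}}$ and similarly for $\cF$, as in \eqref{reduction to WKG-00}, each non-zero mode satisfies
\begin{equation*}
(\p_t^2-\Delta_x+n^2)w_n=\cF_n,\qquad x\in\R^2,
\end{equation*}
with mass $|n|\ge 1$. This places the lemma squarely in the setting of Georgiev-type weighted $L^\infty$--$L^2$ estimates for 2D Klein--Gordon equations \cite{Georgiev92}, where the optimal pointwise decay is $\w{t+|x|}^{-1}$.

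First I would establish, for each $n\in\Z_*$, a per-mode pointwise bound of the form
\begin{equation*}
\w{t+|x|}^{\sigma}|v(t,x)| \ls \sum_{|a|\le k_0}\sup_{\tau}\w{\tau}^{\beta}\|\w{\tau+|x'|}\Gamma^a g(\tau,x')\|_{L^2(\R^2)} + (\text{data terms})
\end{equation*}
for $v$ solving $(\Box_{t,x}+m^2)v=g$ with $m\ge 1$, derived from Duhamel, dispersive bounds, and the commuting Minkowski vector fields $\Gamma=\{\p_{t,x},L,\Omega\}$. The two pairs $(\sigma,\beta)=(1,1/10)$ and $(\sigma,\beta)=(0.95,-1/20)$ required in the statement arise by interpolating between this optimal decay and a milder Klainerman--Sobolev bound: losing a factor $\w{t+|x|}^{-0.05}$ on the LHS buys a $\w{\tau}^{-1/20}$ gain on the RHS, and conversely for the other pair.

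Next, I would pass from per-mode bounds to the full non-zero projection via the Sobolev embedding $H^1(\T)\hookrightarrow L^\infty(\T)$ combined with the Poincar\'e inequality \eqref{Poincare:ineq}, which give
\begin{equation*}
|P_{\neq 0}w(t,x,y)| \ls \|\p_y P_{\neq 0}w(t,x,\cdot)\|_{L^2(\T)}.
\end{equation*}
Since $\p_y\in Z$ and $(\p_y f)_n=in f_n$, Parseval in $y$ converts the per-mode $L^2(\R^2)$ norms of $\cF_n$ (with one additional $\p_y$) into the full $L^2(\R^2\times\T)$ norm of $\p_y\cF$, absorbing one extra vector field into the stated budget $|a|\le 5$ (respectively $|a|\le 7$ for the data), with the factor of $|n|$ required for $\ell^2_n$-summability being supplied exactly by that extra $\p_y$.

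The main obstacle is that a naive mode-by-mode estimate followed by $\ell^2_n$-summation clashes with the $\sup_\tau$ structure on the right-hand side, since $\sum_n(\sup_\tau f_n)^2 \neq \sup_\tau\sum_n f_n^2$ in general. To circumvent this, I would run the Georgiev hyperboloidal-foliation argument directly on $P_{\neq 0}w$ viewed as an $\ell^2_n$-valued function on $\R^{1+2}$, treating the mass as the non-negative operator $-\p_y^2$ which commutes with every element of $\Gamma$. Equivalently, one performs the hyperboloidal energy and Sobolev embeddings directly on $\R^2\times\T$, where $\p_y$ plays the role of the (operator-valued) mass and all norms remain at the $L^2(\R^2\times\T)$ level throughout, so the sup and the sum never have to be interchanged.
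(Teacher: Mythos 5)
Your high-level route matches the paper's: Fourier-decompose in $y$, treat each non-zero mode as a 2D Klein--Gordon equation with mass $|n|$, apply a weighted $L^\infty$--$L^2$ estimate mode by mode, and return to $P_{\neq0}w$ via $|P_{\neq0}w|\ls\|\p_yP_{\neq0}w\|_{L^2_y}$ and Parseval. But there is a genuine gap in the core step: you never track how the constant in the per-mode Klein--Gordon estimate depends on the mass $m=|n|$. The paper handles this by the rescaling $(s,z)=(|n|t,|n|x)$, applying the fixed mass-one bounds \eqref{pwKG:pf1}--\eqref{pwKG:pf2} in $(s,z)$, and undoing the scaling; this produces, explicitly, a factor $|n|$ on the data term and $|n|^{1/10}$ on the forcing term in the second per-mode bound \eqref{pwKG:pf4}. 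The factor $|n|$ on the data is precisely why the budget jumps from $|a|\le5$ per mode to $|a|\le7$ on $\R^2\times\T$: two extra $\p_y$'s, not one, are needed to absorb it. With a constant merely ``uniform in $m\ge1$,'' which you implicitly assume, one extra $\p_y$ would give $|a|\le6$, not the stated $7$, so your derivative bookkeeping does not close. Likewise, the ``interpolation'' story you give for the pairs $(\sigma,\beta)=(1,1/10)$ and $(0.95,-1/20)$ is heuristic; in the paper these are two independent estimates quoted from \cite{HTY24}, not interpolants of a single dispersive bound.

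Your observation that summing the per-mode bound over $n$ does not commute with the $\sup_\tau$ on the right-hand side is a legitimate technical concern, and the paper's written proof is in fact somewhat cavalier about this exact interchange when it applies \eqref{pwKG:pf4} mode by mode and then sums. Your proposed workaround --- running the hyperboloidal argument directly on $\R^2\times\T$ with $-\p_y^2$ as an operator-valued mass, so that all norms remain $L^2(\R^2\times\T)$ throughout and the $\sup_\tau$ never meets an $\ell^2_n$ sum --- is a different and potentially cleaner route, but as written it is only a sketch: no vector-field energy hierarchy, no Sobolev embedding on hyperboloids, and no check that the weights survive the transition to the product manifold. To stand as a proof of the lemma you would need to carry out that vector-valued Georgiev argument in detail, and to reproduce the stated derivative counts you would still have to build in, one way or another, the $|n|$-dependent prefactors that the paper obtains from scaling.
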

\begin{proof}
At first, we recall two estimates in \cite{HTY24}.
Let $v(t,x)$ be the solution of $\Box_{t,x}v+v=\tilde\cF(t,x)$.
By a minor modification on the proof of  Lemma 4.4 in \cite{HTY24}, we can obtain
\begin{equation}\label{pwKG:pf1}
\begin{split}
\w{t+|x|}^{0.95}|v(t,x)|&\ls\sup_{\tau\in[0,t]}\sum_{|a|\le4}\w{\tau}^{-1/20}
\|\w{\tau+|x'|}\Gamma^a\tilde\cF(\tau,x')\|_{L^2(\R^2)}\\
&\quad+\sum_{|a|\le5}\|\w{x'}^2\Gamma^av(0,x')\|_{L^2(\R^2)}
\end{split}
\end{equation}
and
\begin{equation}\label{pwKG:pf2}
\begin{split}
\w{t+|x|}|v(t,x)|&\ls\sup_{\tau\in[0,t]}\sum_{|a|\le4}\w{\tau}^{1/10}
\|\w{\tau+|x'|}\Gamma^a\tilde\cF(\tau,x')\|_{L^2(\R^2)}\\
&\quad+\sum_{|a|\le5}\|\w{x'}^2\Gamma^av(0,x')\|_{L^2(\R^2)}.
\end{split}
\end{equation}
In addition, taking the Fourier transformation on the both sides of $\Box w=\cF$ with respect to $y$ yields
\begin{equation}\label{pwKG:pf3}
(\Box_{t,x}+|n|^2)w_n(t,x)=\cF_n(t,x).
\end{equation}
Let $(s,z)=(|n|t,|n|x)$, $\Box_{s,z}=\p_s^2-\Delta_z$ and $\tilde{w}_n(s,z):=(Z^au)_n(\frac{s}{|n|},\frac{z}{|n|})$.
Then \eqref{pwKG:pf3} is changed to
\begin{equation*}
|n|^2(\Box_{s,z}+1)\tilde{w}_n(s,z)=\tilde{\cF}_n(s,z):=\cF_n(\frac{s}{|n|},\frac{z}{|n|}).
\end{equation*}
Therefore, it is derived from \eqref{pwKG:pf1} and \eqref{pwKG:pf2} that
\begin{equation*}
\begin{split}
\w{s+|z|}^{0.95}|\tilde{w}_n(s,z)|
&\ls|n|^{-2}\sup_{\tau\in[0,s]}\sum_{|a|\le4}\sum_{{\tilde\Gamma}\in\{\p_{\tau,z'},L,\Omega\}}
\w{\tau}^{-1/20}\|\w{\tau+|z'|}{\tilde\Gamma}^a\tilde{\cF}_n(\tau,z')\|_{L_{z'}^2(\R^2)}\\
&\quad+|n|^{-2}\sum_{|a|\le5}\sum_{\tilde\Gamma\in\{\p_{\tau,z'},L,\Omega\}}
\|\w{z'}^2\tilde\Gamma^a\tilde{w}_n(0,z')\|_{L_{z'}^2(\R^2)},\\
\w{s+|z|}|\tilde{w}_n(s,z)|
&\ls|n|^{-2}\sup_{\tau\in[0,s]}\sum_{|a|\le4}\sum_{\tilde\Gamma\in\{\p_{\tau,z'},L,\Omega\}}
\w{\tau}^{1/10}\|\w{\tau+|z'|}{\tilde\Gamma}^a\tilde{\cF}_n(\tau,z')\|_{L_{z'}^2(\R^2)}\\
&\quad+|n|^{-2}\sum_{|a|\le5}\sum_{\tilde\Gamma\in\{\p_{\tau,z'},L,\Omega\}}
\|\w{z'}^2\tilde\Gamma^a\tilde{w}_n(0,z')\|_{L_{z'}^2(\R^2)},
\end{split}
\end{equation*}
where $\tilde\Gamma'$s are the vector fields corresponding to $(\tau,z')$ variables.
Note that $\w{t+|x|}\ls\w{n(t+|x|)}=\w{s+|z|}$, and the vectors $L, \Omega$ are homogeneous
in $(s,z)$ and scaling invariant.
Then returning to the $(t,x)$ variables, we find that
\begin{equation}\label{pwKG:pf4}
\begin{split}
\w{t+|x|}^{0.95}|w_n(t,x)|&\ls\sup_{\tau\in[0,t]}\sum_{|a|\le4}
\w{\tau}^{-1/20}\|\w{\tau+|x|}\Gamma^a\cF_n(\tau, x)\|_{L^2_x(\R^2)}\\
&\quad+|n|\sum_{|a|\le 5}\|\w{x}^2\Gamma^aw_n(0,x)\|_{L^2_x(\R^2)},\\
\w{t+|x|}|w_n(t,x)|&\ls|n|^{1/10}\sup_{\tau\in[0,t]}\sum_{|a|\le 4}
\w{\tau}^{1/10}\|\w{\tau+|x|}\Gamma^a\cF_n(\tau, x)\|_{L^2_x(\R^2)}\\
&\quad+|n|\sum_{|a|\le5}\|\w{x}^2\Gamma^aw_n(0,x)\|_{L^2_x(\R^2)}.
\end{split}
\end{equation}
On the other hand, it is easy to find that for $C^2$ function $f(t,x,y)$ and $n\in\Z_*$,
\begin{equation*}
|f_n(t,x)|\ls|n|^{-1}\|P_{\neq0}\p_yf(t,x,y)\|_{L^2(\T)},\quad|f_n(t,x)|\ls|n|^{-2}\|P_{\neq0}\p_y^2f(t,x,y)\|_{L^2(\T)}.
\end{equation*}
This, together with \eqref{pwKG:pf4}, yields
\begin{equation*}
\begin{split}
&\w{t+|x|}^{0.95}|P_{\neq0}w(t,x,y)|\ls\w{t+|x|}^{0.95}\sum_{n\in\Z_*}|w_n(t,x)|\\
&\ls\sup_{\tau\in[0,t]}\sum_{|a|\le5}\w{\tau}^{-1/20}\|\w{\tau+|x'|}P_{\neq0}Z^a\cF(\tau,x',y)\|_{L^2(\R^2\times\T)}\\
&\quad+\sum_{|a|\le7}\|\w{x'}^2Z^aw(0,x',y)\|_{L^2(\R^2\times\T)}
\end{split}
\end{equation*}
and
\begin{equation*}
\begin{split}
\w{t+|x|}|P_{\neq0}w(t,x,y)|
&\ls\sup_{\tau\in[0,t]}\sum_{|a|\le5}\w{\tau}^{1/10}\|\w{\tau+|x'|}P_{\neq0}Z^a\cF(\tau,x',y)\|_{L^2(\R^2\times\T)}\\
&\quad+\sum_{|a|\le7}\|\w{x'}^2Z^aw(0,x',y)\|_{L^2(\R^2\times\T)}.
\end{split}
\end{equation*}
Therefore, \eqref{pwKG1} and \eqref{pwKG2} are proved.
\end{proof}

\subsection{Pointwise estimates of the non-zero modes}
In order to apply Lemma \ref{lem:pwKG} for $Z^aV$, we need establish such an estimate of $\Box Z^aV$ as:
\begin{lemma}\label{lem:NL:L2}
Let $V$ be defined by \eqref{normal:form} and suppose that \eqref{BA1}-\eqref{BA5} hold.
Then it holds that
\begin{equation}\label{NL:L2}
\sum_{|a|\le N+7}\|\w{t+|x|}P_{\neq0}\Box Z^aV(t,x,y)\|_{L^2(\R^2\times\T)}\ls\ve_1^3\w{t}^{-0.2}.
\end{equation}
\end{lemma}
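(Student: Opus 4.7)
The plan is to substitute $\Box Z^a V^i = Z^a\cC_1^i + Z^a\cC_2^i + Z^aG^i$ from \eqref{eqn:normal} and estimate each piece in the weighted $L^2_{x,y}$ norm after applying $P_{\neq0}$. The crucial mechanism is that $P_{\neq0}$ annihilates any product whose factors are all zero modes (such products depend only on $(t,x)$). Hence every surviving contribution carries at least one non-zero-mode factor, and a non-zero-mode factor of low derivative count provides the pointwise decay $\w{t+|x|}^{-1}$ via \eqref{BA5}, which precisely absorbs the weight $\w{t+|x|}$ on the left-hand side. The remaining margin $\w{t}^{-0.2}$ is then supplied by the other factors via the sharp pointwise bounds of Section~\ref{sect33}.

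Expanding $Z^a$ by the Leibniz rule, a generic term of $Z^a\cC_1^i$ is the cubic product $\p^{\le1}\p Z^b u \cdot \p Z^c u \cdot \p Z^d u$ with $|b|+|c|+|d|\le |a|\le N+7$. Ordering $|b|\ge|c|\ge|d|$, pigeonhole yields $|c|,|d|\le (N+7)/2\le N+6$, so the two lower-order factors fall within the range of every pointwise estimate established in Section~\ref{sect33}. I will place the $b$-factor in $L^2$ (controlled by $E_{2N}[u]\ls\ve_1\w{t}^{\ve_2}$ via \eqref{BA1}) and the other two in $L^\infty_{x,y}$. Further splitting each factor as $P_{=0}+P_{\neq0}$, the all-$P_{=0}$ contribution vanishes under $P_{\neq0}$; in each remaining case a non-zero-mode factor contributes $\ve_1\w{t+|x|}^{-1}$ via \eqref{BA5} (or its $\p$-differentiated counterpart, since $\p$ commutes with $P_{\neq0}$), a zero-mode factor contributes $\ve_1\w{x}^{-1/2}\w{t-|x|}^{-1.3}$ via \eqref{BA3}, and, for the $(t,x_1,x_2)$-part of the null structure \eqref{eqn:normal-1}, one replaces $\p$ by $\bar\p$ on some factor via \eqref{null:structure} and appeals to \eqref{pw0:good:low} for an additional factor $\w{t+|x|}^{-1}$. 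The $\p_y$-contributions (i.e. $\alpha,\beta,\mu,\nu=3$) are automatically non-zero modes because $P_{=0}\p_y=0$, so they inherit the $\w{t+|x|}^{-1}$ decay directly. A bookkeeping of the decay rates shows that in every allocation the product of the three factors times the weight $\w{t+|x|}$ is bounded by $\ve_1^3\w{t}^{\ve_2-1/2}$ or better, which comfortably beats $\ve_1^3\w{t}^{-0.2}$.

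The term $Z^a\cC_2^i$ is handled identically, using the pointwise inequality \eqref{energy:V:pf3}, whose right-hand side already distinguishes a good derivative $\bar\p$ or a non-zero-mode factor in every summand; and $Z^aG^i$, being quartic or higher in $u$, gains an extra factor $\ve_1\w{t+|x|}^{-1/2}\w{t-|x|}^{-0.3}$ from \eqref{BA:pw:full}, so the target bound is immediate there. The main technical obstacle will be the delicate case in $Z^a\cC_1^i$ where the top-order factor placed in $L^2$ happens to be the unique non-zero-mode factor: then the pointwise decay $\w{t+|x|}^{-1}$ that would absorb the weight is no longer available from $L^\infty$. To close the argument in this situation I will rely instead on the null structure of $\cC_1^i$, which forces a good derivative $\bar\p$ on one of the two $L^\infty$ zero-mode factors, so that \eqref{pw0:good:low} yields the required $\w{x}^{-1/2}\w{t+|x|}^{-1}\w{t-|x|}^{-0.3}$ pointwise decay; multiplying by $\ve_1\w{x}^{-1/2}\w{t-|x|}^{-1.3}$ from \eqref{BA3} on the remaining zero-mode factor and by the $L^2$ bound $\ve_1\w{t}^{\ve_2}$ on the top-order factor gives $\ve_1^3\w{t}^{\ve_2-1}$, well within the target. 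Since $\ve_2=1/100\ll 0.2$, every case closes with substantial room to spare.
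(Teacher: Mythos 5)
Your overall decomposition ($\Box Z^aV^i=Z^a\cC_1^i+Z^a\cC_2^i+Z^aG^i$), the observation that $P_{\neq0}$ annihilates the all-zero-mode pieces, and the pigeonhole forcing $|c|,|d|\le(N+7)/2\le N+1$ are all sound, and the latter observation is a genuine simplification the paper does not bother to exploit (the paper simply bounds $\|\w{t+|x|}\Box Z^aV\|_{L^2}$ without the projection, estimating the all-$P_{=0}$ piece via the null form). However, there is a genuine gap in the way you propose to close the \emph{delicate case} of $Z^a\cC_1^i$, i.e.\ the case where the top-order $b$-factor, which you have fixed in $L^2$, is the unique non-zero-mode factor and $c,d$ are zero modes.

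You claim the null structure ``forces a good derivative $\bar\p$ on one of the two $L^\infty$ zero-mode factors.'' It does not: the null estimate \eqref{null:structure} yields a \emph{sum} of three terms in which $\bar\p$ falls on each of the three factors in turn, and the term
\[
|P_{\neq0}\bar\p\p Z^bu|\,|P_{=0}\p Z^cu|\,|P_{=0}\p Z^du|
\]
places the good derivative on the very factor you committed to $L^2$. Since $|\bar\p f|\le|\p f|$, the good derivative buys nothing in $L^2$, and the two zero-mode factors only give, via \eqref{BA3}, $\ve_1^2\w{x}^{-1}\w{t-|x|}^{-2.6}$; taking $\sup_x$ of $\w{t+|x|}\w{x}^{-1}\w{t-|x|}^{-2.6}$ near $|x|\approx t$ produces a quantity of size $O(1)$, so the contribution is only $O(\ve_1^3\w{t}^{\ve_2})$ — no decay, far from the target $\w{t}^{-0.2}$. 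The same issue recurs when $\p_y$ falls on the $b$-factor: that factor is then automatically a non-zero mode, but since it sits in $L^2$ the $\w{t+|x|}^{-1}$ decay of \eqref{BA5} is unavailable.

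The fix — which is exactly what the paper does in \eqref{NL:L2:pf2} — is not to insist on putting the $b$-factor in $L^2$. Since $|b|+2\le N+9\le 2N-8$ for $N\ge17$, the wide-range bootstrap bound \eqref{BA4} gives the \emph{top-order} non-zero-mode factor a pointwise decay $\ve_1\w{t+|x|}^{\ve_2-1/2}(\w{t+|x|}^{-0.45}+\w{x}^{-1/2}\w{t-|x|}^{-0.4})$ in $L^\infty$. Pairing this with \eqref{BA3} on one zero-mode factor in $L^\infty$ and placing the remaining (lower-order) zero-mode factor in $L^2$ absorbs the weight and gives $\ls\ve_1^3\w{t}^{2\ve_2-0.4}$, with no need for the null structure on the mixed pieces. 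Your plan as written, with the fixed choice ``$b$-factor in $L^2$ always,'' cannot be closed by the null structure alone and must be amended to allow the $L^\infty$ bound \eqref{BA4} on the top-order non-zero-mode factor.
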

\begin{proof}
According to \eqref{eqn:normal}, one has $\Box Z^aV^i=Z^a\cC_1^i+Z^a\cC_2^i+Z^aG^i$.

\vskip 0.1 true cm
{\bf $\bullet$ Estimate of $Z^a\cC_1^i$}
\vskip 0.1 true cm

Applying \eqref{BA3}, \eqref{pw0:good:low}, \eqref{pw0:good} to \eqref{pw0:improv5} yields
\begin{equation}\label{NL:L2:pf1}
\begin{split}
&\Big\|\w{t+|x|}\sum_{\alpha,\beta,\mu,\nu=0}^3Q_{ijkl,abcd}^{\alpha\beta\mu\nu}
P_{=0}\p^2_{\alpha\beta}Z^bu^jP_{=0}\p_{\mu}Z^cu^kP_{=0}\p_{\nu}Z^du^l\Big\|_{L^2_{x,y}}\\
&\ls\ve_1^2\w{t}^{2\ve_2-1/2}E_{2N}[u](t)\ls\ve_1^3\w{t}^{3\ve_2-1/2}.
\end{split}
\end{equation}
For the first term in the second line of \eqref{pw0:improv4}, from \eqref{BA3} and \eqref{BA4}, one has
\begin{equation}\label{NL:L2:pf2}
\|\w{t+|x|}P_{\neq0}\p^2_{\alpha\beta}Z^bu^jP_{=0}\p_{\mu}Z^cu^kP_{=0}\p_{\nu}Z^du^l\|_{L^2_{x,y}}
\ls\ve_1^2\w{t}^{\ve_2-0.4}E_{2N}[u](t)\ls\ve_1^3\w{t}^{2\ve_2-0.4}.
\end{equation}
For the other terms in \eqref{pw0:improv4}, we can get the same estimate as for \eqref{NL:L2:pf2}.
Then collecting \eqref{NL:L2:pf1} and \eqref{NL:L2:pf2} leads to
\begin{equation}\label{NL:L2:pf3}
\sum_{|a|\le N+7}\|\w{t+|x|}Z^a\cC_1^i\|_{L^2_{x,y}}\ls\ve_1^3\w{t}^{2\ve_2-0.4}.
\end{equation}

\vskip 0.1 true cm
{\bf $\bullet$ Estimate of $Z^a\cC_2^i$}
\vskip 0.1 true cm

From \eqref{pw0:improv11} and $|a|\le N+7$, we know $|b|+|c|+|d|\le N+7$.
For the first term in \eqref{pw0:improv11}, \eqref{BA2} and \eqref{pw0:good} yield
\begin{equation}\label{NL:L2:pf4}
\|\w{t+|x|}|P_{=0}Z^bu||\p^{\le2}P_{=0}\p Z^cu||P_{=0}\bar\p \p^{\le2}Z^du|\|_{L^2_{x,y}}\\
\ls\ve_1^3\w{t}^{3\ve_2-0.3}.
\end{equation}
The treatment on the terms in the second and third lines of \eqref{pw0:improv11} is  similar to that for \eqref{NL:L2:pf2}, then
\begin{equation}\label{NL:L2:pf5}
\|\w{t+|x|}|P_{\neq0}Z^bu||\p^{\le2}P_{=0}\p Z^cu||\p^{\le2}P_{=0}\p Z^du|\|_{L^2_{x,y}}
\ls\ve_1^3\w{t}^{2\ve_2-0.4}.
\end{equation}
For the last term in \eqref{pw0:improv11}, by \eqref{BA2} and \eqref{BA4}, we arrive at
\begin{equation*}
\|\w{t+|x|}|P_{=0}Z^bu||\p^{\le2}P_{=0}\p Z^cu||\p^{\le3}P_{\neq0}Z^du|\|_{L^2_{x,y}}\\
\ls\ve_1^3\w{t}^{2\ve_2-0.4}.
\end{equation*}
This, together with \eqref{NL:L2:pf4} and \eqref{NL:L2:pf5}, derives
\begin{equation}\label{NL:L2:pf6}
\sum_{|a|\le N+7}\|\w{t+|x|}Z^a\cC_2^i\|_{L^2_{x,y}}\ls\ve_1^3\w{t}^{3\ve_2-0.3}.
\end{equation}

\vskip 0.1 true cm
{\bf $\bullet$ Estimate of $Z^aG^i$}
\vskip 0.1 true cm

By \eqref{BA1}-\eqref{BA5}, one can easily obtains
\begin{equation}\label{NL:L2:pf7}
\sum_{|a|\le N+7}\|\w{t+|x|}Z^aG^i\|_{L^2_{x,y}}\ls\ve_1^4\w{t}^{\ve_2-1/2}.
\end{equation}

Then \eqref{NL:L2} follows from \eqref{NL:L2:pf3}, \eqref{NL:L2:pf6} and \eqref{NL:L2:pf7}.
\end{proof}

Next, we  improve the estimates \eqref{BA4} and \eqref{BA5}.
\begin{lemma}
Let $u(t,x,y)$ be the solution of \eqref{QWE} and suppose that \eqref{BA1}-\eqref{BA5} hold.
Then one has
\begin{equation}\label{pwKG:improv}
\begin{split}
&\sum_{|a|\le2N-8}|P_{\neq0}Z^au(t,x,y)|\ls(\ve+\ve_1^2)\w{t+|x|}^{\ve_2-1/2}(\w{t+|x|}^{-0.45}+\w{x}^{-1/2}\w{t-|x|}^{-0.4}),\\
&\sum_{|a|\le N+2}|P_{\neq0}Z^au(t,x,y)|\ls(\ve+\ve_1^2)\w{t+|x|}^{-1}.
\end{split}
\end{equation}
\end{lemma}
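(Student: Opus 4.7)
My plan is to apply Lemma \ref{lem:pwKG} to $Z^a V$ (where $V$ is the normal-form unknown from \eqref{normal:form}), then transfer the resulting bounds on $P_{\neq 0}Z^a V$ back to $P_{\neq 0}Z^a u$ by estimating the quadratic/cubic remainder $u-V$ term by term. The two target bounds correspond to the two statements of Lemma \ref{lem:pwKG}: \eqref{pwKG2} for the lower-order $|a|\le N+2$ estimate (producing the sharp $\w{t+|x|}^{-1}$ Klein--Gordon decay) and \eqref{pwKG1} for the higher-order $|a|\le 2N-8$ estimate (producing the $\w{t+|x|}^{-0.95}$ decay in the first term of the claimed bound).

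\textbf{Lower-order bound ($|a|\le N+2$).} Apply \eqref{pwKG2} with $w=Z^aV$ and $\cF=\Box Z^aV$. This requires controlling $\sum_{|b|\le 5}\|\w{\tau+|x|}P_{\neq 0}Z^{a+b}\Box V\|_{L^2_{x,y}}$, where $|a+b|\le N+7$; but this is precisely what Lemma \ref{lem:NL:L2} (equation \eqref{NL:L2}) provides, namely a bound of $\ve_1^3\w{\tau}^{-0.2}$. Multiplying by the weight $\w{\tau}^{1/10}$ and taking the supremum yields $\sup_{\tau\in[0,t]}\w{\tau}^{-1/10}\ve_1^3\lesssim\ve_1^3$, so that $|P_{\neq 0}Z^aV|\lesssim(\ve+\ve_1^3)\w{t+|x|}^{-1}$ (the initial-data piece is absorbed by \eqref{initial:data}). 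To pass from $V$ back to $u$, expand $P_{\neq 0}Z^a(u-V)$ via \eqref{normal:form}, Leibniz, and \eqref{proj:property}. Each resulting bilinear/trilinear product is split into $P_{=0}$--$P_{\neq 0}$ combinations; in each product I place the highest-order factor in $L^2$ (bounded by $\ve_1\w{t}^{\ve_2}$ via \eqref{BA1} and Sobolev \eqref{pw:high}) and the remaining factors in $L^\infty$ using \eqref{BA2}--\eqref{BA5} and the already improved estimates \eqref{pw0:high}, \eqref{pw0:good}, \eqref{pw0:improv}. A routine check shows every term is dominated by $\ve_1^2\w{t+|x|}^{-1}$.

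\textbf{Higher-order bound ($|a|\le 2N-8$).} Apply \eqref{pwKG1} with $w=Z^aV$. Now $\sum_{|b|\le 5}\|\w{\tau+|x|}P_{\neq 0}Z^{a+b}\Box V\|_{L^2_{x,y}}$ runs over $|a+b|\le 2N-3$, which lies outside the range of Lemma \ref{lem:NL:L2} but is covered by the energy-type $L^2$ estimate \eqref{energy:V:pf1}, i.e.\ $\sum_{|c|\le 2N-3}\|\w{\tau+|x|}\Box Z^cV\|_{L^2_{x,y}}\lesssim \ve_1^3\w{\tau}^{\ve_2}\ln(2+\tau)$. Combined with the weight $\w{\tau}^{-1/20}$ in \eqref{pwKG1} and the choice $\ve_2=1/100<1/20$, the supremum is finite, so $|P_{\neq 0}Z^aV|\lesssim(\ve+\ve_1^3)\w{t+|x|}^{-0.95}$, which accounts for the first piece $\w{t+|x|}^{\ve_2-1/2}\w{t+|x|}^{-0.45}$ in the target. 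The second piece $\w{t+|x|}^{\ve_2-1/2}\w{x}^{-1/2}\w{t-|x|}^{-0.4}$ emerges from $P_{\neq 0}Z^a(u-V)$: in each product arising from \eqref{normal:form}, whenever a single factor receives many derivatives, I place it in $L^\infty$ via \eqref{pw:high} (which supplies the $\w{x}^{-1/2}$) while the remaining low-order factors are bounded pointwise by \eqref{pw0:improv} (giving the $\w{t-|x|}^{-0.4}$-type weight) and by \eqref{BA5}; mixed-projection products are split via \eqref{proj:property}.

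\textbf{Main obstacle.} The hard part will be the bookkeeping: distributing the $|a|$ derivatives among the two or three factors in $u-V$ so that (i) at most one factor is handled by the $L^2$/Sobolev estimate (which carries the growing factor $\w{t}^{\ve_2}$), (ii) the accumulated weights $\w{x}^{-1/2}$, $\w{t-|x|}^{-0.4}$, $\w{t+|x|}^{-1/2}$ combine correctly to match the claimed bound rather than producing slightly weaker exponents, and (iii) for the higher-order estimate, the $L^2$ bound on $\Box Z^bV$ needed at order $|b|\le 2N-3$ really is available from \eqref{energy:V:pf1}, rather than requiring the sharper Lemma \ref{lem:NL:L2} bound (which is only valid for $|b|\le N+7$). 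Once this bookkeeping is done, the conclusion \eqref{pwKG:improv} follows by adding the $V$-contribution and the $(u-V)$-contribution.
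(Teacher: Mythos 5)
Your proposal is correct and follows essentially the same route as the paper: apply \eqref{pwKG2} for $|a|\le N+2$ with the source controlled by \eqref{NL:L2}, apply \eqref{pwKG1} for $|a|\le 2N-8$ with the source controlled by the weaker but wider-range bound \eqref{energy:V:pf1}, then pass from $V$ back to $u$ via the product structure of \eqref{normal:form} together with \eqref{proj:property}, the bootstrap bounds, \eqref{pw:high} and \eqref{pw0:high}. One small imprecision: when you claim the $\w{t-|x|}^{-0.4}$ weight for the low-order $P_{=0}$ factor comes from \eqref{pw0:improv}, that estimate only yields $\w{t-|x|}^{-0.3}$ for undifferentiated $P_{=0}Z^a u$; what the paper actually uses (and what you need, since $\w{x}^{-1/2}\w{t-|x|}^{-0.3}$ is \emph{not} dominated by $\w{t+|x|}^{-0.45}+\w{x}^{-1/2}\w{t-|x|}^{-0.4}$ near $|x|=0$) is the sharper bound $\sum_{|a|\le N-1}|P_{=0}Z^a u|\ls\ve_1\w{t+|x|}^{-1/2}\w{t-|x|}^{-0.4}$ established as \eqref{pw0:low2} inside the proof of Lemma~\ref{lem:pw0:improv}, applicable after splitting according to whether $|c|\le N-2$ or $|c|\ge N-1$.
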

\begin{proof}
At first, it follows from Lemmas \ref{lem:pwKG}-\ref{lem:NL:L2} together with \eqref{initial:data}, \eqref{energy:V:pf1} and \eqref{NL:L2}
that
\begin{equation}\label{pwKG:improv1}
\begin{split}
&\sum_{|a|\le2N-8}|P_{\neq0}Z^aV|\ls(\ve+\ve_1^2)\w{t+|x|}^{-0.95},\\
&\sum_{|a|\le N+2}|P_{\neq0}Z^aV|\ls(\ve+\ve_1^2)\w{t+|x|}^{-1}.
\end{split}
\end{equation}

Next, we prove \eqref{pwKG:improv}. Note that
\eqref{normal:form} and \eqref{proj:property} lead to
\begin{equation}\label{pwKG:improv2}
\begin{split}
|P_{\neq0}Z^a(u-V)|&\ls\sum_{|b|+|c|\le|a|}\|\p^{\le1}P_{\neq0}Z^bu\|_{L_y^\infty}(\|\p^{\le1}P_{=0}Z^cu\|_{L_y^\infty}
+\|\p^{\le1}P_{\neq0}Z^cu\|_{L_y^\infty})\\
&\quad+\sum_{|b|+|c|+|d|\le|a|}\|\p^{\le2}P_{\neq0}Z^bu\|_{L_y^\infty}\|\p^{\le2}P_{\neq0}Z^cu\|_{L_y^\infty}
\|\p^{\le1}P_{=0}Z^du\|_{L_y^\infty}.
\end{split}
\end{equation}
From the first line in \eqref{pwKG:improv2}, one knows $|b|+|c|\le|a|\le2N-8$.
If $|c|\ge N-1$, then $|b|\le N-7$.
Thus, collecting \eqref{BA1}, \eqref{BA5}, \eqref{pw:high} and \eqref{pw0:high} yields
\begin{equation}\label{pwKG:improv3}
\sum_{\substack{|b|+|c|\le2N-8,\\|c|\ge N-1}}\|\p^{\le1}P_{\neq0}Z^bu\|_{L_y^\infty}(\|\p^{\le1}P_{=0}Z^cu\|_{L_y^\infty}+\|\p^{\le1}P_{\neq0}Z^cu\|_{L_y^\infty})
\ls\ve_1^2\w{t+|x|}^{2\ve_2-1}.
\end{equation}
If $|c|\le N-2$, then it can be deduced from \eqref{BA1}, \eqref{BA5}, \eqref{pw:high} and \eqref{pw0:low2} that
\begin{equation}\label{pwKG:improv4}
\begin{split}
&\sum_{\substack{|b|+|c|\le2N-8,\\|c|\le N-2}}\|\p^{\le1}P_{\neq0}Z^bu\|_{L_y^\infty}(\|\p^{\le1}P_{=0}Z^cu\|_{L_y^\infty}+\|\p^{\le1}P_{\neq0}Z^cu\|_{L_y^\infty})\\
&\ls\ve_1^2\w{t}^{\ve_2}\w{x}^{-1/2}\w{t+|x|}^{-1/2}\w{t-|x|}^{-0.4}.
\end{split}
\end{equation}
For the terms in the second line of \eqref{pwKG:improv2}, it follows from \eqref{BA1}, \eqref{BA2}, \eqref{BA5}, \eqref{pw:high}
and \eqref{pw0:high} that
\begin{equation}\label{pwKG:improv5}
\begin{split}
&\sum_{|b|+|c|+|d|\le2N-8}\|\p^{\le2}P_{\neq0}Z^bu\|_{L_y^\infty}\|\p^{\le2}P_{\neq0}Z^cu\|_{L_y^\infty}\|\p^{\le1}P_{=0}Z^du\|_{L_y^\infty}\\
&\ls\ve_1^3\w{t+|x|}^{2\ve_2-2}+\ve_1^3\w{t}^{\ve_2}\w{x}^{-1/2}\w{t+|x|}^{-3/2}\w{t-|x|}^{-0.3}\\
&\ls\ve_1^3\w{t+|x|}^{\ve_2-1.8}.
\end{split}
\end{equation}
Therefore, by \eqref{pwKG:improv1}-\eqref{pwKG:improv5}, the first inequality in \eqref{pwKG:improv} is proved.

At last, we show the second inequality in \eqref{pwKG:improv}.
By $|b|+|c|\le|a|\le N+2$, \eqref{BA2} and \eqref{BA4} ensure that
\begin{equation*}
\sum_{|b|+|c|\le N+2}\|\p^{\le1}P_{\neq0}Z^bu\|_{L_y^\infty}(\|\p^{\le1}P_{=0}Z^cu\|_{L_y^\infty}+\|\p^{\le1}P_{\neq0}Z^cu\|_{L_y^\infty})
\ls\ve_1^2\w{t+|x|}^{\ve_2-1.4}.
\end{equation*}
Together with \eqref{pwKG:improv1}, \eqref{pwKG:improv2} and \eqref{pwKG:improv5},
this derives the second inequality in \eqref{pwKG:improv}.
\end{proof}

\section{Energy estimate}\label{sect5}

\begin{lemma}\label{lem:energy}
Let $u$ be the solution of \eqref{QWE} and suppose that \eqref{BA1}-\eqref{BA5} hold.
Then
\begin{equation}\label{energy}
E^2_{2N}[u](t')\ls\ve^2+\ve_1\int_0^{t'}\w{t}^{-1}E^2_{2N}[u](t)dt.
\end{equation}
\end{lemma}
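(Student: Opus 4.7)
The plan is to apply Alinhac's ghost weight method directly to the original system \eqref{eqn:high} for $Z^au$ with $|a|\le 2N$, rather than to the normalized equation \eqref{eqn:normal} for $V$: the transformation \eqref{normal:form} contains $\p^2 u$, so working with $V$ at the top level would force a loss of derivatives. Choose the bounded weight $q(\sigma):=\int_{-\infty}^{\sigma}\w{s}^{-1.1}ds$ and test $\Box Z^au^i$ against $e^{-q(|x|-t)}\p_t Z^au^i$. After integrating over $[0,t']\times\R^2\times\T$, periodicity in $y$ eliminates boundary terms and standard computation gives
\begin{equation*}
\|\p Z^au(t')\|_{L^2_{x,y}}^2+\int_0^{t'}\!\!\int_{\R^2\times\T}\frac{|\bar\p Z^au|^2+|\p_y Z^au|^2}{\w{|x|-t}^{1.1}}\,dxdy\,dt
\lesssim \ve^2+\int_0^{t'}\!\!\int|\p_t Z^au|\,|Z^aF^i|\,dxdy\,dt.
\end{equation*}
The goal is to dominate the right-hand source integral by $\ve_1\int_0^{t'}\w{t}^{-1}E_{2N}^2[u](t)\,dt$ plus a fraction of the left-hand ghost-weight term.

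The next step is to insert the decomposition from \eqref{eqn:high} and \eqref{null:high}. For the quadratic pieces $Q_{ijk,abc}^{\alpha\beta}\p_\alpha Z^bu^j\p_\beta Z^cu^k$ and $Q_{ijk,abc}^{\alpha\beta\mu}\p^2_{\alpha\beta}Z^bu^j\p_\mu Z^cu^k$ (equivalently $Z^bQ_0(Z^cu^j,Z^du^k)$ via \eqref{NL:rewrite} and Lemma~\ref{Q0:commutate} — here I assume \ref{lem:null}), the partial null condition on the first three indices gives the pointwise bound
\begin{equation*}
|\text{quadratic}|\lesssim |\bar\p Z^{\le b+1}u||\p Z^cu|+|\p Z^{\le b+1}u||\bar\p Z^cu|+|\p_y Z^{\le b+1}u||\p Z^cu|+|\p Z^{\le b+1}u||\p_y Z^cu|.
\end{equation*}
By symmetry assume $|c|\le N-1$; then the factor of order $\le N-1$ (respectively $\le N$) is estimated pointwise using \eqref{pw0:good:low}, \eqref{pwKG:good}, \eqref{BA3}, \eqref{BA5} together with $\p_y P_{=0}=0$ (so $|\p_y f|=|\p_y P_{\neq 0}f|\lesssim\w{t+|x|}^{-1}$ at low order), yielding decay $\lesssim \ve_1\w{t+|x|}^{-1}\w{t-|x|}^{-0.3}$ or better. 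Cauchy–Schwarz against $\p_t Z^au$ then produces $\ve_1\w{t}^{-1}E_{2N}^2[u](t)$ (for the pure $\p$ factor) plus ghost-integrable contributions $\int\int|\bar\p Z^{\le a}u|^2/\w{t-|x|}^{1.1}$ (for the $\bar\p$ factor), the latter being absorbed into the left side of the energy identity.

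For the cubic pieces $C_{ijkl}^{abcd}Q_0(Z^bu^j,Z^cu^k)Z^du^l$ and $Q_{ijkl,abcd}^{\alpha\beta\mu\nu}\p^2_{\alpha\beta}Z^bu^j\p_\mu Z^cu^k\p_\nu Z^du^l$, two of the three factors have order at most $N+6$, so \eqref{BA:pw:full}, \eqref{pw0:good:low}, \eqref{pwKG:good}, \eqref{BA3} and \eqref{BA5} (applied with the $Q_0$ and \eqref{null:high} null structures via Lemma~\ref{lem:null}) give a combined pointwise factor of order $\ve_1^2\w{t}^{-1}$; pairing the remaining $L^2$ factor against $\p_t Z^au$ yields $\ve_1^2\w{t}^{-1}E_{2N}^2[u](t)$. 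The quartic terms (in particular those from $G_4$-type contributions at order $2N$) are handled similarly with an extra factor of $\ve_1$ and are therefore strictly easier.

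The main obstacle is the quadratic term when $\alpha$ or $\beta$ equals $3$: the partial null condition \eqref{null:high} only vanishes on indices in $\{0,1,2\}$, so the $\p_y$-contributions remain. Crucially these live in $P_{\neq 0}$ because $\p_y P_{=0}=0$, so one may couple them with Poincaré \eqref{Poincare:ineq} on the opposite factor (bounding $\|P_{\neq 0}Z^cu\|_{L^2_y}\lesssim\|\p_y Z^cu\|_{L^2_y}$) to recover an additional small-factor pointwise decay consistent with $\w{t+|x|}^{-1}$ from \eqref{BA5}. Making this bookkeeping yield exactly an integrable-in-$t$ loss $\w{t}^{-1}$ rather than $\w{t}^{-1+C\ve_2}$ is the delicate point; it relies on using the ghost-weight $\bar\p$-term to soak up any weight $\w{t-|x|}^{-1/2-\delta}$ produced when a Klainerman-type inequality \eqref{good:ident} is invoked to trade $Z$ for $t\bar\p$. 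Once these quadratic contributions are handled, adding up the three classes of nonlinear bounds produces \eqref{energy}.
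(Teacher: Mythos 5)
Your overall strategy — apply Alinhac's ghost weight directly to \eqref{eqn:high} rather than to the $V$-system, use the null decomposition and the bootstrap pointwise bounds, and soak up the $\bar\p$-weighted pieces with the ghost-weight flux — is indeed the paper's strategy. However, there is a genuine gap: you never address the top-order quasilinear term, i.e.\ the case $b=a$, where \eqref{eqn:high} produces
\begin{equation*}
Q_{ijk}^{\alpha\beta\mu}\,\p^2_{\alpha\beta}Z^au^j\,\p_\mu u^k
\qquad\text{and}\qquad
Q_{ijkl}^{\alpha\beta\mu\nu}\,\p^2_{\alpha\beta}Z^au^j\,\p_\mu u^k\,\p_\nu u^l,
\quad |a|=2N .
\end{equation*}
Your argument ``put the low-order factor in $L^\infty$ and pair the remaining $L^2$ factor against $\p_tZ^au$'' fails here: the remaining factor $\p^2Z^au^j$ carries $2N+2$ derivatives, one more than $E_{2N}[u]$ controls, so Cauchy--Schwarz leaves you with an uncontrollable $\|\p^2 Z^a u\|_{L^2}$. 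This is exactly the loss-of-derivatives issue the symmetric conditions \eqref{sym:condition} are designed for: one must integrate by parts the $\alpha$-derivative off $\p^2_{\alpha\beta}Z^au^j$, then use $Q_{ijk}^{\alpha\beta\mu}=Q_{jik}^{\alpha\beta\mu}$ to recombine $\p_tZ^au^i\p_\alpha\p_\beta Z^au^j$ into a total time derivative $\tfrac12\p_t(\p_\alpha Z^au^i\p_\beta Z^au^j\cdots)$ plus lower-order remainders (the paper's identities \eqref{energy2}--\eqref{energy3}). Without this symmetrization step your energy inequality does not close at the highest order.

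A secondary point: for the wave-map cubic term $Q_0(Z^bu^j,Z^cu^k)Z^du^l$ with $|d|\ge N$, the factor $Z^du^l$ carries no $\p$ and so is not directly in the $E_{2N}$ norm. You implicitly treat it as an ``$L^2$ factor,'' but you need the intermediate step \eqref{energy15} (trading $Z$ for $\w{t+|x|}\p$ on the zero mode) plus the Poincar\'e inequality on the non-zero mode, combined with the $\w{t+|x|}^{-2}$ and $\w{t+|x|}^{-2.3}$ pointwise decays in \eqref{energy17}--\eqref{energy18}, to convert it into a controllable weighted energy. Your sketch of the cubic terms glosses over precisely this.

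Outside of those two gaps, your treatment of the $|b|<|a|$ quadratic terms via null decomposition, mode projection, $P_{=0}\p_y=0$, and ghost-weight absorption matches the paper's argument.
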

\begin{proof}
For any multi-index $a$ with $|a|\le2N$, multiplying \eqref{eqn:high} by $e^q\p_tZ^au^i$ with $q=q(|x|-t)$ and $q(s)=\int_{-\infty}^s\frac{dt}{\w{t}^{1.1}}$ yields that
\begin{equation}\label{energy1}
\begin{split}
&\frac12\sum_{i=1}^m\p_t(e^q|\p Z^au^i|^2)-\sum_{i=1}^m\sum_{\gamma=1}^3\p_{\gamma}(e^q\p_tZ^au^i\p_{\gamma}Z^au^i)
+\sum_{i=1}^m\frac{e^q}{2\w{t-|x}^{1.1}}(|\bar\p Z^au^i|^2+|\p_yZ^au^i|^2)\\
&=\sum_{i=1}^m e^q\p_tZ^au^i\Box Z^au^i
=e^qQ_{ijk}^{\alpha\beta\mu}\p^2_{\alpha\beta}Z^au^j\p_{\mu}u^k\p_tZ^au^i
+e^qQ_{ijkl}^{\alpha\beta\mu\nu}\p^2_{\alpha\beta}Z^au^j\p_{\mu}u^k\p_{\nu}u^l\p_tZ^au^i\\
&+e^q\p_tZ^au^i\sum_{\substack{b+c\le a,\\|b|<|a|}}Q_{ijk,abc}^{\alpha\beta\mu}\p^2_{\alpha\beta}Z^bu^j\p_{\mu}Z^cu^k
+e^q\p_tZ^au^i\sum_{\substack{b+c+d\le a,\\|b|<|a|}}Q_{ijkl,abcd}^{\alpha\beta\mu\nu}\p^2_{\alpha\beta}Z^bu^j\p_{\mu}Z^cu^k\p_{\nu}Z^du^l\\
&+e^q\p_tZ^au^i\sum_{b+c\le a}Q_{ijk,abc}^{\alpha\beta}\p_{\alpha}Z^bu^j\p_{\beta}Z^cu^k
+e^q\p_tZ^au^i\sum_{b+c+d\le a}C^{abcd}_{ijkl}Q_0(Z^bu^j,Z^cu^k)Z^du^l,
\end{split}
\end{equation}
where the summations over $\alpha,\beta,\mu,\nu=0,\cdots,3$ and $i,j,k,l=1,\cdots,m$ in \eqref{energy1} are omitted.
By the symmetric conditions \eqref{sym:condition}, for the terms on the second line of \eqref{energy1}, we have
\begin{equation}\label{energy2}
\begin{split}
&e^qQ_{ijk}^{\alpha\beta\mu}\p^2_{\alpha\beta}Z^au^j\p_{\mu}u^k\p_tZ^au^i
+e^qQ_{ijkl}^{\alpha\beta\mu\nu}\p^2_{\alpha\beta}Z^au^j\p_{\mu}u^k\p_{\nu}u^l\p_tZ^au^i\\
&=\p_\alpha(e^qQ_{ijk}^{\alpha\beta\mu}\p_{\beta}Z^au^j\p_{\mu}u^k\p_tZ^au^i
+e^qQ_{ijkl}^{\alpha\beta\mu\nu}\p_{\beta}Z^au^j\p_{\mu}u^k\p_{\nu}u^l\p_tZ^au^i)\\
&\quad-e^q\p_tZ^au^i\p_{\alpha}q(Q_{ijk}^{\alpha\beta\mu}\p_{\beta}Z^au^j\p_{\mu}u^k
+Q_{ijkl}^{\alpha\beta\mu\nu}\p_{\beta}Z^au^j\p_{\mu}u^k\p_{\nu}u^l)\\
&\quad-e^q\p_tZ^au^i[Q_{ijk}^{\alpha\beta\mu}\p_{\beta}Z^au^j\p^2_{\alpha\mu}u^k
+Q_{ijkl}^{\alpha\beta\mu\nu}\p_{\beta}Z^au^j(\p^2_{\alpha\mu}u^k\p_{\nu}u^l+\p_{\mu}u^k\p^2_{\alpha\nu}u^l)]\\
&\quad-e^q\p_t\p_{\alpha}Z^au^i(Q_{ijk}^{\alpha\beta\mu}\p_{\beta}Z^au^j\p_{\mu}u^k
+Q_{ijkl}^{\alpha\beta\mu\nu}\p_{\beta}Z^au^j\p_{\mu}u^k\p_{\nu}u^l)
\end{split}
\end{equation}
and
\begin{equation}\label{energy3}
\begin{split}
&-e^q\p_t\p_{\alpha}Z^au^i(Q_{ijk}^{\alpha\beta\mu}\p_{\beta}Z^au^j\p_{\mu}u^k
+Q_{ijkl}^{\alpha\beta\mu\nu}\p_{\beta}Z^au^j\p_{\mu}u^k\p_{\nu}u^l)\\
&=-\frac12\p_t[e^q\p_{\alpha}Z^au^i\p_{\beta}Z^au^j(Q_{ijk}^{\alpha\beta\mu}\p_{\mu}u^k+Q_{ijkl}^{\alpha\beta\mu\nu}\p_{\mu}u^k\p_{\nu}u^l)]\\
&\quad+\frac12e^q\p_tq\p_{\alpha}Z^au^i\p_{\beta}Z^au^j(Q_{ijk}^{\alpha\beta\mu}\p_{\mu}u^k+Q_{ijkl}^{\alpha\beta\mu\nu}\p_{\mu}u^k\p_{\nu}u^l)\\
&\quad+\frac12e^q\p_{\alpha}Z^au^i\p_{\beta}Z^au^j[Q_{ijk}^{\alpha\beta\mu}\p^2_{0\mu}u^k
+Q_{ijkl}^{\alpha\beta\mu\nu}(\p^2_{0\mu}u^k\p_{\nu}u^l+\p_{\mu}u^k\p^2_{0\nu}u^l)].
\end{split}
\end{equation}
By integrating \eqref{energy1}-\eqref{energy3}  over $[0,t']\times\R^2\times\T$, summing over all $|a|\le2N$,
and applying \eqref{BA:pw:full}, we arrive at
\begin{equation}\label{energy4}
\begin{split}
&\quad E^2_{2N}[u](t')+\sum_{|a|\le2N}\int_0^{t'}\iint_{\R^2\times\T}\frac{|\bar\p Z^au|^2+|\p_yZ^au|^2}{\w{t-|x|}^{1.1}}dxdydt\\
&\ls E^2_{2N}[u](0)+\ve_1 E^2_{2N}[u](t')+\sum_{|a|\le2N}\int_0^{t'}\iint_{\R^2\times\T}I(t,x,y)dxdydt\\
&+\sum_{\substack{|b|+|c|\le|a|\le2N,\\|b|\le2N-1}}\int_0^{t'}\iint_{\R^2\times\T}
|\p_tZ^au^i|\Big|\sum_{\alpha,\beta,\mu=0}^3Q_{ijk,abc}^{\alpha\beta\mu}\p^2_{\alpha\beta}Z^bu^j\p_{\mu}Z^cu^k\Big|dxdydt\\
&+\sum_{\substack{|b|+|c|+|d|\le|a|\le2N,\\|b|\le2N-1}}\int_0^{t'}\iint_{\R^2\times\T}
|\p_tZ^au^i|\Big|\sum_{\alpha,\beta,\mu,\nu=0}^3Q_{ijkl,abcd}^{\alpha\beta\mu\nu}\p^2_{\alpha\beta}Z^bu^j\p_{\mu}Z^cu^k\p_{\nu}Z^du^l\Big|dxdydt\\
&+\sum_{|b|+|c|\le|a|\le2N}\int_0^{t'}\iint_{\R^2\times\T}|\p_tZ^au^i|
\Big|\sum_{\alpha,\beta=0}^3Q_{ijk,abc}^{\alpha\beta}\p_{\alpha}Z^bu^j\p_{\beta}Z^cu^k\Big|dxdydt\\
&+\sum_{|b|+|c|+|d|\le|a|\le2N}\int_0^{t'}\iint_{\R^2\times\T}|\p_tZ^au^iQ_0(Z^bu^j,Z^cu^k)Z^du^l|dxdydt,
\end{split}
\end{equation}
where
\begin{equation}\label{energy5}
\begin{split}
&\quad I(t,x,y):=\\
&|\p_tZ^au^i|\Big(\Big|\sum_{\alpha,\beta,\mu=0}^3Q_{ijk}^{\alpha\beta\mu}\p_{\alpha}q\p_{\beta}Z^au^j\p_{\mu}u^k\Big|
+\Big|\sum_{\alpha,\beta,\mu,\nu=0}^3Q_{ijkl}^{\alpha\beta\mu\nu}\p_{\alpha}q\p_{\beta}Z^au^j\p_{\mu}u^k\p_{\nu}u^l\Big|\Big)\\
&+|\p_tZ^au^i|\Big|\sum_{\alpha,\beta,\mu=0}^3Q_{ijk}^{\alpha\beta\mu}\p_{\beta}Z^au^j\p^2_{\alpha\mu}u^k\Big|
+\Big|\sum_{\alpha,\beta,\mu=0}^3Q_{ijk}^{\alpha\beta\mu}\p_{\alpha}Z^au^i\p_{\beta}Z^au^j\p^2_{0\mu}u^k\Big|\\
&+|\p_tZ^au^i|\Big|\sum_{\alpha,\beta,\mu,\nu=0}^3Q_{ijkl}^{\alpha\beta\mu\nu}\p_{\beta}Z^au^j(\p^2_{\alpha\mu}u^k\p_{\nu}u^l
+\p_{\mu}u^k\p^2_{\alpha\nu}u^l)\Big|\\
&+|\p_tq|\Big(\Big|\sum_{\alpha,\beta,\mu=0}^3Q_{ijk}^{\alpha\beta\mu}\p_{\alpha}Z^au^i\p_{\beta}Z^au^j\p_{\mu}u^k\Big|
+\Big|\sum_{\alpha,\beta,\mu,\nu=0}^3Q_{ijkl}^{\alpha\beta\mu\nu}\p_{\alpha}Z^au^i\p_{\beta}Z^au^j\p_{\mu}u^k\p_{\nu}u^l\Big|\Big)\\
&+\Big|\sum_{\alpha,\beta,\mu,\nu=0}^3Q_{ijkl}^{\alpha\beta\mu\nu}\p_{\alpha}Z^au^i\p_{\beta}Z^au^j(\p^2_{0\mu}u^k\p_{\nu}u^l
+\p_{\mu}u^k\p^2_{0\nu}u^l)\Big|.
\end{split}
\end{equation}
Note that $|b|\le N$ or $|c|\le N$ holds in the third and fifth lines of \eqref{energy4}.

We now treat the cases of $|b|\le N$.
Note that by $P_{=0}\p_y=0$, one has
\begin{equation*}\label{energy6}
\begin{split}
&\sum_{\alpha,\beta,\mu=0}^3Q_{ijk,abc}^{\alpha\beta\mu}\p^2_{\alpha\beta}Z^bu^j\p_{\mu}Z^cu^k
=\sum_{\alpha,\beta,\mu=0}^3Q_{ijk,abc}^{\alpha\beta\mu}\p_{\mu}Z^cu^k(P_{=0}\p^2_{\alpha\beta}Z^bu^j+P_{\neq0}\p^2_{\alpha\beta}Z^bu^j)\\
\end{split}
\end{equation*}

\begin{equation}\label{energy6}
\begin{split}
&=\sum_{\alpha,\beta=0}^2Q_{ijk,abc}^{\alpha\beta3}\p_yZ^cu^kP_{=0}\p^2_{\alpha\beta}Z^bu^j
+\sum_{\alpha,\beta,\mu=0}^2Q_{ijk,abc}^{\alpha\beta\mu}\p_{\mu}Z^cu^kP_{=0}\p^2_{\alpha\beta}Z^bu^j\\
&\quad+\sum_{\alpha,\beta,\mu=0}^3Q_{ijk,abc}^{\alpha\beta\mu}\p_{\mu}Z^cu^kP_{\neq0}\p^2_{\alpha\beta}Z^bu^j.
\end{split}
\end{equation}
For the terms in the third line of \eqref{energy6}, in terms of \eqref{BA5}, we have
\begin{equation}\label{energy7}
|\p_{\mu}Z^cu^kP_{\neq0}\p^2_{\alpha\beta}Z^bu^j|\ls\ve_1\w{t}^{-1}|\p Z^cu|.
\end{equation}
For the terms in the second line of \eqref{energy6}, when $|x|\le\w{t}/2$,
one has from \eqref{BA3} that
\begin{equation}\label{energy8}
\begin{split}
|\p_yZ^cu^kP_{=0}\p^2_{\alpha\beta}Z^bu^j|+|\p_{\mu}Z^cu^kP_{=0}\p^2_{\alpha\beta}Z^bu^j|\ls\ve_1\w{t}^{-1.3}|\p Z^cu|.
\end{split}
\end{equation}
When $|x|\ge\w{t}/2$, it is concluded from \eqref{null:structure}, \eqref{BA3}, \eqref{pw0:good:low} and Young's inequality that
\begin{equation}\label{energy9}
\begin{split}
&\quad\;|\p_tZ^au^i||\p_yZ^cu^kP_{=0}\p^2_{\alpha\beta}Z^bu^j|
+|\p_tZ^au^i|\Big|\sum_{\alpha,\beta,\mu=0}^2Q_{ijk,abc}^{\alpha\beta\mu}\p_{\mu}Z^cu^jP_{=0}\p^2_{\alpha\beta}Z^bu^k\Big|\\
&\ls|\p Z^au||P_{=0}\p^2Z^bu|(|\bar\p Z^cu|+|\p_yZ^cu|)+|\p Z^au||\p Z^cu||P_{=0}\bar\p\p Z^bu|\\
&\ls\ve_1\w{t}^{-1/2}\w{t-|x|}^{-1.3}|\p Z^au|(|\bar\p Z^cu|+|\p_yZ^cu|)+\ve_1\w{t}^{-3/2}|\p Z^au||\p Z^cu|\\
&\ls\ve_1\w{t-|x|}^{-2.6}(|\bar\p Z^cu|^2+|\p_yZ^cu|^2)+\ve_1\w{t}^{-1}(|\p Z^au|^2+|\p Z^au||\p Z^cu|).
\end{split}
\end{equation}
Substituting \eqref{energy7}-\eqref{energy9} into \eqref{energy6} yields
\begin{equation}\label{energy10}
\begin{split}
&\quad\sum_{|b|\le N}|\p_tZ^au^i|\Big|\sum_{\alpha,\beta,\mu=0}^3Q_{ijk,abc}^{\alpha\beta\mu}\p^2_{\alpha\beta}Z^bu^j\p_{\mu}Z^cu^k\Big|\\
&\ls\ve_1\w{t-|x|}^{-2.6}(|\bar\p Z^cu|^2+|\p_yZ^cu|^2)+\ve_1\w{t}^{-1}(|\p Z^au|^2+|\p Z^au||\p Z^cu|).
\end{split}
\end{equation}

For the cases of $|c|\le N$,
analogously to \eqref{energy6}, we have
\begin{equation*}
\begin{split}
&\sum_{\alpha,\beta,\mu=0}^3Q_{ijk,abc}^{\alpha\beta\mu}\p^2_{\alpha\beta}Z^bu^j\p_{\mu}Z^cu^k
=\sum_{\alpha,\beta,\mu=0}^3Q_{ijk,abc}^{\alpha\beta\mu}\p^2_{\alpha\beta}Z^bu^j(P_{=0}\p_{\mu}Z^cu^k+P_{\neq0}\p_{\mu}Z^cu^k)\\
&=\sum_{\substack{\alpha,\beta,\mu=0,\\\alpha=3~\mathrm{or}~\beta=3}}^3Q_{ijk,abc}^{\alpha\beta\mu}\p^2_{\alpha\beta}Z^bu^jP_{=0}\p_{\mu}Z^cu^k
+\sum_{\alpha,\beta,\mu=0}^2Q_{ijk,abc}^{\alpha\beta\mu}\p^2_{\alpha\beta}Z^bu^jP_{=0}\p_{\mu}Z^cu^k\\
&\quad+\sum_{\alpha,\beta,\mu=0}^3Q_{ijk,abc}^{\alpha\beta\mu}\p^2_{\alpha\beta}Z^bu^jP_{\neq0}\p_{\mu}Z^cu^k.
\end{split}
\end{equation*}
Then the following estimate similar to \eqref{energy10} can be obtained
\begin{equation}\label{energy11}
\begin{split}
&\quad\sum_{|c|\le N}|\p_tZ^au^i|\Big|\sum_{\alpha,\beta,\mu=0}^3Q_{ijk,abc}^{\alpha\beta\mu}\p^2_{\alpha\beta}Z^bu^j\p_{\mu}Z^cu^k\Big|\\
&\ls\ve_1\w{t-|x|}^{-2.6}(|\bar\p\p Z^bu|^2+|\p_y\p Z^bu|^2)+\ve_1\w{t}^{-1}(|\p Z^au|^2+|\p Z^au||\p^2Z^bu|).
\end{split}
\end{equation}
Collecting \eqref{energy10} and \eqref{energy11} yields
\begin{equation}\label{energy12}
\begin{split}
&\sum_{\substack{|b|+|c|\le|a|\le2N,\\|b|\le2N-1}}\int_0^{t'}\iint_{\R^2\times\T}
|\p_tZ^a\vp|\Big|\sum_{\alpha,\beta,\mu=0}^3Q_{abc}^{\alpha\beta\mu}\p^2_{\alpha\beta}Z^b\vp\p_{\mu}Z^c\vp\Big|dxdydt\\
&\ls\ve_1\sum_{|c|\le2N}\int_0^{t'}\iint_{\R^2\times\T}\frac{|\bar\p Z^c\vp|^2+|\p_yZ^c\vp|^2}{\w{t-|x|}^{1.1}}dxdydt
+\ve_1\int_0^{t'}\w{t}^{-1}E^2_{2N}[u](t)dt.
\end{split}
\end{equation}
Analogously, we can get the same estimate as in \eqref{energy12} for the terms in the fourth and fifth lines of \eqref{energy4}.

Next, we start to  treat $\ds\sum_{|a|\le2N}\int_0^{t'}\iint_{\R^2\times\T}I(t,x,y)dxdydt$
with $I(t,x,y)$ defined by \eqref{energy5}.

For the first term in $I(t,x,y)$, similarly to \eqref{energy6}, from $\p_yq=0$, we have
\begin{equation*}
\begin{split}
&\sum_{\alpha,\beta,\mu=0}^3Q_{ijk}^{\alpha\beta\mu}\p_{\alpha}q\p_{\beta}Z^au^j\p_{\mu}u^k
=\sum_{\alpha,\beta,\mu=0}^3Q_{ijk}^{\alpha\beta\mu}\p_{\alpha}q\p_{\beta}Z^au^j(P_{=0}\p_{\mu}u^k+P_{\neq0}\p_{\mu}u^k)\\
&=\sum_{\alpha,\mu=0}^2Q_{ijk}^{\alpha\beta\mu}\p_{\alpha}q\p_yZ^au^jP_{=0}\p_{\mu}u^k
+\sum_{\alpha,\beta,\mu=0}^2Q_{ijk}^{\alpha\beta\mu}\p_{\alpha}q\p_{\beta}Z^au^jP_{=0}\p_{\mu}u^k\\
&\quad+\sum_{\alpha,\beta,\mu=0}^3Q_{ijk}^{\alpha\beta\mu}\p_{\alpha}q\p_{\beta}Z^au^jP_{\neq0}\p_{\mu}u^k.
\end{split}
\end{equation*}
In addition, by $\bar\p q=0$ and \eqref{null:structure}, one can arrive at
\begin{equation*}
\begin{split}
\Big|\sum_{\alpha,\beta,\mu=0}^2Q_{ijk}^{\alpha\beta\mu}\p_{\alpha}q\p_{\beta}Z^au^jP_{=0}\p_{\mu}u^k\Big|
\ls|\bar\p Z^au||P_{=0}\p u|+|\p Z^au||P_{=0}\bar\p u|.
\end{split}
\end{equation*}
Thus the estimate of $\sum_{|a|\le2N}\int_0^{t'}\iint_{\R^2\times\T}I(t,x,y)dxdydt$ is the same as for \eqref{energy12}.

At last, we estimate the last line in \eqref{energy4}.
When $|d|\le N$, it follows from \eqref{BA:pw:full} that
\begin{equation}\label{energy13}
\sum_{\substack{|b|+|c|+|d|\le2N,\\|d|\le N}}\int_0^{t'}\iint_{\R^2\times\T}|\p_tZ^au^iQ_0(Z^bu^j,Z^cu^k)Z^du^l|dxdydt
\ls\ve_1^2\int_0^{t'}\w{t}^{-1}E^2_{2N}[u](t)dt.
\end{equation}
When $|d|\ge N$, one can see that $|b|+|c|\le N$ and $Z^du^l=P_{\neq0}Z^du^l+P_{=0}Z^du^l$.
By  \eqref{Poincare:ineq} and \eqref{BA:pw:full}, we have
\begin{equation}\label{energy14}
\sum_{\substack{|b|+|c|+|d|\le2N,\\|d|\ge N}}\int_0^{t'}\iint_{\R^2\times\T}|\p_tZ^au^iQ_0(Z^bu^j,Z^cu^k)P_{\neq0}Z^du^l|dxdydt
\ls\ve_1^2\int_0^{t'}\w{t}^{-1}E^2_{2N}[u](t)dt.
\end{equation}
On the other hand, it is derived from \eqref{energy:V:pf5} that
\begin{equation}\label{energy15}
|P_{=0}Z^du|\ls\w{t+|x|}\sum_{|d'|\le|d|-1\le2N-1}|P_{=0}\p Z^{d'}u|.
\end{equation}
In addition, by \eqref{proj:property}, we have
\begin{equation}\label{energy16}
\begin{split}
Q_0(Z^bu^j,Z^cu^k)&=Q_0(P_{\neq0}Z^bu^j,P_{\neq0}Z^cu^k)+Q_0(P_{=0}Z^bu^j,P_{=0}Z^cu^k)\\
&\quad+Q_0(P_{=0}Z^bu^j,P_{\neq0}Z^cu^k)+Q_0(P_{\neq0}Z^bu^j,P_{=0}Z^cu^k).
\end{split}
\end{equation}
For the first line in \eqref{energy16}, it follows from \eqref{null:structure}, \eqref{BA3}, \eqref{BA5} and \eqref{pw0:good:low} that
\begin{equation}\label{energy17}
\begin{split}
&\quad\;|Q_0(P_{\neq0}Z^bu^j,P_{\neq0}Z^cu^k)|+|Q_0(P_{=0}Z^bu^j,P_{=0}Z^cu^k)|\\
&\ls|P_{\neq0}\p Z^bu||P_{\neq0}\p Z^cu|+|P_{=0}\bar\p Z^bu||P_{=0}\p Z^cu|+|P_{=0}\p Z^bu||P_{=0}\bar\p Z^cu|\\
&\ls\ve_1^2\w{t+|x|}^{-2}.
\end{split}
\end{equation}
For the second line in \eqref{energy16}, by \eqref{null:structure}, \eqref{BA3}, \eqref{BA5}, \eqref{pw0:good:low} and
\eqref{pwKG:good},
one can obtain
\begin{equation}\label{energy18}
\begin{split}
&\quad\;|Q_0(P_{=0}Z^bu^j,P_{\neq0}Z^cu^k)|+|Q_0(P_{\neq0}Z^bu^j,P_{=0}Z^cu^k)|\\
&\ls|P_{=0}\bar\p Z^bu||P_{\neq0}\p Z^cu|+|P_{=0}\p Z^bu||P_{\neq0}\bar\p Z^cu|\\
&\quad+|P_{\neq0}\bar\p Z^bu||P_{=0}\p Z^cu|+|P_{\neq0}\p Z^bu||P_{=0}\bar\p Z^cu|\\
&\ls\ve_1^2\w{t+|x|}^{-2.3}.
\end{split}
\end{equation}
Collecting \eqref{energy15}-\eqref{energy18} yields
\begin{equation}\label{energy19}
\sum_{\substack{|b|+|c|+|d|\le2N,\\|d|\ge N}}\int_0^{t'}\iint_{\R^2\times\T}|\p_tZ^au^iQ_0(Z^bu^j,Z^cu^k)P_{=0}Z^du^l|dxdydt
\ls\ve_1^2\int_0^{t'}\w{t}^{-1}E^2_{2N}[u](t)dt.
\end{equation}
Therefore, it follows from \eqref{initial:data}, \eqref{energy4}, \eqref{energy12}-\eqref{energy14},
\eqref{energy19} and the smallness of $\ve_1$ that \eqref{energy} holds.
\end{proof}

\section{Proof of Theorem \ref{thm1}}
\begin{proof}
It follows from \eqref{pw0:improv}, \eqref{pwKG:improv},
\eqref{energy} and Gronwall's inequality that there are two constants $C_1,C_2>1$ such that
\begin{equation*}
E_{2N}[u](t)\le C_1(\ve+\ve_1^2)(1+t)^{C_2\ve_1}
\end{equation*}
and
\begin{equation*}
\begin{split}
&\sum_{|a|\le N+7}|P_{=0}Z^au(t,x,y)|\le C_1(\ve+\ve_1^2)\w{t+|x|}^{-1/2}\w{t-|x|}^{-0.3},\\
&\sum_{|a|\le N+6}|P_{=0}\p Z^au(t,x,y)|\le C_1(\ve+\ve_1^2)\w{x}^{-1/2}\w{t-|x|}^{-1.3},\\
&\sum_{|a|\le2N-8}|P_{\neq0}Z^au(t,x,y)|\le C_1(\ve+\ve_1^2)\w{t+|x|}^{\ve_2-1/2}(\w{t+|x|}^{-0.45}+\w{x}^{-1/2}\w{t-|x|}^{-0.4}),\\
&\sum_{|a|\le N+2}|P_{\neq0}Z^au(t,x,y)|\le C_1(\ve+\ve_1^2)\w{t+|x|}^{-1}.
\end{split}
\end{equation*}
Let $\ve_1=4C_1\ve$ and $\ve_0=\min\{\frac{1}{16C_1^2+1},\frac{1}{400C_1C_2+1}\}$.
Then  \eqref{BA1}-\eqref{BA5} are improved to
\begin{equation*}
E_{2N}[u](t)\le\frac12\ve_1(1+t)^{\ve_2}
\end{equation*}
and
\begin{equation*}
\begin{split}
&\sum_{|a|\le N+7}|P_{=0}Z^au(t,x,y)|\le\frac12\ve_1\w{t+|x|}^{-1/2}\w{t-|x|}^{-0.3},\\
&\sum_{|a|\le N+6}|P_{=0}\p Z^au(t,x,y)|\le\frac12\ve_1\w{x}^{-1/2}\w{t-|x|}^{-1.3},\\
&\sum_{|a|\le2N-8}|P_{\neq0}Z^au(t,x,y)|\le\frac12\ve_1\w{t+|x|}^{\ve_2-1/2}(\w{t+|x|}^{-0.45}+\w{x}^{-1/2}\w{t-|x|}^{-0.4}),\\
&\sum_{|a|\le N+2}|P_{\neq0}Z^au(t,x,y)|\le\frac12\ve_1\w{t+|x|}^{-1}.
\end{split}
\end{equation*}
This, together with the local existence of classical solution to \eqref{QWE}, yields that \eqref{QWE} with \eqref{part:null:condtion}
and \eqref{sym:condition} admits a unique global solution $u\in\bigcap\limits_{j=0}^{2N+1}C^{j}([0,\infty), H^{2N+1-j}(\R^2\times\T)))$.
Moreover, \eqref{thm1:decay} can be achieved by \eqref{proj:property}, \eqref{BA3}, \eqref{BA5} and \eqref{pw0:low2}.

At last, we prove \eqref{thm1:scater}.
First, we establish
\begin{equation}\label{scater:pf1}
E_{0}[u-V](t)\ls\ve^2\w{t}^{-1/2}.
\end{equation}
It follows from \eqref{energy:ineq} with \eqref{initial:data}, \eqref{normal:form} and \eqref{NL:L2} that
\begin{equation*}
\begin{split}
\big(E_{N+6}[V](t)\big)^2
&\ls\big(E_{N+6}[V](0)\big)^2+\ve^3\int_0^t(1+s)^{-1.2}ds\sup_{s\in[0,t]}E_{N+6}[V](s)\\
&\ls(\ve+\ve^3)\sup_{s\in[0,t]}E_{N+6}[V](s),
\end{split}
\end{equation*}
which implies
\begin{equation}\label{scater:pf2}
E_{N+6}[V](t)\ls\ve.
\end{equation}
On the other hand, by \eqref{normal:form}, one has
\begin{equation}\label{scater:pf3}
\begin{split}
E_{N+6}[u-V](t)&\ls\sum_{|b|+|c|\le N+6}\||\p^{\le1}\p Z^bu||\p^{\le1}Z^cu|\|_{L^2_{x,y}}\\
&+\sum_{|b|+|c|+|d|\le N+6}\||\p^{\le3}P_{\neq0}Z^bu||\p^{\le3}P_{\neq0}Z^cu||P_{=0}\p^{\le1}Z^du|\|_{L^2_{x,y}}.
\end{split}
\end{equation}
This, together with \eqref{proj:property}, \eqref{Poincare:ineq}, \eqref{BA1}, \eqref{BA2}, \eqref{BA5}, \eqref{BA:pw:full}
and \eqref{scater:pf2}, yields
\begin{equation}\label{scater:pf4}
E_{N+6}[u](t)\ls\ve+\ve\w{t}^{-1/2}(E_{N+7}[u](t)+\ve E_{N+8}[u](t))\ls\ve+\ve\w{t}^{-0.49}\ls\ve.
\end{equation}
Analogously to \eqref{scater:pf3}, we can obtain
\begin{equation*}
\begin{split}
E_{0}[u-V](t)&\ls\||\p^{\le1}\p u||\p^{\le1}u|+|\p^{\le3}P_{\neq0}u||\p^{\le3}P_{\neq0}u||P_{=0}\p^{\le1}u|\|_{L^2_{x,y}}\\
&\ls\ve\w{t}^{-1/2}E_{3}[u](t)\ls\ve^2\w{t}^{-1/2},
\end{split}
\end{equation*}
where \eqref{proj:property}, \eqref{Poincare:ineq}, \eqref{BA2}, \eqref{BA5}, \eqref{BA:pw:full} and \eqref{scater:pf4}
have been used.
Thus, \eqref{scater:pf1} is proved.

Next, we construct $u^\infty$ in \eqref{thm1:scater}.
Denote $\ds\Lambda:=\Big(-\sum_{j=1}^3\p^2_j\Big)^\frac12$ and $\cV:=(\p_t+i\Lambda)V$.
Then one has
\begin{equation*}
(\p_t-i\Lambda)\cV=\Box V.
\end{equation*}
Together with Duhamel's principle, this leads to
\begin{equation}\label{scater:pf5}
\cV(t)=e^{it\Lambda}\cV(0)+\int_0^te^{i(t-s)\Lambda}\Box V(s)ds.
\end{equation}
Set
\begin{equation}\label{scater:pf6}
\begin{split}
&\cU_{(0)}^\infty:=\cV(0)+\int_0^\infty e^{-is\Lambda}\Box V(s)ds,\\
&u^\infty_{(0)}:=\Lambda^{-1}\mathrm{Im}~\cU_{(0)}^\infty,\quad u^\infty_{(1)}:=\mathrm{Re}~\cU_{(0)}^\infty.
\end{split}
\end{equation}
It follows from \eqref{initial:data}, \eqref{NL:L2}, \eqref{scater:pf6}, the Minkowski inequality and the unitary of $e^{-is\Lambda}$ that
\begin{equation*}
\begin{split}
\|\cU_{(0)}^\infty\|_{L^2_{x,y}}&\ls\|\cV(0)\|_{L^2_{x,y}}+\int_0^\infty\|e^{-is\Lambda}\Box V(s)\|_{L^2_{x,y}}ds\\
&\ls\|\cV(0)\|_{L^2_{x,y}}+\int_0^\infty\|\Box V(s)\|_{L^2_{x,y}}ds\\
&\ls\ve,
\end{split}
\end{equation*}
which means $(u_{(0)}^\infty,u_{(1)}^\infty)\in\dot H^1(\R^2\times\T)\times L^2(\R^2\times\T)$.
On the other hand, $u^\infty(t)=\Lambda^{-1}\mathrm{Im}e^{it\Lambda}\cU_{(0)}^\infty$ is the solution
of $\Box u^\infty=0$ with the initial data $(u_{(0)}^\infty,u_{(1)}^\infty)$ at $t=0$.
Thus, \eqref{scater:pf5} and \eqref{scater:pf6} imply
\begin{equation}\label{scater:pf7}
\begin{split}
E_0[u^\infty-V](t)&\ls\|e^{it\Lambda}\cU_{(0)}^\infty-\cV(t)\|_{L^2_{x,y}}
\ls\int_t^\infty\|e^{i(t-s)\Lambda}\Box V(s)\|_{L^2_{x,y}}ds\\
&\ls\int_t^\infty\|\Box V(s)\|_{L^2_{x,y}}ds,
\end{split}
\end{equation}
where the fact of $V=\Lambda^{-1}\mathrm{Im}\cV$ is used.

Next, we estimate $\|\Box V\|_{L^2_{x,y}}$.
Applying \eqref{BA2}, \eqref{BA5}, \eqref{pw0:good:low}, \eqref{scater:pf4} to \eqref{energy:V:pf3}, \eqref{pw0:improv4}, \eqref{pw0:improv5} yields
\begin{equation}\label{scater:pf8}
\|\cC_1^i+\cC_2^i\|_{L^2_{x,y}}\ls\ve^3\w{t}^{-3/2}.
\end{equation}
On the other hand, thanks to \eqref{proj:property}, \eqref{Poincare:ineq}, \eqref{BA2}, \eqref{BA5} and \eqref{scater:pf4}, we have
\begin{equation*}
\|G^i\|_{L^2_{x,y}}\ls\ve^4\w{t}^{-3/2}.
\end{equation*}
This, together with \eqref{scater:pf7} and \eqref{scater:pf8}, gives
\begin{equation}\label{scater:pf9}
E_0[u^\infty-V](t)\ls\int_t^\infty\ve^3\w{s}^{-3/2}ds\ls\ve^3\w{t}^{-1/2}.
\end{equation}
Therefore, \eqref{thm1:scater} can be achieved by \eqref{scater:pf1} and \eqref{scater:pf9}.
\end{proof}

\appendix
\setcounter{equation}{1}

\section{Derivations of \eqref{QWE2} and \eqref{QWE3-1}}
\begin{proof}[Proofs of \eqref{QWE2} and \eqref{QWE3-1}]
At first, it holds that
\begin{equation}\label{App:A1}
\begin{split}
\Box(fg)&=g\Box f+f\Box g+2Q_0(f,g),\\
\Box(fgh)&=gh\Box f+fh\Box g+fg\Box h+2fQ_0(g,h)+2gQ_0(f,h)+2hQ_0(f,g).
\end{split}
\end{equation}
Then it follows from \eqref{App:A1} that
\begin{equation*}
\Box(\p^au^j\p^bu^k)=2Q_0(\p^au^j,\p^bu^k)+\p^a\Box u^j\p^bu^k+\p^au^j\p^b\Box u^k.
\end{equation*}
This, together with \eqref{QWE} and direct computation,  yields
\begin{equation*}
\begin{split}
&\Box\tilde V^i=\Box u^i-\frac12\sum_{j,k=1}^m\sum_{|a|+|b|\le1}C^{ab}_{ijk}(2Q_0(\p^au^j,\p^bu^k)+\p^a\Box u^j\p^bu^k+\p^au^j\p^b\Box u^k)\\
&=\sum_{j,k,l=1}^mC_{ijkl}Q_0(u^j,u^k)u^l+\sum_{j,k,l=1}^m\sum_{\alpha,\beta,\mu,\nu=0}^3Q_{ijkl}^{\alpha\beta\mu\nu}\p^2_{\alpha\beta}u^j
\p_{\mu}u^k\p_{\nu}u^l\\
&-\frac12\sum_{|a|+|b|\le1}\sum_{j,k=1}^mC^{ab}_{ijk}\Big\{C_{jj'k'l'}\p^a(Q_0(u^{j'},u^{k'})u^{l'})\p^bu^k+C_{kj'k'l'}\p^au^j
\p^b(Q_0(u^{j'},u^{k'})u^{l'})\\
&\quad+\sum_{|a'|+|b'|\le1}[C^{a'b'}_{jj'k'}\p^aQ_0(\p^{a'}u^{j'},\p^{b'}u^{k'})\p^bu^k
+C^{a'b'}_{kj'k'}\p^au^j\p^bQ_0(\p^{a'}u^{j'},\p^{b'}u^{k'})]\\
&\quad+Q_{jj'k'l'}^{\alpha\beta\mu\nu}\p^a(\p^2_{\alpha\beta}u^{j'}\p_{\mu}u^{k'}\p_{\nu}u^{l'})\p^bu^k
+Q_{kj'k'l'}^{\alpha\beta\mu\nu}\p^au^j\p^b(\p^2_{\alpha\beta}u^{j'}\p_{\mu}u^{k'}\p_{\nu}u^{l'})\Big\}.
\end{split}
\end{equation*}
Therefore, we have
\begin{equation}\label{App:A3}
\begin{split}
&\Box\tilde V^i=\sum_{j,k,l=1}^m\sum_{|a|,|b|\le2}C^{ab}_{ijkl}Q_0(\p^au^j,\p^bu^k)u^l
+\sum_{j,k,l=1}^m\sum_{\alpha,\beta,\mu,\nu=0}^3\tilde Q_{ijkl}^{\alpha\beta\mu\nu}\p^2_{\alpha\beta}u^j\p_{\mu}u^k\p_{\nu}u^l\\
&\quad+\sum_{j,k,l=1}^m\sum_{\alpha,\beta,\mu=0}^3\tilde Q_{ijkl}^{\alpha\beta\mu}\p_{\alpha}u^j\p_{\beta}u^k\p_{\mu}u^l
+G_4^i(\p^{\le3}u),\\
&G_4^i(\p^{\le3}u):=G_{4,0}^i(\p u,\p^2u,\p^3u)+\sum_{j=1}^mu^jG_{4,1}^{ij}(\p u,\p^2u,\p^3u)
+\sum_{j,k=1}^mu^ju^kG_{4,2}^{ijk}(\p u,\p^2u,\p^3u),
\end{split}
\end{equation}
where $G_{4,\ell}^{\ast\ast\ast}(\p u,\p^2u,\p^3u)=O((|\p u|+|\p^2u|+|\p^3u|)^{4-\ell})$
for $\ell=0,1,2$,  and $\tilde Q_{ijkl}^{\alpha\beta\mu\nu}$, $\tilde Q_{ijkl}^{\alpha\beta\mu}$ are
some suitable constants which satisfy such partial null conditions
\begin{equation}\label{App:A4}
\sum_{\alpha,\beta,\mu,\nu=0}^2\tilde Q_{ijkl}^{\alpha\beta\mu\nu}\xi_\alpha\xi_\beta\xi_\mu\xi_\nu\equiv0,\quad
\sum_{\alpha,\beta,\mu=0}^2\tilde Q_{ijkl}^{\alpha\beta\mu}\xi_\alpha\xi_\beta\xi_\mu\equiv0,\quad\forall(\xi_0,\xi_1,\xi_2)\in\{\pm1\}\times\SS.
\end{equation}
Thus, the proof of \eqref{QWE2} is finished.

In addition, according to the definition \eqref{normal:form} and \eqref{App:A1}, one can find that
\begin{equation*}
\begin{split}
&\Box V^i=\Box\tilde V^i-\frac12\sum_{j,k,l=1}^m\sum_{|a|,|b|\le2}C^{ab}_{ijkl}\Big\{2Q_0(P_{\neq0}\p^au^j,P_{\neq0}\p^bu^k)P_{=0}u^l\\
&+2Q_0(P_{\neq0}\p^au^j,P_{=0}u^l)P_{\neq0}\p^bu^k+2Q_0(P_{=0}u^l,P_{\neq0}\p^bu^k)P_{\neq0}\p^au^j\\
&+P_{\neq0}\p^a\Box u^jP_{\neq0}\p^bu^kP_{=0}u^l+P_{\neq0}\p^au^jP_{\neq0}\p^b\Box u^kP_{=0}u^l+P_{\neq0}\p^au^jP_{\neq0}\p^bu^kP_{=0}\Box u^l\Big\}.
\end{split}
\end{equation*}
This, together with \eqref{App:A3} and Lemma \ref{proj:property}, leads to
\begin{equation}\label{App:A5}
\begin{split}
&\Box V^i=\sum_{j,k,l=1}^m\sum_{\alpha,\beta,\mu,\nu=0}^3\tilde Q_{ijkl}^{\alpha\beta\mu\nu}\p^2_{\alpha\beta}u^j\p_{\mu}u^k\p_{\nu}u^l
+\sum_{j,k,l=1}^m\sum_{\alpha,\beta,\mu=0}^3\tilde Q_{ijkl}^{\alpha\beta\mu}\p_{\alpha}u^j\p_{\beta}u^k\p_{\mu}u^l\\
&+\sum_{j,k,l=1}^m\sum_{|a|,|b|\le2}C^{ab}_{ijkl}\Big\{Q_0(\p^au^j,\p^bu^k)P_{\neq0}u^l\\
&+P_{=0}u^l[Q_0(P_{=0}\p^au^j,P_{=0}\p^bu^k)+Q_0(P_{\neq0}\p^au^j,P_{=0}\p^bu^k)+Q_0(P_{=0}\p^au^j,P_{\neq0}\p^bu^k)]\\
&-Q_0(P_{\neq0}\p^au^j,P_{=0}u^l)P_{\neq0}\p^bu^k-Q_0(P_{=0}u^l,P_{\neq0}\p^bu^k)P_{\neq0}\p^au^j\Big\}+F_4^i(\p^{\le3}u)\\
&=:\cC_1^i+\cC_2^i+G^i,\\
&G^i:=G_4^i(\p^{\le3}u)+\tilde G^i(\p^{\le4}u),\\
&\tilde G^i(\p^{\le4}u):=-\frac12\sum_{j,k,l=1}^m\sum_{|a|,|b|\le2}C^{ab}_{ijkl}\Big\{P_{\neq0}\p^a\Box u^jP_{\neq0}\p^bu^kP_{=0}u^l\\
&\qquad+P_{\neq0}\p^au^jP_{\neq0}\p^b\Box u^kP_{=0}u^l+P_{\neq0}\p^au^jP_{\neq0}\p^bu^kP_{=0}\Box u^l\Big\},
\end{split}
\end{equation}
where the expression of $G_4^i(\p^{\le3}u)$ is similar to that of $\tilde G^i(\p^{\le4}u)$ but
$G_4^i(\p^{\le3}u)$ contains at most third order derivatives of $u$.
Consequently, \eqref{QWE3-1} is is achieved.
\end{proof}

\vskip 0.2 true cm

{\bf \color{blue}{Conflict of Interest Statement:}}

\vskip 0.1 true cm

{\bf The authors declare that there is no conflict of interest in relation to this article.}

\vskip 0.2 true cm
{\bf \color{blue}{Data availability statement:}}

\vskip 0.1 true cm

{\bf  Data sharing is not applicable to this article as no data sets are generated
during the current study.}

\end{document}